\documentclass[11pt]{amsart}

\usepackage[a4paper,margin=1in]{geometry}
\usepackage[T1]{fontenc}
\usepackage{lmodern}
\usepackage{microtype}
\usepackage{amsmath,amssymb,amsthm,mathtools}
\usepackage{booktabs,array}
\usepackage[shortlabels]{enumitem}
\usepackage[hidelinks]{hyperref}

\newtheorem{theorem}{Theorem}[section]
\newtheorem{proposition}[theorem]{Proposition}
\newtheorem{lemma}[theorem]{Lemma}
\newtheorem{corollary}[theorem]{Corollary}
\theoremstyle{definition}
\newtheorem{definition}[theorem]{Definition}

\theoremstyle{remark}

\newcommand{\CH}{\operatorname{CH}}
\newcommand{\Ch}{\operatorname{Ch}}
\newcommand{\CHW}{\widetilde{\operatorname{CH}}}
\newcommand{\KM}{\mathbf K^{\mathrm M}}
\newcommand{\KMW}{\mathbf K^{\mathrm{MW}}}
\newcommand{\I}{\mathbf I}
\newcommand{\W}{\operatorname W}
\newcommand{\GW}{\operatorname{GW}}
\newcommand{\Pic}{\operatorname{Pic}}
\newcommand{\Hom}{\operatorname{Hom}}
\newcommand{\im}{\operatorname{im}}
\newcommand{\Spec}{\operatorname{Spec}}
\newcommand{\F}{\mathbb F}
\newcommand{\Z}{\mathbb Z}
\newcommand{\PP}{\mathbb P}
\newcommand{\RZ}{\mathbb R\mathcal Z}
\newcommand{\sa}{\mathrm{sa}}
\newcommand{\RS}{\mathrm{RS}}
\newcommand{\wtPic}{\overline{\Pic}}

\title{A Fan Algorithm for Chow-Witt Rings of Smooth Projective Toric Varieties}
\author{Haoyang Liu, Tianle Liu and Guorui Xu}
\date{}
\address{University of California, Santa Barbara}
\email{haoyangliu@ucsb.edu}
\address{University of Southern California}
\email{tianleli@usc.edu}
\address{Fudan University}
\email{xugr@fudan.edu.cn}
\subjclass[2020]{Primary 14C17, 14M25; Secondary 14F43, 55N25, 57S12}
\keywords{Chow--Witt ring, toric variety, real cycle class map, moment-angle complex,
local coefficients, Hirzebruch surface}

\begin{document}

\begin{abstract}
Let $R$ be a real closed field and let $X_\Sigma$ be a smooth projective split toric
variety.  For an explicitly chosen basis rigidification $\mathfrak r$ of its Picard
grading, we prove that the pair $(\Sigma,\mathfrak r)$ determines the resulting total
Chow--Witt ring and give a terminating finite algorithm for all groups, products,
twists, and forgetful maps.
The ring is identified with an explicit fibre product of the
Picard-graded diagonal $\I$-cohomology ring and the integral inverse images of twisted
Bockstein kernels over the mod-$2$ Chow ring.  Using real cycle classes, we identify the
first factor with the cohomology of the real toric variety with all sign local systems.
We construct the mod-$2$ cycle map on invariant divisors as explicit deck-transition
cocycles and obtain finite signed boundary and product matrices directly from the fan.

\end{abstract}

\maketitle
\tableofcontents

\section{Introduction}

Chow--Witt groups refine ordinary Chow groups by retaining quadratic and orientation
data.  For a smooth scheme $X$ over a field of characteristic different from $2$, the
natural multiplicative object is not the untwisted group alone but the collection
\[
 \bigl\{\CHW^p(X,L)\bigr\}_{p,L},\qquad
 \CHW^p(X,L)\times\CHW^q(X,M)
 \longrightarrow\CHW^{p+q}(X,L\otimes M).
\]
An ordinary ring formed by summing only over $\Pic(X)/2$ is not canonical: it requires
representatives and coherent square-cancellation data.  We keep this rigidification
visible throughout.  For the toric varieties considered here, $\Pic(X_\Sigma)$ is free,
and an ordered Picard basis gives the required data explicitly.
The twist is essential in toric geometry.  We write $(L,a)$ for the graded line bundle
with underlying line bundle $L$ and degree $a$, and use degree-zero graded line bundles
throughout the rigidified total ring.  Thus a line bundle $L$ means $(L,0)$ unless
another degree is displayed.  This convention is distinct from
the natural Euler-class grading: the Euler class of a line bundle $\mathcal O(D)$ has
graded twist $(\mathcal O(-D),1)$.  Its underlying line-bundle class agrees with that of
$\mathcal O(D)$ in $\Pic(X)/2$, but neither the dual twist nor its graded degree may be
discarded before the convention has been fixed.

The basic reconstruction of Chow--Witt theory uses ordinary Chow groups and
$\I$-cohomology.  It is tempting to regard this as an automatic fibre-product formula
for every smooth scheme and then to replace the Bockstein kernel by the kernel of a
motivic Steenrod square.  Neither step is formal.  Hornbostel and Wendt prove that the
canonical map to the fibre product is always surjective, and that it is injective under
an additional hypothesis, for example the absence of nonzero $2$-torsion in the Chow
groups \cite[Proposition~2.11]{HornbostelWendt}.  Moreover, the identity
\[
 \rho\beta_L=\operatorname{Sq}^2_L
 =\operatorname{Sq}^2+c_1(L)(-)
\]
only gives $\ker\beta_L\subseteq\ker\operatorname{Sq}^2_L$ unless reduction is
injective on $\im\beta_L$ \cite[Theorem~3.4.1]{AsokFasel}.

For smooth projective split toric varieties both issues can be handled cleanly.
Their Chow groups are free abelian, so the fibre-product theorem applies.  Over a real
closed field, the real cycle class theorem of Hornbostel--Wendt--Xie--Zibrowius
identifies diagonal $\I$-cohomology, with all line-bundle twists and products, with
semialgebraic cohomology of the real locus with the corresponding sign local systems
\cite[Corollary~4.4 and Theorem~5.7]{RealCycle}.  The remaining task is to turn that
topological description into a finite construction from the fan without hiding any
multiplicative or twisting input.

Let $K$ be the simplicial complex of cones of a smooth projective fan with $r$ rays,
and let
\[
 \lambda_2:(\Z/2)^r\longrightarrow(\Z/2)^n
\]
be the mod-$2$ characteristic map.  Put $K_0=\ker\lambda_2$.  The real moment-angle
complex $\RZ_K=(D^1,S^0)^K$ carries the coordinate action of $(\Z/2)^r$.  Smoothness
implies that $K_0$ acts freely: the stabilizer of a point is supported on a face
$\sigma\in K$, and the primitive ray vectors indexed by $\sigma$ are linearly
independent modulo $2$.  Furthermore,
\[
 \Hom(K_0,\{\pm1\})\cong\Pic(X_\Sigma)/2.
\]
Thus every algebraic line-bundle twist is represented by a sign character of the
cover $\RZ_K\to\RZ_K/K_0$.

After a $K_0$-invariant subdivision, normalized simplicial chains on $\RZ_K$ are
finite free $\Z[K_0]$-modules.  Naturality of the Alexander--Whitney diagonal makes it
$K_0$-equivariant.  Hence
\[
 C^*_\chi(\Sigma)=
 \Hom_{\Z[K_0]}(C_*(\RZ_K),\Z_\chi)
\]
computes cohomology with the local system $\Z_\chi$, and the equivariant diagonal
gives every pairing
\[
 C^*_\chi(\Sigma)\otimes C^*_\psi(\Sigma)
 \longrightarrow C^*_{\chi\psi}(\Sigma).
\]
The same finite complexes compute the coefficient reductions and the twisted integral
Bocksteins.  This leads to the main result.

The construction is effective at every stage.  From the rays and cones and the chosen
rigidification it produces
finite integer matrices for the Picard-graded $\I$-cohomology ring, the divisor cycle
map, coefficient reductions, and Bocksteins.  A supplementary implementation checks
the fan-to-cochain and divisor-cycle layers; no proof depends on executing it.

\begin{theorem}
Let $R$ be real closed and let $X_\Sigma/R$ be a smooth projective split toric
variety, equipped with an ordered Picard-basis rigidification $\mathfrak r$.  The
rigidified total ring $\CHW^\bullet_{\mathfrak r}(X_\Sigma,\Pic/2)$, including all
products and forgetful maps, is computable by a terminating finite algorithm from the
rays and cones of $\Sigma$ and the displayed rigidification.  Explicitly, the ring is
the fibre product of the signed moment-angle
cohomology ring and the integral inverse images of the kernels of the corresponding
twisted Bocksteins over $\Ch^*(X_\Sigma)[\Pic/2]$.
\end{theorem}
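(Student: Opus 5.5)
The plan is to assemble the theorem from three inputs, each stated before it, and then check that every ingredient is a finite integer-matrix computation.

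\emph{Step 1: the fibre product.} For a smooth projective split toric variety, $\CH^*(X_\Sigma)$ is free abelian. The Chow ring is presented by Jurkiewicz--Danilov as the Stanley--Reisner ring modulo linear relations, with a monomial basis read off from a shelling of $K$. So \cite[Proposition~2.11]{HornbostelWendt} applies in every twist $L$. It identifies $\CHW^p(X_\Sigma,L)$ with the fibre product of $H^p(X_\Sigma,\I^p(L))$ and $\ker\bigl(\partial_L\colon\CH^p\to H^{p+1}(X_\Sigma,\I^{p+1}(L))\bigr)$ over $\Ch^p(X_\Sigma)$. I will rewrite the kernel condition as ``the mod-$2$ reduction lies in $\ker\beta_L$''. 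Here $\beta_L\colon\Ch^p\to H^{p+1}(\I^{p+1}(L))$ is the twisted Bockstein, and we use that $\partial_L$ factors through reduction. This produces the ``integral inverse image of the kernel of the twisted Bockstein'' in the statement. I will also check that products and the forgetful maps $\CHW\to\CH$ and $\CHW\to H^*(\I^*)$ are the componentwise ones. This holds because both projections are ring maps compatible with the twisted pairings. The fixed Picard basis $\mathfrak r$ supplies the coherent isomorphisms $L\otimes L\cong\mathcal O$ needed to sum over $\Pic/2$. Compatibility with these isomorphisms has to be verified once on generators.

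\emph{Step 2: the $\I$-cohomology factor.} Next I apply the real cycle class theorem \cite[Corollary~4.4 and Theorem~5.7]{RealCycle}. It identifies diagonal $\I$-cohomology $\bigoplus_p H^p(X,\I^p(L))$, multiplicatively and in all twists, with $H^p_{\sa}(X(R),\Z_L)$. Then $X_\Sigma(R)$ is identified with $\RZ_K/K_0$ via the standard quotient construction. Under this identification, $\Pic/2\cong\Hom(K_0,\{\pm1\})$ matches the local system $\Z_L$ with $\Z_\chi$. Both sides are classified by $H^1(-;\Z/2)$ via $w_1$ of $L(R)$, and it suffices to check this on the invariant divisors $D_i$. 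After a $K_0$-invariant subdivision, the complexes $C^*_\chi(\Sigma)$ are finite free. The equivariant Alexander--Whitney diagonal gives finite product matrices, so cohomology and cup products come from Smith normal form. Transfer from semialgebraic cohomology over $R$ to singular cohomology over $\mathbb R$ is by Tarski--Seidenberg and the triangulation invariance of semialgebraic cohomology.

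\emph{Step 3: the glue maps, which I expect to be the main obstacle.} I need explicit cocycles for the maps $\Ch^*(X_\Sigma)\to H^*(X(R);\Z/2)$ (reduction of the real cycle class) and for $\beta_L$ in cochain form. The plan is to compute these on generators and propagate multiplicatively. Since $\Ch^*$ is generated by the divisor classes $[D_i]$, it suffices to produce, for each ray $i$, a cocycle in $C^1(\RZ_K/K_0;\Z/2)$ representing the real cycle class of $D_i$. I will take the deck-transition cocycle of the coordinate sign $e_i$. Concretely, this is the $1$-cochain recording whether an edge crosses the hyperplane $x_i=0$ in $\RZ_K$. I will verify that it is Poincaré dual to $D_i(R)$ by comparing with the Borsuk--Ulam style characterization of $w_1(\mathcal O(D_i)(R))$. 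Products of these cocycles via the diagonal then give the whole map $\Ch^*\to H^*(\Z/2)$. The subtle point is that this map is generally not injective, so it does not see all of $\ker\beta_L$ directly. I will handle this by using the fibre-product description only through the composite $\ker(H^*(\Z_\chi)\to H^*(\Z/2))$ and the Bockstein. The twisted Bockstein is computed on the finite complex by lifting a mod-$2$ cocycle to $C^*_\chi$, applying $\delta$, and dividing by $2$. The remaining delicacy is to confirm that the Chow-side $\beta_L$ corresponds to the topological twisted Bockstein under the real cycle class map, including the Euler-class twist convention $(\mathcal O(-D),1)$ from the introduction. For this I would use compatibility of the real cycle class map with the Bockstein triangles $\I^{p+1}\to\I^p\to\bar{\I}^p$, as in \cite{RealCycle}.

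\emph{Step 4: termination.} Once these maps are in hand, every object is a kernel, image, or pullback of finitely generated abelian groups given by explicit integer matrices. Each is therefore computable by Smith normal form, which terminates and yields the groups, products, twists, and forgetful maps. Together these establish the theorem.
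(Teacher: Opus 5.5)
\paragraph{Gap: the mod-$2$ cycle class map.}
Steps 1, 2 and 4 follow the paper's route: the Hornbostel--Wendt fibre product, the all-twist real cycle class isomorphism, the free $K_0$-equivariant signed complexes with Alexander--Whitney diagonal, and Smith normal form. Step 3, however, has a genuine gap.

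You assert that the mod-$2$ cycle class map $\Ch^*(X_\Sigma)\to H^*_\sa(X_\Sigma(R);\F_2)$ is ``generally not injective'' and try to work around this. For the varieties in the statement the assertion is false, and its negation is exactly what the theorem needs:
\begin{itemize}
\item $X_\Sigma$ has a filtrable affine paving. The paper gets it from Bia\l ynicki--Birula in Proposition~\ref{prop:cellular}; your shelling also gives one via Fulton's shelling decomposition.
\item For such cellular varieties the mod-$2$ real cycle class map is a ring isomorphism \cite[Proposition~5.3]{RealCycle}. Classically, the Davis--Januszkiewicz presentation of $H^*(X_\Sigma(\mathbb R);\F_2)$ is \eqref{eq:DJ} reduced mod $2$.
\item Proposition~\ref{prop:explicitcl2} shows that your deck-transition cocycles $\omega_i$ realize this isomorphism.
\end{itemize}

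The fact is indispensable for the left structure map of the fibre product. The statement takes the fibre product over $\Ch^*(X_\Sigma)[\Pic/2]$, but topology only supplies $r_\chi\colon H^p(C^*_\chi(\Sigma))\to H^p(\overline C^*(\Sigma))$. The paper recovers $\rho_L$ as $a_\epsilon=(\operatorname{cl}_2)^{-1}\circ r_\chi$ in \eqref{eq:mainstructuremaps}. This is well defined precisely because $\operatorname{cl}_2$ is injective; the image of $r_\chi$ lies in $\operatorname{im}\operatorname{cl}_2$ automatically, since $\operatorname{cl}_R$ is onto.

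If $\operatorname{cl}_2$ had a kernel, no map $A_{\Sigma,\mathfrak r}\to\Ch^*(X_\Sigma)$ could be read off from the signed complexes. Forming the fibre product over $H^*(\overline C^*(\Sigma))$ instead would then admit spurious pairs $(0,z)$ with $0\neq\bar z\in\ker\operatorname{cl}_2$. Your fallback ``through $\ker(H^*(\Z_\chi)\to H^*(\Z/2))$ and the Bockstein'' does not produce the required map.

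\paragraph{The Bockstein kernel needs no injectivity of $\operatorname{cl}_2$.}
The step where you locate the difficulty is actually fine without it. The square $\operatorname{cl}_R\circ\beta_L=\beta_\chi\circ\operatorname{cl}_2$ commutes \cite[Proposition~4.14]{RealCycle}, and $\operatorname{cl}_R$ is injective on $H^{p+1}(X_\Sigma,\I^{p+1}(L))$ by Theorem~\ref{thm:realcycle}. Hence $\ker\beta_L=\ker(\beta_\chi\circ\operatorname{cl}_2)$, which is exactly \eqref{eq:topK}.

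So the repair is:
\begin{itemize}
\item cite that $\operatorname{cl}_2$ is an isomorphism for cellular varieties;
\item state the affine paving explicitly, since it is the hypothesis of both the integral and the mod-$2$ real cycle comparisons, not merely a route to free Chow groups.
\end{itemize}

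\paragraph{Minor point.}
The rigidification does not supply isomorphisms $L\otimes L\cong\mathcal O$. It supplies $\theta_{\epsilon,\delta}\colon L_\epsilon\otimes L_\delta\cong L_{\epsilon+\delta}\otimes Q_{\epsilon,\delta}^{\otimes2}$ followed by square cancellation.
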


The exact use of the Bockstein is a feature of the theorem.  When reduction is known
to be injective on the Bockstein image, the right factor may be computed as a twisted
Steenrod kernel; without this verification we do not make that replacement.

We execute the construction for the Hirzebruch surface $\mathbb F_a$.  The real locus
is a torus when $a$ is even and a Klein bottle when $a$ is odd.  We write down the
four signed cellular complexes, their reduction maps, and their complete
Picard-graded multiplication.  This gives a full rigidified fibre-product description for every
$a\geq0$, not merely additive untwisted groups.

The real-closed hypothesis is not necessary in every individual case: the cellular
real-cycle comparison extends to the broader fields described in
\cite[Remark~5.10 and Lemma~5.11]{RealCycle}.  The projective-line calculation below
instead proves the precise uniform boundary needed here: without retaining additional
arithmetic input, the real-closed hypothesis cannot simply be replaced by an arbitrary
perfect ordered field.  For an ordered subfield $k\subset R$, Fasel's projective bundle
calculation gives
\[
 H^1(\PP^1_k,\I^1)\cong\W(k),\qquad
 H^1(\PP^1_k,\I^1(\mathcal O(1)))\cong\W(k)/I(k)\cong\Z/2.
\]
The corresponding groups on $S^1$ are $\Z$ and $\Z/2$.  The twisted sector happens
to agree, but the untwisted comparison is the chosen signature
$\W(k)\to\Z$ and generally has a nonzero kernel.  No field-independent signed
complex can recover that arithmetic information.

The paper is organized as follows.  Section~\ref{sec:fibre} records the exact
fibre-product statement and separates Bockstein and Steenrod kernels.
Section~\ref{sec:toric} establishes the toric algebraic input.  Section~\ref{sec:signed}
constructs signed cochains and their products.  Section~\ref{sec:realcycle} proves the
main theorem over real closed fields.  Section~\ref{sec:hirzebruch} contains the
complete Hirzebruch-surface computation.  Section~\ref{sec:boundary} gives the
projective-line obstruction and states the remaining arithmetic boundary.

\subsection*{Scope.}
Theorem~\ref{thm:main} is stated for smooth projective fans for three independent
reasons: the Stanley--Reisner Chow presentation is used in its classical complete
range; projectivity supplies a filtrable affine-cell decomposition; and the real locus
has the compact small-cover/moment-angle model used in Section~\ref{sec:signed}.
The theorem extends verbatim to a smooth complete split toric variety whenever these
three inputs are separately supplied.  For a noncomplete fan one must replace the
compact real moment-angle quotient by an appropriate noncompact model and prove the
required cellular and Chow-theoretic statements; we make no such assertion here.

Likewise, shellability gives useful additive information.  Projective toric fans are
shellable by Bruggesser--Mani, and Liu--Peng's preprint then provides an additive
MW-motivic normal form \cite{LiuPeng}.  A promotion of that normal form to a total
Chow--Witt ring would require a multiplicative diagonal, all Picard-twisted
coefficient objects, and their reduction maps.  The signed construction supplies
these data after real realization over a real closed field, not over a general field.

\subsection*{Relation to previous work.}
The algebraic contribution has three parts.  First, Theorem~\ref{thm:main} gives a
single rigidified Picard-graded fibre-product presentation for all twists and proves that every
term and product is computable from the fan together with the rigidification.  Second,
Proposition~\ref{prop:explicitcl2} makes the mod-$2$ cycle map constructive in the
toric divisor presentation, so that exact Bockstein kernels, rather than only Steenrod
upper bounds, can be computed.  Third, Section~\ref{sec:hirzebruch} determines every
twist sector, product, and forgetful map for the complete family $\mathbb F_a$.

Jiang studied Chow--Witt rings of toric surfaces in his doctoral thesis
\cite{JiangThesis}.  The treatment here is independent and self-contained; it addresses
all classes in $\Pic/2$ simultaneously, retains exact Bocksteins, and derives the
surface calculation from a general construction for smooth projective fans of arbitrary
dimension.

Franz gives an equivariant dga computing the cohomology ring of a real toric space over
arbitrary constant coefficient rings \cite{Franz}.  Our signed complexes supply the
simultaneous rank-one local-system extension required by the Picard grading, together
with the reductions and connecting maps needed by the Chow--Witt square.  Fan gives an
additive integral formula and a ring formula modulo the ideal of order-$2$ elements
\cite{Fan}; the present fibre product retains the $2$-primary data.  Liu and Peng
construct cellular $\mathbb A^1$-homology complexes and an additive normal form for
pure shellable fans \cite{LiuPeng}.  Their construction is complementary to the
all-twist multiplicative diagonal used here.  The supplementary certificate records
the finite fan-to-cochain construction and independently checks its defining identities.

\subsection*{Acknowledgment}
The first author would like to thank Fangzhou Jin and Keyao Peng for helpful discussions.

\section{The fibre-product reconstruction}
\label{sec:fibre}

Throughout this section, $k$ is a perfect field of characteristic different from $2$
and $X$ is a smooth $k$-scheme.  We write $\Ch^p(X)=\CH^p(X)/2$.

\subsection{Twisted Chow--Witt groups}

For $x\in X^{(p)}$, put
$\omega_x=\det(\mathfrak m_x/\mathfrak m_x^2)^\vee$.  The degree-$p$ term of the
Rost--Schmid complex with Milnor--Witt coefficients and twist $L$ is
\[
 C^p_{\RS}(X,\KMW_q,L)=
 \bigoplus_{x\in X^{(p)}}
 K^{\mathrm{MW}}_{q-p}(k(x),\omega_x\otimes L_x).
\]
Its differential is the sum of the twisted residue maps, and
\[
 \CHW^p(X,L)=H^p(C^\bullet_{\RS}(X,\KMW_p,L)).
\]
We use Morel's and Fasel's conventions for twists and residues
\cite[Chapter~5]{Morel}\cite{FaselLectures}.  A twist written simply as $L$ is, by
definition in this paper, the graded line bundle $(L,0)$.  Consequently tensor product
of twists gives
\[
 \CHW^p(X,L)\times\CHW^q(X,M)
 \longrightarrow\CHW^{p+q}(X,L\otimes M).
\]
Changing a twist by a square yields a change-of-twist isomorphism only after the square
root and the relevant line-bundle isomorphism have been specified.  Consequently a
direct sum indexed by $\wtPic(X):=\Pic(X)/2$ is not, without further data, a canonical
ring.  The degree of a graded line bundle is an additional genuine part of the
commutativity convention; see
\cite[Remark~2.3 and Section~2.3]{WendtGrassmannians}.  Our degree-zero convention is
fixed once and for all and is part of every rigidification below.

\begin{definition}
\label{def:rigidification}
Assume that $\Pic(X)$ is free of rank $\varrho$.  A \emph{basis rigidification}
$\mathfrak r$ consists of an ordered basis $B_1,\ldots,B_\varrho$ of $\Pic(X)$ and
line bundles $\mathcal B_i$ representing the $B_i$, each regarded as the graded line
bundle $(\mathcal B_i,0)$.  For
$\epsilon=(\epsilon_i)\in(\Z/2)^\varrho$ put
\[
 L_\epsilon=\bigotimes_{i=1}^{\varrho}\mathcal B_i^{\otimes\epsilon_i},
 \qquad
 Q_{\epsilon,\delta}=
 \bigotimes_{i=1}^{\varrho}\mathcal B_i^{\otimes\epsilon_i\delta_i},
\]
where all tensor products are taken in the displayed order.  The standard
associativity and symmetry constraints give a coherent isomorphism
\[
 \theta_{\epsilon,\delta}:L_\epsilon\otimes L_\delta
 \xrightarrow{\ \cong\ }
 L_{\epsilon+\delta}\otimes Q_{\epsilon,\delta}^{\otimes2}.
\]
If $s_Q$ denotes square cancellation
$\CHW^m(X,N\otimes Q^{\otimes2})\xrightarrow{\cong}\CHW^m(X,N)$, define
\[
 \CHW^\bullet_{\mathfrak r}(X,\wtPic)=
 \bigoplus_{p,\epsilon}\CHW^p(X,L_\epsilon),
 \qquad
 x\star_{\mathfrak r}y=
 s_{Q_{\epsilon,\delta}}
 \bigl((\theta_{\epsilon,\delta})_*(xy)\bigr).
\]
\end{definition}

For each $i$, regarding $\epsilon_i,\delta_i,\gamma_i$ as the integers $0$ or $1$, one
has
\[
 \epsilon_i\delta_i+ (\epsilon_i+\delta_i\bmod 2)\gamma_i
 =\delta_i\gamma_i+\epsilon_i(\delta_i+\gamma_i\bmod 2).
\]
Consequently the two parenthesizations of a triple product cancel canonically the same
ordered square factor.  Mac Lane coherence for the tensor product, together with the
functoriality and multiplicativity of change of twist by a square, then proves that
$\star_{\mathfrak r}$ is associative and unital.  If
$x\in\CHW^p(X,L_\epsilon)$ and $y\in\CHW^q(X,L_\delta)$ are homogeneous, then, after
using the symmetry isomorphism to identify the two target twists,
\begin{equation}
 x\star_{\mathfrak r}y
 =\langle-1\rangle^{pq}y\star_{\mathfrak r}x.
 \label{eq:rigidcommutativity}
\end{equation}
There is no further correction from the twist labels because every chosen graded line
bundle has degree zero.  Thus
$\CHW^\bullet_{\mathfrak r}(X,\wtPic)$ is a
$\Z\oplus(\Z/2)^\varrho$-graded ring.  Its component in degree $(0,0)$ is
$\CHW^0(X,\mathcal O_X)$, and pullback along $X\to\Spec k$ makes it a
$\GW(k)$-algebra; it need not equal $\GW(k)$ for an arbitrary smooth $X$.  A different
basis rigidification generally gives no canonical identification with this ring.
Without freeness or a chosen rigidification, all statements below are to be read sector
by sector, or as statements graded by the Picard groupoid rather than as statements
about an ordinary total ring.

\subsection{Bocksteins and the exact right factor}

The cartesian square of homotopy modules relating $\KMW_*$, $\KM_*$, and $\I^*$
gives, after twisting by $L$, the B\"ar sequence
\begin{equation}
 \cdots\longrightarrow H^p(X,\I^{p+1}(L))
 \longrightarrow H^p(X,\I^p(L))
 \xrightarrow{\rho_L}\Ch^p(X)
 \xrightarrow{\beta_L}H^{p+1}(X,\I^{p+1}(L))
 \longrightarrow\cdots .
 \label{eq:bar}
\end{equation}
Define
\begin{equation}
 K_L^p(X)=\ker\!\left(
 \CH^p(X)\xrightarrow{\bmod 2}\Ch^p(X)
 \xrightarrow{\beta_L}H^{p+1}(X,\I^{p+1}(L))
 \right).
 \label{eq:exactK}
\end{equation}
If $\Pic(X)$ is free and $\mathfrak r$ is a basis rigidification, set
\[
 A_{\mathfrak r}(X)=\bigoplus_{p,\epsilon}H^p(X,\I^p(L_\epsilon)),\quad
 C_{\mathfrak r}(X)=\Ch^\bullet(X)[(\Z/2)^\varrho],\quad
 K_{\mathfrak r}(X)=\bigoplus_{p,\epsilon}K_{L_\epsilon}^p(X).
\]
The product on $A_{\mathfrak r}(X)$ uses the same maps
$\theta_{\epsilon,\delta}$ and square cancellations as
Definition~\ref{def:rigidification}.  The twist label in $C_{\mathfrak r}(X)$ is
formal: each sector is a copy of $\Ch^\bullet(X)$ and labels add.  The product on
$K_{\mathfrak r}(X)$ is the restriction of ordinary Chow multiplication with these
formal labels; its closure also follows from the theorem below.

\begin{theorem}
\label{thm:fibre}
Assume that $\CH^p(X)$ has no nonzero $2$-torsion for every $p$.  For every line bundle
$L$, change of coefficients induces a canonical isomorphism
\begin{equation}
 \CHW^p(X,L)\xrightarrow{\ \cong\ }
 H^p(X,\I^p(L))\times_{\Ch^p(X)}K_L^p(X).
 \label{eq:fibresector}
\end{equation}
These isomorphisms are multiplicative with respect to tensor products of actual
twists.  If, in addition, $\Pic(X)$ is free and $\mathfrak r$ is a basis
rigidification, they induce an isomorphism of
$\Z\oplus(\Z/2)^\varrho$-graded rings
\begin{equation}
 \CHW^\bullet_{\mathfrak r}(X,\wtPic)
 \xrightarrow{\ \cong\ }
 A_{\mathfrak r}(X)\times_{C_{\mathfrak r}(X)}K_{\mathfrak r}(X).
 \label{eq:fibre}
\end{equation}
The two maps to $C_{\mathfrak r}(X)$ are $\rho_L$ and ordinary reduction modulo $2$ in each
twist sector.
\end{theorem}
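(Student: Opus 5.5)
The plan is to derive \eqref{eq:fibresector} from the cartesian square of twisted homotopy sheaves
\[
 \KMW_p(L)\to\KM_p,\qquad \KMW_p(L)\to\I^p(L),\qquad \KM_p\to\KM_p/2\cong\I^p/\I^{p+1}
\]
together with the long exact sequences it induces on Rost--Schmid (or Zariski) cohomology.  This is the Hornbostel--Wendt argument \cite[Proposition~2.11]{HornbostelWendt}, run for each twist $L$.  The first step is to recall that the square is cartesian with both horizontal kernels equal to $2\KM_{p+1}$ (equivalently $\I^{p+1}(L)$ on the other side).  It follows that
\[
 0\to\KMW_p(L)\to\I^p(L)\oplus\KM_p\to\KM_p/2\to0
\]
is exact, and its long exact sequence gives a Mayer--Vietoris type sequence
\[
 H^{p-1}(X,\KM_p/2)\xrightarrow{\ \partial\ }\CHW^p(X,L)\to H^p(X,\I^p(L))\oplus\CH^p(X)\to\Ch^p(X)\to\cdots.
\]
Here $H^p(X,\KM_p)=\CH^p(X)$ and $H^p(X,\KM_p/2)=\Ch^p(X)$.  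Surjectivity onto the fibre product $H^p(X,\I^p(L))\times_{\Ch^p(X)}\CH^p(X)$ is immediate from exactness.

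The next step is to identify the image of the fibre product with the displayed one.  I will show that an integral class $z\in\CH^p(X)$ lies in the image of the projection exactly when $\bar z\in\im\rho_L$.  By exactness of \eqref{eq:bar}, this happens exactly when $\beta_L(\bar z)=0$, that is, when $z\in K_L^p(X)$.  Hence
\[
 H^p(X,\I^p(L))\times_{\Ch^p(X)}\CH^p(X)=H^p(X,\I^p(L))\times_{\Ch^p(X)}K_L^p(X),
\]
and it remains only to prove that $\partial$ is zero.

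The main obstacle is injectivity, i.e.\ showing that $\partial$ vanishes under the hypothesis that $\CH^\bullet(X)$ has no $2$-torsion.  The plan is to factor $\partial$ through the integral Bockstein $H^{p-1}(X,\KM_p/2)\to H^p(X,2\KM_{p+1})\to\CH^p(X)$, so that its image in $\CHW^p(X,L)$ comes from classes killed by $2$.  Concretely, I will compare the sequence above with the sequence of $0\to\KM_{p}\xrightarrow{2}\KM_p\to\KM_p/2\to0$.  The kernel of $\CHW^p\to H^p(\I^p)\oplus\CH^p$ then maps injectively onto a subgroup of ${}_2\CH^p(X)$, which is zero.  Since this is exactly the argument of \cite[Proposition~2.11]{HornbostelWendt}, I will cite it and check only that the twist by $L$ plays no role: $\KM$ carries no twist, and all maps are compatible with twisting.

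For multiplicativity, the plan is to observe that the change-of-coefficient morphisms $\KMW\to\I$ and $\KMW\to\KM$ are morphisms of ring sheaves, compatible with the twisted pairings $\KMW_p(L)\otimes\KMW_q(M)\to\KMW_{p+q}(L\otimes M)$.  The fibre product then receives a ring map, because reduction $\CH\to\Ch$ and $\rho$ are multiplicative; it also follows that $K$ is closed under products.  For \eqref{eq:fibre}, I will sum over $\epsilon$ and check that $\theta_{\epsilon,\delta}$ and the square cancellations $s_Q$ commute with the forgetful maps.  On $\KM$ and $\Ch$ the twist is invisible and $s_Q$ becomes the identity, which is why the labels in $C_{\mathfrak r}$ and $K_{\mathfrak r}$ are formal.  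On $\I$ the same $\theta$ and $s_Q$ are used by definition.  Naturality of change of coefficients with respect to line-bundle isomorphisms then finishes the proof.
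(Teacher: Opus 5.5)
Your route is the paper's: the cartesian square of $\KMW_p(L)$, $\I^p(L)$, $\KM_p$ and $\KM_p/2$, then \cite[Proposition~2.11]{HornbostelWendt} (twisted as in \cite[Section~2.3]{WendtGrassmannians}) for surjectivity and injectivity. Multiplicativity and the rigidified statement come, as in the paper, from compatibility of the four coefficient products with $\theta_{\epsilon,\delta}$ and square cancellation, with $K_{\mathfrak r}(X)$ closed because it is the forgetful image. Your Mayer--Vietoris argument for surjectivity, and for replacing $\CH^p(X)$ by $K^p_L(X)$ via exactness of \eqref{eq:bar}, is correct and spells out what the paper leaves inside the citation.

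Your own sketch of injectivity, however, is not the cited argument and does not work as written.
\begin{itemize}
\item Multiplication by $2$ is not injective on $\KM_p$ for $p\geq1$: already $\{-1\}\neq0$ but $2\{-1\}=0$ in $K^M_1$ of any field of characteristic different from $2$. So $0\to\KM_p\xrightarrow{2}\KM_p\to\KM_p/2\to0$ is not exact.
\item The natural map runs from ${}_2\CH^p(X)$ onto the kernel, not from the kernel into ${}_2\CH^p(X)$.
\item The sheaf that appears is $2\KM_p$, not $2\KM_{p+1}$.
\end{itemize}

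The actual mechanism uses the sequence $0\to2\KM_p\to\KMW_p(L)\to\I^p(L)\to0$. It also uses the surjection $\CH^p(X)=H^p(X,\KM_p)\to H^p(X,2\KM_p)$ induced by $\KM_p\xrightarrow{2}2\KM_p$. This surjection holds because the $2$-torsion subsheaf ${}_2\KM_p$ has no cohomology above degree $p$. Together these give an exact sequence $\CH^p(X)\xrightarrow{h}\CHW^p(X,L)\to H^p(X,\I^p(L))\to0$, where $h$ is the hyperbolic map. Composing $h$ with the forgetful map to $\CH^p(X)$ gives multiplication by $2$. Hence the kernel of $\CHW^p(X,L)\to H^p(X,\I^p(L))\oplus\CH^p(X)$ is exactly $h({}_2\CH^p(X))$, which vanishes under the hypothesis.

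Since you end by citing \cite[Proposition~2.11]{HornbostelWendt}, as the paper does, the proof is complete once this sketch is corrected or dropped.
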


\begin{proof}
Fix $p$ and $L$ and abbreviate the four Rost--Schmid complexes by
\[
 \mathcal C_{\mathrm{MW}}(L)=C^\bullet_{\RS}(X,\KMW_p,L),\quad
 \mathcal C_I(L)=C^\bullet_{\RS}(X,\I^p,L),
\]
\[
 \mathcal C_M=C^\bullet_{\RS}(X,\KM_p),\qquad
 \overline{\mathcal C}_M=C^\bullet_{\RS}(X,\KM_p/2).
\]
The fundamental square of homotopy modules, after tensoring orientation lines with
$L$, gives a termwise cartesian square
\begin{equation}
\begin{array}{ccc}
 \mathcal C_{\mathrm{MW}}(L)&\longrightarrow&\mathcal C_I(L)\\
 \big\downarrow&&\big\downarrow\\
 \mathcal C_M&\longrightarrow&\overline{\mathcal C}_M.
\end{array}
\label{eq:complexpullback}
\end{equation}
At a codimension-$q$ point this is the cartesian square of
$K^{\mathrm{MW}}_{p-q}$, $I^{p-q}$, $K^{\mathrm M}_{p-q}$, and
$K^{\mathrm M}_{p-q}/2$, all with the residue-field orientation line; compatibility
of the four residue homomorphisms makes \eqref{eq:complexpullback} a square of
complexes, not merely of terms.

The resulting cohomology comparison is the twisted analogue of
\cite[Proposition~2.11]{HornbostelWendt}.  It is stated, including the arbitrary
line-bundle twist and the total-ring product, in
\cite[Section~2.3]{WendtGrassmannians}: the canonical map
\[
 \CHW^p(X,L)\longrightarrow
 H^p(X,\I^p(L))\times_{\Ch^p(X)}K_L^p(X)
\]
is always surjective, and its kernel vanishes when $\CH^p(X)$ has no nonzero
$2$-torsion.  This proves \eqref{eq:fibresector}.

Finally, exterior products on the four complexes in \eqref{eq:complexpullback} commute
with the pullback along the diagonal and send the $(L,M)$ sectors to the
$L\otimes M$ sector \cite[Section~4]{FaselRing}.  These products also commute with
transport along $\theta_{\epsilon,\delta}$ and with square cancellation.  Applying
the same rigidification to all four corners therefore makes the sectorwise
isomorphisms assemble to \eqref{eq:fibre}.  In particular,
$K_{\mathfrak r}(X)$ is a subring: it is the image of the forgetful homomorphism
from $\CHW^\bullet_{\mathfrak r}(X,\wtPic)$.
\end{proof}

\subsection{When a Steenrod kernel is legitimate}

Asok--Fasel's twisted form of Totaro's theorem identifies the composite in
\eqref{eq:bar} as
\begin{equation}
 \rho_L\beta_L(z)=\operatorname{Sq}^2_L(z)
 =\operatorname{Sq}^2(z)+c_1(L)z
 \quad\text{in }\Ch^{p+1}(X).
 \label{eq:twistedsq}
\end{equation}
Consequently
\[
 \ker\beta_L\subseteq\ker\operatorname{Sq}^2_L,
\]
but equality requires an additional argument.

\begin{lemma}
\label{lem:sqcriterion}
For fixed $p$ and $L$, suppose that
\[
 \rho_L:H^{p+1}(X,\I^{p+1}(L))\longrightarrow\Ch^{p+1}(X)
\]
is injective on $\im\beta_L$.  Then
\[
 K_L^p(X)=
 \{z\in\CH^p(X):\operatorname{Sq}^2_L(\bar z)=0\}.
\]
Without this injectivity, the right-hand side is only an upper bound for
$K_L^p(X)$.
\end{lemma}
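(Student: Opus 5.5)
The argument is a short diagram chase using the factorization \eqref{eq:twistedsq} of $\operatorname{Sq}^2_L$ through $\beta_L$. Fix $z\in\CH^p(X)$ and write $\bar z\in\Ch^p(X)$ for its reduction modulo $2$.

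By definition \eqref{eq:exactK}, $z\in K_L^p(X)$ exactly when $\beta_L(\bar z)=0$. Applying $\rho_L$ and using \eqref{eq:twistedsq} gives
\[
 \operatorname{Sq}^2_L(\bar z)=\rho_L\beta_L(\bar z)=\rho_L(0)=0 .
\]
This proves the inclusion of $K_L^p(X)$ into the Steenrod set unconditionally, which is the ``upper bound'' assertion of the lemma.

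For the reverse inclusion, suppose $\operatorname{Sq}^2_L(\bar z)=0$. Then $\rho_L(\beta_L(\bar z))=0$. The element $\beta_L(\bar z)$ lies in $\im\beta_L$, and $0=\beta_L(0)$ also lies there. Since $\rho_L$ is assumed injective on $\im\beta_L$, it follows that $\beta_L(\bar z)=0$, i.e.\ $z\in K_L^p(X)$.

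Some care is needed about what ``injective on $\im\beta_L$'' means. Here $\im\beta_L$ is a subgroup of $H^{p+1}(X,\I^{p+1}(L))$ and $\rho_L$ is a group homomorphism. So the hypothesis is equivalent to $\ker\rho_L\cap\im\beta_L=0$, and this is exactly the form used in the previous paragraph.

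I would also confirm that the $\rho_L$ in the hypothesis, in degree $p+1$, is the same map as the reduction appearing in the composite \eqref{eq:twistedsq}. This holds because \eqref{eq:twistedsq} is precisely the statement that the degree-$(p+1)$ reduction composed with $\beta_L$ equals $\operatorname{Sq}^2_L$.

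I expect no real obstacle; the only point needing attention is the sign and twist convention. One must check that the $L$ in $\operatorname{Sq}^2_L=\operatorname{Sq}^2+c_1(L)$ is the same twist $L$ as in the Bär sequence \eqref{eq:bar}, under this paper's degree-zero graded convention. This is harmless because $c_1(L)$ and $c_1(L^\vee)$ agree modulo $2$.
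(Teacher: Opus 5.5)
Your proposal is correct and follows the paper's own proof: you get the unconditional inclusion (the ``upper bound'') from $\rho_L\beta_L=\operatorname{Sq}^2_L$, and the converse from injectivity of $\rho_L$ on $\im\beta_L$ applied to $\beta_L(\bar z)$. Your added remarks on reading injectivity as $\ker\rho_L\cap\im\beta_L=0$, and on $c_1(L)\equiv c_1(L^\vee)\pmod 2$, are accurate and harmless.
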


\begin{proof}
The inclusion from left to right follows from \eqref{eq:twistedsq}.  Conversely, if
$\operatorname{Sq}^2_L(\bar z)=0$, then $\rho_L\beta_L(\bar z)=0$.  Injectivity on
$\im\beta_L$ gives $\beta_L(\bar z)=0$, hence $z\in K_L^p(X)$.
\end{proof}

The algorithm below computes $\beta_L$ itself from an integral signed cochain
complex.  It therefore does not need the injectivity hypothesis in
Lemma~\ref{lem:sqcriterion}.

\section{Smooth projective toric input}
\label{sec:toric}

Let $\Sigma$ be a smooth projective fan of dimension $n$ in a lattice $N\cong\Z^n$.
Write $\Sigma(1)=\{\rho_1,\ldots,\rho_r\}$, let $v_i\in N$ be the primitive ray
generator of $\rho_i$, and let $D_i$ be the corresponding invariant divisor.  Put
$M=N^\vee$.

\subsection{Chow groups and twists}

Let $K=K_\Sigma$ be the simplicial complex on $[r]$ whose faces are the sets of rays
contained in a cone of $\Sigma$.  The Danilov--Jurkiewicz presentation is
\begin{equation}
 \CH^*(X_\Sigma)=
 \frac{\Z[x_1,\ldots,x_r]}{I_{\mathrm{SR}}+J_\Sigma},
 \label{eq:DJ}
\end{equation}
where $x_i=[D_i]$, $I_{\mathrm{SR}}$ is generated by
$\prod_{i\in S}x_i$ for $S\notin K$, and
\[
 J_\Sigma=
 \left(\sum_{i=1}^r\langle m,v_i\rangle x_i:m\in M\right)
\]
\cite[Section~5.2]{Fulton}.  Completeness is part of the hypothesis here; we make no
Stanley--Reisner assertion for arbitrary noncomplete fans.

The divisor sequence is
\begin{equation}
 0\longrightarrow M\xrightarrow{\lambda^\vee}\Z^r
 \longrightarrow\Pic(X_\Sigma)\longrightarrow0,
 \qquad
 m\longmapsto(\langle m,v_i\rangle)_i.
 \label{eq:divisorsequence}
\end{equation}
For a smooth complete toric variety the Picard group is free abelian.  Reducing
\eqref{eq:divisorsequence} modulo $2$ gives
\begin{equation}
 0\longrightarrow M/2\xrightarrow{\lambda_2^\vee}(\F_2)^r
 \longrightarrow\Pic(X_\Sigma)/2\longrightarrow0,
 \label{eq:picmod2}
\end{equation}
where
\[
 \lambda_2:(\F_2)^r\longrightarrow N/2,
 \qquad e_i\longmapsto v_i\bmod2.
\]
Because $\Sigma$ contains a smooth full-dimensional cone, $\lambda_2$ is surjective.

\begin{proposition}
\label{prop:cellular}
The variety $X_\Sigma$ has a filtration by closed subvarieties whose successive open
strata are affine spaces.  In particular, every $\CH^p(X_\Sigma)$ is free abelian,
and Theorem~\ref{thm:fibre} applies to every twist.
\end{proposition}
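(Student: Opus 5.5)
The plan is to use projectivity to build a filtrable decomposition via a Bia{\l}ynicki-Birula type argument, or more elementarily via a shelling of the fan. Choose a generic one-parameter subgroup $\lambda\colon\mathbb G_m\to T$, i.e.\ a cocharacter $u\in N$ not lying on any hyperplane orthogonal to the differences of characters relevant to the fixed points. The $T$-fixed points of $X_\Sigma$ are the points $x_\sigma$ indexed by maximal cones $\sigma\in\Sigma(n)$. For each $\sigma$ let
\[
 U_\sigma^+=\{x\in X_\Sigma:\lim_{t\to0}\lambda(t)x=x_\sigma\}.
\]
Since $X_\Sigma$ is complete, every point has a limit, and the $U_\sigma^+$ partition $X_\Sigma$. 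Concretely, $U_\sigma^+$ lies in the affine chart $U_\sigma\cong\mathbb A^n$, whose coordinates are the dual basis characters $m_1,\dots,m_n$ of $\sigma$. It is the coordinate subspace cut out by those $m_j$ with $\langle m_j,u\rangle<0$, so it is an affine space $\mathbb A^{d_\sigma}$, split over $R$ (in fact over any field).

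Next, order the maximal cones so that the closures are unions of earlier cells. I would do this with the standard argument from \cite[Section~5.2]{Fulton}: projectivity gives a strictly convex support function, equivalently a shelling order (Bruggesser--Mani, cited in the introduction). In this order, each $\overline{U_\sigma^+}$ is contained in the union of $U_\sigma^+$ with the cells of cones $\tau$ coming before $\sigma$. Setting $Z_j=\bigcup_{i\le j}U^+_{\sigma_i}$ (or its complement-indexed version) gives closed subvarieties with successive differences $Z_j\setminus Z_{j-1}\cong\mathbb A^{d_{\sigma_j}}$. This ordering is where projectivity is genuinely used, and I expect it to be the main obstacle. For merely complete fans the limit relation can fail to be acyclic. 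The cleanest route is to take $u$ generic together with an ample $T$-linearized line bundle, and to order the fixed points by the value of the moment-map/weight $\langle m_\sigma,u\rangle$, where $m_\sigma$ is the vertex of the polytope corresponding to $\sigma$. Limits strictly decrease this value, which gives the required closure relation.

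For freeness, use the localization sequence $\CH_*(Z_{j-1})\to\CH_*(Z_j)\to\CH_*(\mathbb A^{d})\to0$ together with $\CH_*(\mathbb A^d)=\Z[\mathbb A^d]$. By induction, each $\CH_*(Z_j)$ is generated by the closures of the cells. The total rank of the groups $\CH^*(X_\Sigma)$ is at most $|\Sigma(n)|$. Freeness additionally needs injectivity on the left. I would obtain it by comparing ranks: the number of cells in dimension $k$ equals the $h$-vector entry $h_{k}$. The Danilov--Jurkiewicz presentation \eqref{eq:DJ}, with $I_{\mathrm{SR}}+J_\Sigma$ generated by a linear system of parameters in the Cohen--Macaulay Stanley--Reisner ring of the simplicial sphere $K$, is a free abelian group of rank $h_k$ in degree $k$. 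A surjection $\Z^{h_k}\to\CH^k(X_\Sigma)$ onto a free group of rank $h_k$ is an isomorphism, so the groups are free. Alternatively, one can avoid the rank count by using that the cell closures pair perfectly under intersection with the opposite cells ($u\mapsto -u$), which forces linear independence.

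Finally, free abelian $\CH^p(X_\Sigma)$ has no nonzero $2$-torsion. Since this holds for every $p$ independently of the twist, the hypothesis of Theorem~\ref{thm:fibre} is satisfied for every line bundle $L$.
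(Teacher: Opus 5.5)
Your proposal is correct and follows the paper's route: a generic one-parameter subgroup, the Bia\l ynicki-Birula attracting cells (which you make explicit as coordinate subspaces of the charts $U_\sigma$), filtrability from projectivity via the weights $\langle m_\sigma,u\rangle$, and induction over the filtration with the localization sequence. The one substantive difference is at the freeness step: you correctly note that the right-exact sequence $\CH_*(Z_{j-1})\to\CH_*(Z_j)\to\CH_*(\mathbb A^{d})\to0$ only shows that the cell closures generate, and you obtain injectivity from the $h$-vector rank count against the Danilov--Jurkiewicz presentation (or from the unitriangular pairing with the opposite cells), whereas the paper compresses this into its one-line appeal to localization and leaves injectivity implicit (it could be supplied, for example, by the extended localization sequence with higher Chow groups, whose boundary map out of $\CH_*(\mathbb A^{d},1)$ vanishes).
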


\begin{proof}
Choose a generic one-parameter subgroup of the split torus.  Its fixed points are the
finitely many torus fixed points indexed by maximal cones.  Since $X_\Sigma$ is smooth
and projective, the Bia\l ynicki--Birula decomposition is filtrable and each attracting
stratum is an affine space.  The localization sequence for Chow groups, inducting over
the filtration, expresses each Chow group as a free abelian group with one generator
for each cell of the appropriate codimension.  This also proves directly the
$2$-torsion hypothesis required in Theorem~\ref{thm:fibre}.
\end{proof}

\subsection{The real moment-angle cover}

Let $G=(\Z/2)^r$ act on
\[
 \RZ_K=(D^1,S^0)^K
 =\bigcup_{\sigma\in K}(D^1)^\sigma\times(S^0)^{[r]\setminus\sigma}
\]
by coordinate sign changes, and put $K_0=\ker\lambda_2$.  For a smooth projective
fan, the real toric space $\RZ_K/K_0$ is a finite model for the real locus
$X_\Sigma(\mathbb R)$; equivalently, it is the associated small cover.  This is a
special case of the real toric-space construction and of Franz's comparison with
smooth real toric varieties \cite[Corollary~4.6]{Franz}.

The following elementary lemma is the point at which smoothness enters the
topological construction.

\begin{lemma}
\label{lem:freeness}
The action of $K_0$ on $\RZ_K$ is free.
\end{lemma}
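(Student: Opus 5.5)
The plan is to compute the stabilizer of an arbitrary point of $\RZ_K$ in the full group $G=(\Z/2)^r$, and then intersect it with $K_0=\ker\lambda_2$.

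\emph{Step 1: stabilizers in $G$.} Take a point $x=(x_1,\ldots,x_r)\in\RZ_K$. By the definition of the polyhedral product, there is a face $\sigma\in K$ with $x_j\in S^0=\{\pm1\}$ for every $j\notin\sigma$. The generator $e_j$ of $G$ acts by $x_j\mapsto -x_j$ and fixes the other coordinates. Hence an element $g=\sum_j g_je_j$ fixes $x$ exactly when $g_j=0$ for every $j$ with $x_j\neq0$. Put $\tau(x)=\{j:x_j=0\}$. Since $x_j\neq 0$ for $j\notin\sigma$, we have $\tau(x)\subseteq\sigma$, so $\tau(x)\in K$ because $K$ is closed under subsets. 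The stabilizer is therefore the coordinate subgroup
\[
G_x=(\Z/2)^{\tau(x)}=\operatorname{span}\{e_j:j\in\tau(x)\}.
\]

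\emph{Step 2: intersect with $K_0$.} Suppose $g\in K_0\cap G_x$. Then $g=\sum_{j\in\tau(x)}g_je_j$ and $\lambda_2(g)=\sum_{j\in\tau(x)}g_j\,\bar v_j=0$ in $N/2$.

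\emph{Step 3: smoothness.} The set $\tau(x)$ is a face of $K$, so the rays $\{\rho_j:j\in\tau(x)\}$ span a cone $\sigma'$ of $\Sigma$. Because $\Sigma$ is smooth, $\{v_j:j\in\tau(x)\}$ is part of a $\Z$-basis of $N$. Its reductions are then part of an $\F_2$-basis of $N/2$, and in particular they are linearly independent over $\F_2$. Step 2 therefore forces $g_j=0$ for all $j$, so $g=0$ and $K_0$ acts freely.

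The only substantive point is Step 3: a smooth cone is generated by part of a lattice basis, and this persists modulo $2$. I would justify it directly. The cone $\sigma'$ is smooth, so its primitive generators extend to a $\Z$-basis of $N\cong\Z^n$. Reducing that basis modulo $2$ gives a basis of $N/2\cong\F_2^n$, because the change-of-basis matrix has determinant $\pm1$, which is a unit in $\F_2$. Linear independence over $\Z$ alone would not suffice here; unimodularity is exactly the ingredient being used. Finally, I would point out that the proof does not use projectivity beyond the fact that $K$ is the face complex of $\Sigma$.
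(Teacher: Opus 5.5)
Your proposal is correct and follows essentially the same route as the paper's proof. Both identify the $G$-stabilizer of a point as the coordinate subgroup on its zero coordinates, which form a face of $K$, and then use the fact that the primitive generators of a smooth cone extend to a $\Z$-basis, so they stay linearly independent modulo $2$ and hence $K_0\cap G_\sigma=0$.
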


\begin{proof}
For $z=(z_1,\ldots,z_r)\in\RZ_K$, let
$\sigma(z)=\{i:z_i=0\}$.  By the definition of the polyhedral product,
$\sigma(z)\in K$.  A coordinate sign change fixes $z$ precisely in coordinates
belonging to $\sigma(z)$, so
\[
 \operatorname{Stab}_G(z)=G_{\sigma(z)}
 =\langle e_i:i\in\sigma(z)\rangle.
\]
The rays indexed by a face $\sigma\in K$ lie in a smooth cone.  Their primitive
generators extend to a $\Z$-basis of $N$, hence their reductions are linearly
independent in $N/2$.  Thus $\lambda_2|_{G_\sigma}$ is injective and
$K_0\cap G_\sigma=0$.  Therefore every $K_0$-stabilizer is trivial.
\end{proof}

\begin{lemma}
\label{lem:characters}
There is a natural isomorphism
\begin{equation}
 \Pic(X_\Sigma)/2\xrightarrow{\ \cong\ }
 \Hom(K_0,\{\pm1\}).
 \label{eq:characters}
\end{equation}
If $L=\mathcal O(\sum_i a_iD_i)$, the corresponding character is
\[
 \chi_L(g)=(-1)^{\sum_i a_ig_i},\qquad g=(g_i)\in K_0.
\]
The local system determined by the real line bundle $L(\mathbb R)$ on
$X_\Sigma(\mathbb R)$ is the sign local system associated with $\chi_L$.
\end{lemma}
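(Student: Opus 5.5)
The isomorphism \eqref{eq:characters} will come from dualizing the mod-$2$ divisor sequence \eqref{eq:picmod2}, and the local-system statement from the standard description of $X_\Sigma(\mathbb R)$ as a quotient of real Cox space.

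\textbf{Step 1: the isomorphism.} Apply $\Hom_{\F_2}(-,\F_2)$ to \eqref{eq:picmod2}. This gives the exact sequence
\[
 0\to\Hom(\Pic(X_\Sigma)/2,\F_2)\to\Hom((\F_2)^r,\F_2)\to\Hom(M/2,\F_2)\to0 .
\]
Identify $\Hom((\F_2)^r,\F_2)$ with $(\F_2)^r$ via the standard basis, and identify $\Hom(M/2,\F_2)$ with $N/2$. Under these identifications the dual of $\lambda_2^\vee$ is $\lambda_2$, so
\[
 \Hom(\Pic/2,\F_2)\cong\ker\lambda_2=K_0 .
\]
All groups are finite-dimensional $\F_2$-vector spaces, so dualizing once more gives
\[
 \Pic(X_\Sigma)/2\cong\Hom(K_0,\F_2)=\Hom(K_0,\{\pm1\}).
\]
Concretely, a class represented by $a\in\Z^r$ pairs with $g\in K_0$ by $\sum_i a_ig_i \bmod 2$. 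This pairing is well defined: if $a=\lambda^\vee(m)$, then $\sum_i\langle m,v_i\rangle g_i=\langle m,\lambda_2(g)\rangle=0$. This gives the formula for $\chi_L$, and naturality is evident.

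\textbf{Step 2: the local system.} Use the real Cox construction. $X_\Sigma(\mathbb R)$ is the quotient of the real Cox open set $U_\Sigma(\mathbb R)\subset\mathbb R^r$ by the real points of $\ker\bigl((\mathbb G_m)^r\to T_N\bigr)$. This kernel is a split torus times the finite part, and its real points have component group identified with $K_0$. Retracting $U_\Sigma(\mathbb R)$ by the positive part $(\mathbb R_{>0})$ of the group recovers $\RZ_K$, and the residual action is that of $K_0$.

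The line bundle $\mathcal O(\sum a_iD_i)$ is the quotient of the trivial bundle by the group acting through the character $t\mapsto\prod t_i^{a_i}$. Restricted to $K_0\subset\{\pm1\}^r$, this character is $g\mapsto\prod_i(-1)^{a_ig_i}=\chi_L(g)$. Hence $L(\mathbb R)$ pulled back to $\RZ_K$ is trivial, with $K_0$ acting on the fibre by $\chi_L$. Its orientation local system on $\RZ_K/K_0$ is therefore the sign system $\Z_{\chi_L}$.

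\textbf{Main obstacle.} The hard step is making the Cox-quotient identification compatible with the model $\RZ_K/K_0$ cited from \cite{Franz}. One must check that the positive-part retraction is equivariant for $K_0$, and that the bundle descends compatibly along it. I would handle this by writing the equivariant deformation retraction explicitly, coordinatewise via $|z_i|$-normalization on each cone chart. I would then note that a $K_0$-equivariant homotopy equivalence between free spaces carries the orientation character along unchanged.
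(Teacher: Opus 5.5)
Your proposal is correct and follows essentially the same route as the paper. You dualize the mod-$2$ divisor sequence; the paper dualizes $0\to K_0\to(\F_2)^r\to N/2\to0$ instead, which is equivalent. You then read off the local system from the sign by which the divisor character $t\mapsto\prod_i t_i^{a_i}$ acts on the Cox cover, giving $(\RZ_K\times\mathbb R)/K_0$ with fibre action $\chi_L$.

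Your extra details are what the paper leaves implicit by quoting the identification $\RZ_K/K_0\simeq X_\Sigma(\mathbb R)$ from Franz. These are that the real Cox kernel has component group $K_0$, and that its positive part retracts $U_\Sigma(\mathbb R)$ onto $\RZ_K$. One small point: since $\Pic$ is free, the Cox kernel is a split torus with no finite part. That is also what makes $U_\Sigma(\mathbb R)\to X_\Sigma(\mathbb R)$ surjective, by Hilbert~90.
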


\begin{proof}
Dualizing the exact sequence
$0\to K_0\to(\F_2)^r\xrightarrow{\lambda_2}N/2\to0$ gives
\[
 0\to M/2\xrightarrow{\lambda_2^\vee}(\F_2)^r
 \to\Hom(K_0,\F_2)\to0.
\]
Comparison with \eqref{eq:picmod2} proves the first assertion.  A divisor vector
$a=(a_i)$ acts on the Cox coordinate cover by the sign $(-1)^{a\cdot g}$.
Changing $a$ by a principal divisor adds an element in $\im\lambda_2^\vee$, which is
trivial on $K_0$.  Thus the associated bundle is
\[
 (\RZ_K\times\mathbb R)/K_0,
 \qquad g\cdot(z,t)=(gz,\chi_L(g)t),
\]
and its orientation local system is exactly $\Z_{\chi_L}$.
\end{proof}

\section{Signed cochains and products}
\label{sec:signed}

Let $Y=\RZ_K$ and $H=K_0$.  By Lemma~\ref{lem:freeness}, $Y\to Y/H$ is a
finite regular covering.  We now give a chain-level model that simultaneously computes
all sign local systems and all products between them.

\subsection{An equivariant diagonal}

The standard cubical cell structure on $Y$ need not carry a strictly equivariant
cellular diagonal in the form one first writes down.  We avoid this issue by passing to
a canonical invariant subdivision.

\begin{proposition}
\label{prop:equivdiag}
There is a finite $H$-simplicial complex $Y'$ equivariantly homeomorphic to a
subdivision of $Y$ such that:
\begin{enumerate}[(i)]
\item $H$ acts freely on the simplices of $Y'$;
\item $C_*(Y';\Z)$ is a bounded complex of finite free $\Z[H]$-modules;
\item the Alexander--Whitney map
\[
 \Delta_{\mathrm{AW}}:C_*(Y';\Z)\longrightarrow
 C_*(Y';\Z)\otimes C_*(Y';\Z)
\]
is $H$-equivariant.
\end{enumerate}
Its induced products on cohomology are independent of the chosen invariant
subdivision and agree with the canonical cup products with local coefficients.
\end{proposition}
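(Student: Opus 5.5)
Take $Y'$ to be the barycentric subdivision of the cubical structure on $Y=\RZ_K$; if one subdivision is not enough, take the second. Each cube $(D^1)^\sigma\times(S^0)^{[r]\setminus\sigma}$ is a product of intervals and points. The group $G$, and hence $H$, acts on $Y$ by cubical automorphisms, since coordinate sign changes permute cubes and act affinely on each one. The barycentric subdivision of a regular cell complex is a simplicial complex whose simplices are chains of cells $c_0<c_1<\cdots<c_k$. Each such simplex carries a canonical vertex order given by increasing dimension of the cell. Cellular automorphisms preserve inclusion and dimension, so $H$ acts simplicially on $Y'$ and preserves this ordering. This order-preservation is the point of the construction: the action is order-preserving on ordered simplices, not merely a permutation of vertices.

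For (i), suppose $h\in H$ maps a simplex $c_0<\cdots<c_k$ to itself. Because $h$ preserves cell dimensions, it fixes each $c_i$ setwise, and in particular fixes the barycenter of $c_0$. That barycenter is a point of $Y$, and by Lemma~\ref{lem:freeness} the action of $H$ on $Y$ is free, so $h=1$. Hence $H$ acts freely on simplices and not only on points. Statement (ii) follows: $C_k(Y')$ is free on the simplices, $Y'$ is finite because $Y$ is compact with finitely many cubes, and $C_k(Y')$ splits as a direct sum of $\Z[H]$-orbits, each of which is free of rank one.

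For (iii), use the ordered simplicial chain complex. The Alexander--Whitney formula
\[
 \Delta_{\mathrm{AW}}[w_0,\dots,w_k]=\sum_{i}[w_0,\dots,w_i]\otimes[w_i,\dots,w_k]
\]
is natural for order-preserving simplicial maps. Each $h\in H$ is such a map, so $h\otimes h$ commutes with $\Delta_{\mathrm{AW}}$, which gives equivariance.

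It remains to prove independence of the subdivision and agreement with the canonical cup products. I expect this to be the main obstacle, because it is where the various chain models must be compared carefully. The cochains $\Hom_{\Z[H]}(C_*(Y'),\Z_\chi)$ are the simplicial cochains of $Y'/H$ with coefficients in the local system $\Z_\chi$. The cup product
\[
 \Hom_{\Z[H]}(C_*(Y'),\Z_\chi)\otimes\Hom_{\Z[H]}(C_*(Y'),\Z_\psi)\longrightarrow\Hom_{\Z[H]}(C_*(Y'),\Z_{\chi\psi})
\]
is $(a\cup b)(x)=(a\otimes b)\Delta_{\mathrm{AW}}(x)$, which makes sense by (iii).

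To compare with singular theory, I would use the natural $H$-equivariant chain map $C_*(Y')\to C^{\mathrm{sing}}_*(Y)$ that sends an ordered simplex to its characteristic singular simplex. This map is an equivariant quasi-isomorphism between complexes of free $\Z[H]$-modules, hence an $H$-homotopy equivalence. It commutes strictly with the Alexander--Whitney diagonals, because singular Alexander--Whitney is defined by the same front-face and back-face formula. Applying $\Hom_{\Z[H]}(-,\Z_\chi)$ therefore identifies the simplicial cup products with the singular cup products with local coefficients on $Y/H$. Those singular products are the canonical ones, and they do not refer to $Y'$ at all. This proves independence of the chosen invariant subdivision, since any two invariant subdivisions are compared through the same singular model.
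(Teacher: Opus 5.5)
Your proposal is correct and follows essentially the paper's route. It uses the barycentric subdivision of the cubical complex, deduces freeness on simplices from freeness on points, obtains equivariance of Alexander--Whitney from naturality under simplicial maps, and identifies the products with the canonical ones by comparison with singular chains. Your point that the cubical action preserves the dimension ordering of chains in the face poset makes the paper's phrase ``simplicial without inversions'' precise, and shows that one subdivision already suffices.
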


\begin{proof}
Take the barycentric subdivision of the finite cubical complex $Y$ and, if necessary,
subdivide once more so that the action is simplicial without inversions.  Since the
action on points is free, the action on simplices is free.  Choosing one simplex in
each orbit identifies every simplicial chain group with a finite direct sum of copies of
$\Z[H]$.

The Alexander--Whitney diagonal is natural with respect to simplicial maps.  Hence
for $h\in H$,
\[
 \Delta_{\mathrm{AW}}\circ h_*
 =(h_*\otimes h_*)\circ\Delta_{\mathrm{AW}},
\]
which proves equivariance.  The comparison maps between subdivisions are equivariant
chain homotopy equivalences.  Naturality of the singular cup product, or the standard
acyclic-models comparison with singular chains, shows that the induced cohomology
product is the canonical one and is independent of the model.
\end{proof}

\subsection{All sign local systems at once}

For a character $\chi:H\to\{\pm1\}$, let $\Z_\chi$ be the left
$\Z[H]$-module on which $h$ acts by multiplication by $\chi(h)$.  Define
\begin{equation}
 C^*_\chi(\Sigma)=
 \Hom_{\Z[H]}(C_*(Y';\Z),\Z_\chi).
 \label{eq:signedcomplex}
\end{equation}

\begin{proposition}
\label{prop:signedring}
There are natural isomorphisms
\[
 H^p(C^*_\chi(\Sigma))
 \cong H^p(Y/H;\Z_\chi).
\]
The equivariant diagonal of Proposition~\ref{prop:equivdiag}, followed by
multiplication $\Z_\chi\otimes\Z_\psi\to\Z_{\chi\psi}$, induces cochain pairings
\[
 C^*_\chi(\Sigma)\otimes C^*_\psi(\Sigma)
 \longrightarrow C^*_{\chi\psi}(\Sigma)
\]
whose cohomology pairings are the cup products with local coefficients.  Therefore
\begin{equation}
 \mathcal H_\Sigma=
 \bigoplus_{p,\chi}H^p(C^*_\chi(\Sigma))
 \label{eq:totalH}
\end{equation}
is a $\Z\oplus\Hom(H,\{\pm1\})$-graded ring computed from a finite presentation by
integer matrices.
\end{proposition}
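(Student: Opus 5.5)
The plan is to identify $C^*_\chi(\Sigma)$ with the cellular cochains of $Y'/H$ with local coefficients, and then to transport the Alexander--Whitney diagonal of Proposition~\ref{prop:equivdiag} through that identification.

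\textbf{Step 1 (the additive isomorphism).} By Lemma~\ref{lem:freeness}, $Y\to Y/H$ is a regular covering with deck group $H$. By Proposition~\ref{prop:equivdiag}(i), $Y'/H$ inherits a $\Delta$-complex structure whose simplices are the $H$-orbits of simplices of $Y'$. Fix a representative $\tilde s$ in each orbit; by (ii) the chain groups are then free $\Z[H]$-modules on these representatives. A $\Z[H]$-linear map $f:C_*(Y')\to\Z_\chi$ is determined by its values $f(\tilde s)$, subject to $f(h\tilde s)=\chi(h)f(\tilde s)$. This is precisely the standard description of cochains on $Y'/H$ with coefficients in the local system $Y'\times_H\Z_\chi$, which is the sign local system $\Z_\chi$.

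The coboundaries agree because the boundary of $C_*(Y')$ is $\Z[H]$-linear. The simplicial cohomology with local coefficients of the finite $\Delta$-complex $Y'/H$ computes $H^*(Y/H;\Z_\chi)$, by the usual comparison with singular cohomology with twisted coefficients; for example, use $\Hom_{\Z[H]}(C^{\mathrm{sing}}_*(Y),\Z_\chi)$ and the $H$-equivariant chain equivalence $C_*(Y')\simeq C^{\mathrm{sing}}_*(Y)$ between free $\Z[H]$-complexes. Naturality in $\chi$ is clear.

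\textbf{Step 2 (the pairing).} For $f\in C^p_\chi$ and $g\in C^q_\psi$, set
\[
 f\smile g=\mu\circ(f\otimes g)\circ\Delta_{\mathrm{AW}},
\]
where $\mu:\Z_\chi\otimes\Z_\psi\to\Z_{\chi\psi}$ is multiplication. Equivariance of $\Delta_{\mathrm{AW}}$ from Proposition~\ref{prop:equivdiag}(iii), with $H$ acting diagonally on the tensor product, gives
\[
 (f\smile g)(h x)=\chi(h)\psi(h)(f\smile g)(x),
\]
so $f\smile g$ lies in $C^{p+q}_{\chi\psi}$. The Leibniz rule holds because $\Delta_{\mathrm{AW}}$ is a chain map, so the product descends to cohomology. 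The same equivariance computation shows that these pairings are associative and unital at chain level, since $\Delta_{\mathrm{AW}}$ is strictly coassociative and counital.

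\textbf{Step 3 (agreement with the local-coefficient cup product).} This is where the main obstacle lies. I would argue through the covering: pulling back along $Y\to Y/H$ identifies twisted cochains with $\chi$-equivariant cochains on $Y$. The twisted cup product is characterized as the equivariant part of the ordinary cup product on $Y$ composed with $\mu$. The last sentence of Proposition~\ref{prop:equivdiag} says the Alexander--Whitney product on $C_*(Y')$ is equivariantly chain homotopic to the singular one. To make the homotopy equivariant, I would use equivariant acyclic models, or simply note that the comparison maps are $\Z[H]$-chain maps between free complexes, so homotopies can be chosen $\Z[H]$-linearly. Applying $\Hom_{\Z[H]}(-,\Z_\chi)$ then transports the identification to the twisted setting.

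\textbf{Step 4 (grading and finiteness).} Summing over $p$ and $\chi\in\Hom(H,\{\pm1\})$ gives the bigraded ring $\mathcal H_\Sigma$. Finiteness follows because each $C^p_\chi\cong\Z^{N_p}$, where $N_p$ is the number of $H$-orbits of $p$-simplices. The coboundaries and the diagonal coefficients are then finite integer matrices, and Smith normal form computes the cohomology groups and the structure constants of the products.
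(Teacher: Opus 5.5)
Your proposal is correct and follows the same route as the paper: identify the $\Z[H]$-linear cochains on the free complex $C_*(Y')$ with cochains on $Y'/H$ twisted by $\Z_\chi$, then use the $H$-equivariance of $\Delta_{\mathrm{AW}}$ and the multiplication $\Z_\chi\otimes\Z_\psi\to\Z_{\chi\psi}$ to obtain the pairings. The paper's proof is a two-sentence version of your Steps 1--2 and leaves the comparison with the canonical local-coefficient cup product (your Step 3) to the last assertion of Proposition~\ref{prop:equivdiag}; in fact your Step 3 can be simplified, because the map from ordered simplicial chains to singular chains commutes strictly with the Alexander--Whitney diagonals, so you only need that this map is an $H$-equivariant homotopy equivalence.
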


\begin{proof}
Because $Y'\to Y'/H$ is a regular finite cover and $C_*(Y';\Z)$ is a complex of free
$\Z[H]$-modules, equivariant cochains are precisely cellular cochains on the quotient
with the associated local system.  The assertion about products follows from
$H$-equivariance of the diagonal and the defining tensor rule for sign modules.
\end{proof}

Reduction $\Z_\chi\to\F_2$ forgets the sign action and gives maps
\begin{equation}
 r_\chi:H^p(C^*_\chi(\Sigma))\longrightarrow H^p(Y/H;\F_2).
 \label{eq:reduction}
\end{equation}
The exact coefficient sequence
\begin{equation}
 0\longrightarrow\Z_\chi\xrightarrow{\,2\,}\Z_\chi
 \longrightarrow\F_2\longrightarrow0
 \label{eq:topbocksteinsequence}
\end{equation}
also gives a twisted Bockstein
\begin{equation}
 \beta_\chi:H^p(Y/H;\F_2)\longrightarrow H^{p+1}(Y/H;\Z_\chi).
 \label{eq:topbockstein}
\end{equation}
Both maps are obtained by finite integer linear algebra on the cochain matrices.

\begin{corollary}
\label{cor:effective}
From the rays and cones of $\Sigma$ one can compute, by a terminating procedure:
\begin{enumerate}[(i)]
\item every group $H^p(Y/H;\Z_\chi)$ by Smith normal form;
\item every product between character sectors;
\item every reduction map $r_\chi$;
\item every twisted Bockstein $\beta_\chi$.
\end{enumerate}
\end{corollary}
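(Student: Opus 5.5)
The plan is to show that every item in the corollary reduces to finite integer linear algebra on explicit finite data: the $H$-simplicial complex $Y'$ of Proposition~\ref{prop:equivdiag} together with its simplicial boundary and Alexander--Whitney matrices. First, the input data. From the rays $v_1,\ldots,v_r$ and the maximal cones I will read off the simplicial complex $K$ as the downward closure of the maximal cones. I then compute $\lambda_2$ by reducing the $v_i$ modulo $2$, and obtain $H=K_0=\ker\lambda_2$ by Gaussian elimination over $\F_2$. The characters $\chi$ range over the finite set $\Hom(H,\{\pm1\})$, which by Lemma~\ref{lem:characters} can be indexed by divisor vectors $a\in(\F_2)^r$ modulo $\im\lambda_2^\vee$.

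Next, the model. $\RZ_K$ is a finite cubical complex with one cube $(D^1)^\sigma\times\{\pm1\}^{[r]\setminus\sigma}$ for each $\sigma\in K$ and each choice of signs, and $G$ permutes these cubes. I will triangulate it by the barycentric subdivision, whose simplices are chains of cubes, subdividing once more if the action is not yet free on simplices. All of this is a finite enumeration, and the $H$-action is given explicitly by coordinate sign changes. I then choose one representative simplex in each $H$-orbit and record, for each simplex, the group element carrying the representative to it. This gives integer matrices for $\partial$ on $C_*(Y')$ as matrices over $\Z[H]$, and for $\Delta_{\mathrm{AW}}$ in the same orbit basis, by front-face/back-face enumeration.

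Applying $\Hom_{\Z[H]}(-,\Z_\chi)$ means substituting $h\mapsto\chi(h)\in\{\pm1\}$ in these matrices. This yields finite integer matrices for the differential of $C^*_\chi(\Sigma)$ and for the pairing $C^*_\chi\otimes C^*_\psi\to C^*_{\chi\psi}$. By Proposition~\ref{prop:signedring}, these compute the twisted cohomology and its cup products. The four items then follow in turn.
\begin{enumerate}[(i)]
\item Smith normal form of the differential matrices gives each group together with explicit cocycle representatives and a normal-form map from cocycles to classes.
\item Products are computed by evaluating the pairing matrix on the representative cocycles and reducing the result by the normal-form map.
\item The reduction $r_\chi$ is entrywise reduction modulo $2$ of cocycles, followed by the normal form over $\F_2$.
\item The twisted Bockstein $\beta_\chi$ is computed by the snake construction for \eqref{eq:topbocksteinsequence}. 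Lift an $\F_2$-cocycle to an integer cochain, apply $\delta_\chi$, divide by $2$ (this is exact, since the result is even), and take the class.
\end{enumerate}
Every step uses finitely many finite matrices, so the procedure terminates.

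The main obstacle I expect is making the subdivision step genuinely algorithmic with a free simplicial action and coherent orientations. Concretely, ordered simplices are needed for Alexander--Whitney, so I will fix the vertex order by dimension of the barycenter's cube. That order is $G$-invariant, which makes the action order-preserving and hence makes $\Delta_{\mathrm{AW}}$ strictly equivariant. Freeness of $H$ on simplices then follows from Lemma~\ref{lem:freeness}, since an element fixing a simplex of this ordered subdivision fixes its vertices and hence fixes points. This choice of vertex order is the point I would check most carefully.
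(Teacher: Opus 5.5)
Your proposal is correct and follows the same route as the paper. Both arguments use the finite combinatorial model $Y'$ with its $H$-action and Alexander--Whitney diagonal, Smith normal form for the groups, evaluation of the pairing matrices on cocycle representatives for products, entrywise reduction modulo $2$ for $r_\chi$, and lifting a cocycle, taking its coboundary and dividing by $2$ for $\beta_\chi$. Your extra check is the detail the paper delegates to Proposition~\ref{prop:equivdiag}, and it is sound: ordering the vertices of the order complex by cell dimension is $G$-invariant, so the diagonal is strictly equivariant and freeness on simplices follows from Lemma~\ref{lem:freeness}.
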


\begin{proof}
The simplicial complex, the finite group action, the matrices of
\eqref{eq:signedcomplex}, and the Alexander--Whitney diagonal are finite and
combinatorially determined.  Smith normal form computes kernels, images, and
cokernels over $\Z$.  Lifting an $\F_2$ cocycle to an integral signed cochain and
dividing its coboundary by $2$ computes \eqref{eq:topbockstein}.
\end{proof}

\subsection{The matrices produced by the fan}

We make the word ``algorithm'' literal.  Give the fan by the integer ray matrix
$V=(v_1\ \cdots\ v_r)$ and the list of its maximal cones.  The faces of $K$ are then
obtained by taking subsets of the maximal cones, and row reduction of $V\bmod2$
computes a basis of $H=K_0$ and the character table of
Lemma~\ref{lem:characters}.

The open cubical cells of $Y=\RZ_K$ are indexed by pairs
\begin{equation}
 e(I,\varepsilon),\qquad
 I\in K,\quad \varepsilon\in\{\pm1\}^{[r]\setminus I},
 \label{eq:cubicalcells}
\end{equation}
where the coordinates in $I$ are open intervals and the remaining coordinates are
vertices.  Face incidence is obtained by replacing an interval coordinate by either
endpoint.  Thus the face poset, its barycentric subdivision $Y'$, and the $H$-action
are finite data obtained by comparisons and additions in $\F_2$.

Choose an ordered representative set $\mathcal S_d$ for the $H$-orbits of oriented
$d$-simplices of $Y'$.  If $s\in\mathcal S_d$, write its $j$th oriented face uniquely as
\begin{equation}
 \partial_js=\varepsilon_{s,j}h_{s,j}\,\overline{\partial_js},
 \qquad \varepsilon_{s,j}\in\{\pm1\},\quad h_{s,j}\in H,\quad
 \overline{\partial_js}\in\mathcal S_{d-1}.
 \label{eq:transport}
\end{equation}
For a character $\chi$, the signed boundary matrix is
\begin{equation}
 (B_d^\chi)_{t,s}
 =\sum_{\overline{\partial_js}=t}(-1)^j
   \varepsilon_{s,j}\chi(h_{s,j}),
 \qquad t\in\mathcal S_{d-1},\ s\in\mathcal S_d,
 \label{eq:signedmatrix}
\end{equation}
and the cochain differential is its transpose.  Formula
\eqref{eq:signedmatrix} is independent of the chosen orbit representatives up to
diagonal changes of basis by signs.

The Alexander--Whitney formula
\begin{equation}
 [c_0<\cdots<c_d]\longmapsto
 \sum_{i=0}^d[c_0<\cdots<c_i]\otimes[c_i<\cdots<c_d]
 \label{eq:AWmatrix}
\end{equation}
together with the transports \eqref{eq:transport} gives integer structure matrices
for every cochain pairing $C^p_\chi\otimes C^q_\psi\to C^{p+q}_{\chi\psi}$.
Finally, if $u$ is an $\F_2$-cocycle and $\widetilde u$ is its $0$--$1$ lift, then
\begin{equation}
 \beta_\chi[u]
 =\left[\frac{(B_{p+1}^\chi)^t\widetilde u}{2}\right].
 \label{eq:matrixbockstein}
\end{equation}
The numerator is even because $u$ is a cocycle modulo $2$.  If an integral lift
$\widetilde u$ is replaced by $\widetilde u+2v$, the displayed cocycle changes by
$(B_{p+1}^\chi)^tv$, which is the coboundary of $v$.  If the mod-$2$ cocycle $u$ is
changed by a coboundary, lift the bounding cochain integrally; the same calculation,
using $d^2=0$, again changes the displayed cocycle by an integral coboundary.  Hence
\eqref{eq:matrixbockstein} is independent of both choices.

Because every sign becomes $1$ modulo $2$, the reductions
$C^*_{\chi}(\Sigma)\otimes\F_2$ are canonically the same cochain complex; denote it by
$\overline C^*(\Sigma)$.  The cycle map needed below is also visible on the same
transport table.  Write $\ell_i:H=K_0\to\F_2$ for the restriction of the $i$th coordinate of
$(\Z/2)^r$.  For an edge $e\in\mathcal S_1$, let $h_{e,0},h_{e,1}\in H$ be the two
endpoint transports in \eqref{eq:transport}, and define
\begin{equation}
 \omega_i(e)=\ell_i(h_{e,0}+h_{e,1})\in\F_2.
 \label{eq:divisorcocycle}
\end{equation}
Here the signs $\varepsilon_{e,j}$ disappear modulo $2$.

\begin{proposition}
\label{prop:explicitcl2}
Each $\omega_i$ is a cocycle.  If $D_i$ is the invariant prime divisor associated
with $\rho_i$, then
\[
 [\omega_i]=\operatorname{cl}_2[D_i]
 \quad\text{in}\quad H^1(\overline C^*(\Sigma)).
\]
Consequently the substitution
\begin{equation}
 \operatorname{cl}^{\mathrm{fan}}_2:
 \Ch^*(X_\Sigma)=
 \frac{\F_2[x_1,\ldots,x_r]}{\overline I_{\mathrm{SR}}+\overline J_\Sigma}
 \longrightarrow H^*(\overline C^*(\Sigma)),
 \qquad x_i\longmapsto[\omega_i],
 \label{eq:fancl2}
\end{equation}
is the mod-$2$ real cycle class isomorphism.  On cochains, the image of a polynomial
is obtained by substituting the vectors \eqref{eq:divisorcocycle} and evaluating its
monomials with the Alexander--Whitney table \eqref{eq:AWmatrix}.
\end{proposition}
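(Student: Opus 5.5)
We prove the three assertions in order: the cocycle property, the identification $[\omega_i]=\operatorname{cl}_2[D_i]$, and the ring isomorphism. The guiding observation is that $\omega_i$ is the cocycle that classifies a double cover. Let $\chi_i:=(-1)^{\ell_i}$ be the character of $H$, and let $\eta_i:H\to\F_2$ be $\ell_i$ regarded additively. The regular cover $Y'\to Y'/H$ is classified by the tautological class in $H^1(Y'/H;H)$. On an edge $e$, this class is represented by the difference of the endpoint transports, which modulo $2$ is the sum $h_{e,0}+h_{e,1}$. Pushing it forward along $\ell_i$ gives exactly $\omega_i$.

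For the cocycle property, we compute $\delta\omega_i$ on a $2$-simplex $s\in\mathcal S_2$. Its three edges, transported to representatives, carry vertex transports, and these telescope around the boundary of $s$ because $H$ acts freely and the transports in \eqref{eq:transport} compose. The sum therefore vanishes after applying the homomorphism $\ell_i$. Equivalently, $\omega_i$ is the image of the classifying cocycle of the $H$-torsor under $\ell_i$, so $[\omega_i]=w_1$ of the real line bundle associated with $\chi_i$. Via Lemma~\ref{lem:characters}, $\chi_i=\chi_{\mathcal O(D_i)}$, because the divisor vector of $D_i$ is $e_i$. Hence $[\omega_i]=w_1(\mathcal O(D_i)(\mathbb R))$.

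Next we invoke the real cycle class theorem \cite{RealCycle}. The mod-$2$ real cycle class map is multiplicative and is compatible with first Chern classes: $\operatorname{cl}_2(c_1(L))=w_1(L(\mathbb R))$, the standard Borel--Haefliger property. Since $[D_i]=c_1(\mathcal O(D_i))$, we get $[\omega_i]=\operatorname{cl}_2[D_i]$. The ring $\Ch^*(X_\Sigma)$ is generated by the $x_i$ by \eqref{eq:DJ}, and $\operatorname{cl}_2$ is a ring homomorphism. It follows that the substitution \eqref{eq:fancl2} is well defined, in the sense that the relations $\overline I_{\mathrm{SR}}+\overline J_\Sigma$ die, and that it coincides with $\operatorname{cl}_2$. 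That $\operatorname{cl}_2$ is an isomorphism for cellular varieties is again part of the cited real-cycle theory, via the affine cells of Proposition~\ref{prop:cellular}: real points of affine cells give a cell decomposition of $X_\Sigma(\mathbb R)$ with matching mod-$2$ cell counts. Alternatively, one can quote the classical Davis--Januszkiewicz computation $H^*(Y/H;\F_2)\cong\F_2[x_i]/(\overline I_{\mathrm{SR}}+\overline J_\Sigma)$ with $x_i\mapsto w_1$, which already yields bijectivity. The statement about cochains then follows from Proposition~\ref{prop:equivdiag}, because the Alexander--Whitney product computes the cup product.

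The main obstacle is the identification of $\omega_i$ with the pullback of the torsor class along $\ell_i$. The transports in \eqref{eq:transport} are defined relative to chosen orbit representatives, so we must check that the edge formula is the \v{C}ech-type cocycle of the cover computed in that gauge. Changing representatives alters $\omega_i$ by the coboundary of the vertex function $\ell_i(h_v)$, and this changes neither the class nor the cocycle property. We also need the matching of the sign convention with the identification $(\RZ_K\times\mathbb R)/K_0$ in Lemma~\ref{lem:characters}. Modulo $2$ the sign convention is harmless, since $h=h^{-1}$ in $H$.
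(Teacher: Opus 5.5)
Your proposal is correct and follows the paper's own route. You identify $\omega_i$ as $\ell_i$ applied to the transition cocycle of the cover $Y'\to Y'/H$, hence as $w_1$ of the real line bundle attached to $(-1)^{\ell_i}$, which Lemma~\ref{lem:characters} identifies with $\mathcal O(D_i)(\mathbb R)$; you then use multiplicativity of $\operatorname{cl}_2$ together with the Hornbostel--Wendt--Xie--Zibrowius cellular comparison (or Davis--Januszkiewicz) for the isomorphism. The differences are only cosmetic: you check the cocycle condition by an explicit telescoping sum, and you invoke $\operatorname{cl}_2(c_1(L))=w_1(L(\mathbb R))$ where the paper uses the transverse coordinate section and the Thom definition of the cycle class, which is the same fact.
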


\begin{proof}
Fix the representative in $\mathcal S_0$ as a lift of each vertex of
$Y'/H$.  Along an edge $e$, the chosen lifts of its endpoints differ from the two
endpoints of the representative lift of $e$ by $h_{e,0}$ and $h_{e,1}$.  Thus
$h_{e,0}+h_{e,1}$ is the transition cocycle of the principal $H$-cover
$Y'\to Y'/H$.  Composition with $\ell_i$ is therefore a simplicial cocycle and
represents the first Stiefel--Whitney class of
\[
 Y'\times_H\mathbb R_{(-1)^{\ell_i}}\longrightarrow Y'/H.
\]
In the Cox quotient this associated line bundle is the real line bundle underlying
$\mathcal O(D_i)$ (replacing it by its dual does not change $w_1$).  If the chosen lift
of a quotient vertex $v$ is changed by $b_v\in H$, then $\omega_i$ changes by the
coboundary of the $0$-cochain $v\mapsto\ell_i(b_v)$.  Thus its cohomology class is
independent of all vertex and simplex representatives.  The coordinate section is
transverse and has zero locus $D_i(\mathbb R)$, so the Thom definition of the mod-$2$
cycle class gives $[\omega_i]=\operatorname{cl}_2[D_i]$.

The cycle class is multiplicative.  Hence the linear and Stanley--Reisner relations
in \eqref{eq:DJ} map to zero and \eqref{eq:fancl2} is well defined.  The ordinary
mod-$2$ real cycle map is a ring isomorphism for a smooth cellular variety by
\cite[Proposition~5.3]{RealCycle}; Proposition~\ref{prop:cellular} verifies its
hypotheses for $X_\Sigma$.
Finally, the simplicial cup product is precisely the transported
Alexander--Whitney formula, which proves the last assertion.  Equivalently, the
small-cover presentation of Davis--Januszkiewicz identifies the same classes as the
Stiefel--Whitney classes of the characteristic line bundles \cite[Theorem~4.14]{DavisJanuszkiewicz}.
\end{proof}

\begin{proposition}
\label{prop:certificate}
Put
\[
 q_\Sigma=\sum_{I\in K}2^{r-|I|}.
\]
The cubical face poset has $q_\Sigma$ elements, and its order complex has at most
$\sum_{j=1}^{n+1}q_\Sigma^j$ simplices.  Consequently the matrices
\eqref{eq:signedmatrix} and \eqref{eq:AWmatrix} can be enumerated after finitely many
operations, for all $2^{r-n}$ characters.  Formula \eqref{eq:divisorcocycle} adds
$r\lvert\mathcal S_1\rvert$ bits and makes the mod-$2$ cycle map explicit.  Smith
certificates for the matrices, together with the transport and divisor tables, certify
every additive group, product, reduction map, cycle map, and Bockstein occurring in
Theorem~\ref{thm:main}.
\end{proposition}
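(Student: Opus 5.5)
The count is a direct enumeration of the cubical cells in \eqref{eq:cubicalcells}. For each face $I\in K$, the open cells $e(I,\varepsilon)$ are indexed by the sign vectors $\varepsilon\in\{\pm1\}^{[r]\setminus I}$, so there are exactly $2^{r-|I|}$ of them. Distinct pairs $(I,\varepsilon)$ give distinct open cells, since a cell is determined by which coordinates are open intervals and by the vertex values of the remaining coordinates. Summing over $I\in K$, including $I=\emptyset$ (which gives the $2^r$ vertices), yields $q_\Sigma$ elements of the face poset.

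For the order complex, the dimension of $e(I,\varepsilon)$ is $|I|$, and every face of $K$ has at most $n$ elements because $\Sigma$ is simplicial of dimension $n$. A strictly increasing chain in the face poset therefore has length at most $n+1$. Each chain is a $j$-element subset of the poset, and there are at most $q_\Sigma^j$ such subsets. This gives the bound $\sum_{j=1}^{n+1}q_\Sigma^j$, a crude but sufficient finiteness estimate.

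If the equivariant subdivision of Proposition~\ref{prop:equivdiag} requires a second subdivision, I would either observe that a barycentric subdivision of a cubical complex whose action is by coordinate sign changes already has no inversions, or count the second subdivision by the same chain argument. The second option only increases the bound by a further finite, explicitly computable amount. I expect this to be the only delicate point. My first attempt would be to show that $h\in H$ fixing a chain of cells fixes each cell pointwise, because $h$ preserves dimension and hence preserves each cell of the chain. Freeness (Lemma~\ref{lem:freeness}) then forces $h=1$, so no second subdivision is needed.

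The number of characters is $|\Hom(K_0,\{\pm1\})|=2^{\dim K_0}=2^{r-n}$, since $\lambda_2$ is surjective by the observation before Proposition~\ref{prop:cellular}; this also agrees with Lemma~\ref{lem:characters}. For each character, \eqref{eq:signedmatrix} is a finite sum over the faces of representative simplices. The transports \eqref{eq:transport} are found by comparing a face with the finitely many orbit representatives in $\mathcal S_{d-1}$ under the $H$-action. The Alexander--Whitney matrices \eqref{eq:AWmatrix} likewise need only finitely many splittings of chains followed by transports. Formula \eqref{eq:divisorcocycle} stores one bit $\ell_i(h_{e,0}+h_{e,1})$ for each $i\le r$ and each $e\in\mathcal S_1$, so it adds $r|\mathcal S_1|$ bits, and Proposition~\ref{prop:explicitcl2} shows this data determines the mod-$2$ cycle map.

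For certification, a Smith normal form $B=PDQ$ with unimodular $P,Q$ can be checked by a single matrix multiplication and two determinant computations. Such a factorization certifies the kernels, images, and cokernels, hence every group $H^p(C^*_\chi)$ by Corollary~\ref{cor:effective}. Products are certified by the Alexander--Whitney structure matrices, reductions by reducing the matrices mod $2$, and Bocksteins by \eqref{eq:matrixbockstein}. Combined with Theorem~\ref{thm:fibre}, the real-cycle identification of $A_{\mathfrak r}$, and Proposition~\ref{prop:explicitcl2}, these certificates cover every datum that Theorem~\ref{thm:main} uses.
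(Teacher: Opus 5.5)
Your proposal is correct and follows the paper's route. You count cells directly from \eqref{eq:cubicalcells}, bound chain length by $\dim Y=n$, and reduce everything else to finite integer and mod-$2$ linear algebra with unimodular Smith certificates, using the transports made unique by Lemma~\ref{lem:freeness}. Your extra check that one barycentric subdivision already has no inversions is a useful supplement the paper leaves implicit, but one step needs rewording. An element of $H$ that preserves a cell need not fix it pointwise. What it does fix is the cell's center, and that fixed point is enough for freeness to force the element to be trivial.
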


\begin{proof}
Formula \eqref{eq:cubicalcells} gives the first count.  A chain in the face poset has
length at most $n+1$, because $Y$ has dimension $n$, and the deliberately coarse
bound follows.  Freeness from Lemma~\ref{lem:freeness} makes every transport in
\eqref{eq:transport} unique.  Equations \eqref{eq:signedmatrix},
\eqref{eq:AWmatrix}, \eqref{eq:divisorcocycle}, and \eqref{eq:matrixbockstein} then
reduce all assertions to finite integer or mod-$2$ linear algebra.  Smith normal form supplies unimodular
change-of-basis matrices, so the output is checkable rather than merely existential.
\end{proof}

\begin{proposition}
\label{prop:executablecertificate}
The enumeration in Proposition~\ref{prop:certificate} can be encoded by the following
finite object $\mathfrak C_\Sigma$:
\begin{enumerate}[(i)]
\item a basis of $K_0$, the cubical cells \eqref{eq:cubicalcells}, and ordered orbit
representatives $\mathcal S_d$;
\item every face transport \eqref{eq:transport} and every signed matrix
$B_d^\chi$;
\item for every $p+q\leq n$, the front- and back-face transports in
\eqref{eq:AWmatrix}.
\item the $r$ divisor cocycles \eqref{eq:divisorcocycle}, together with the linear
and minimal-nonface relations they satisfy modulo coboundaries.
\end{enumerate}
This object reconstructs all cochain complexes and cochain products in
Theorem~\ref{thm:main}.  Validity relative to the input fan is decidable: first
re-enumerate (i)--(iv) from the rays and cones and require equality with every stored
field; then check the finite integer identities consisting of the
dimension checks, $B_{d-1}^\chi B_d^\chi=0$ for every $d$ and $\chi$, and
\[
 d(a\smile b)=da\smile b+(-1)^{|a|}a\smile db
\]
for every pair of characters and every pair of basis cochains.  In addition, one checks
that the vectors in (iv) are cocycles, that every row of the characteristic matrix gives
a coboundary, and that the product indexed by every minimal nonface is a coboundary.
Reduction modulo $2$, substitution in \eqref{eq:fancl2}, and formula
\eqref{eq:matrixbockstein} then reconstruct the cycle maps, reductions, and Bocksteins;
Smith transformations against the stored matrices certify the resulting cohomology groups.
\end{proposition}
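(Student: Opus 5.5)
The plan is to show that $\mathfrak C_\Sigma$ is a finite object, that it determines all the cochain-level data of Theorem~\ref{thm:main}, and that checking it against the fan is a finite integer computation. \emph{Finiteness and reconstruction.} Each field (i)--(iv) is a finite list of integers or bits, with sizes bounded by Proposition~\ref{prop:certificate}. Items (i) and (ii) give the orbit representatives $\mathcal S_d$ and the transports \eqref{eq:transport}. From these, \eqref{eq:signedmatrix} gives every complex $C^*_\chi(\Sigma)$ directly; Proposition~\ref{prop:signedring} identifies its cohomology with $H^*(Y/H;\Z_\chi)$. Item (iii) gives the front- and back-face transports of \eqref{eq:AWmatrix}, and hence the structure constants of every pairing $C^p_\chi\otimes C^q_\psi\to C^{p+q}_{\chi\psi}$. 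The bound $p+q\le n$ suffices because the order complex has dimension at most $n$. Reduction modulo $2$ is the canonical identification with $\overline C^*(\Sigma)$. The Bockstein is computed by \eqref{eq:matrixbockstein}, which was already shown to be independent of choices. The cycle map is obtained by substituting item (iv) into \eqref{eq:fancl2}, which is legitimate by Proposition~\ref{prop:explicitcl2}.

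\emph{Decidability.} I would split validity into two layers. First, re-enumeration: by Corollary~\ref{cor:effective} and the explicit description \eqref{eq:cubicalcells}, there is a deterministic procedure $E$ that takes the ray matrix and maximal cones to (i)--(iv). To make $E$ a function, fix canonical orderings, for example lexicographic orders on cells and on orbit representatives. Equality of stored fields with $E(\Sigma)$ is then a finite comparison, and it alone certifies that $\mathfrak C_\Sigma$ is the intended object. Second, the listed identities are redundant consistency checks, each a finite integer matrix identity: $B_{d-1}^\chi B_d^\chi=0$, the Leibniz rule on basis cochains, cocycle conditions, and coboundary conditions. A statement such as ``the product indexed by a minimal nonface is a coboundary'' is checked by solving a linear system over $\F_2$, or over $\Z$ via Smith normal form, and recording the solution as a witness. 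Each check is a property that the true object has. For the Leibniz rule this follows from Proposition~\ref{prop:equivdiag}, since Alexander--Whitney is a chain map. The cocycle, linear-relation and Stanley--Reisner checks follow from Proposition~\ref{prop:explicitcl2}, because the relations of \eqref{eq:DJ} map to zero. So a correct $\mathfrak C_\Sigma$ passes every check, and every check terminates.

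\emph{Main obstacle.} The delicate point is that $E$ must be canonical. Orbit representatives, simplex orientations and vertex lifts all involve choices. The sign ambiguity noted after \eqref{eq:signedmatrix}, and the coboundary ambiguity of $\omega_i$ in Proposition~\ref{prop:explicitcl2}, would otherwise make ``equality with every stored field'' ill-posed. I would resolve this by fixing a total order on the barycentric vertices, choosing the lexicographically least simplex in each $H$-orbit, and orienting simplices by the vertex order. Freeness (Lemma~\ref{lem:freeness}) makes each transport $h_{s,j}$ unique, so $E$ is well defined. Once this is in place, the remaining claims follow from Smith normal form certificates, whose unimodular matrices can be checked by multiplication against the stored matrices.
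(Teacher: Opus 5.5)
Your proposal is correct and follows essentially the same route as the paper. Both arguments rest on the same ingredients: finiteness from Proposition~\ref{prop:certificate}, differentials from \eqref{eq:signedmatrix}, products from the front- and back-face transports in (iii), the cycle map and its relation checks from Proposition~\ref{prop:explicitcl2}, and Smith normal form for everything else. Your canonicalization of the re-enumeration usefully makes explicit what the paper leaves implicit in ``ordered orbit representatives.'' If you take your total vertex order to extend the face-poset order, the orientations coincide with the chain order in \eqref{eq:AWmatrix}, the $H$-action preserves them, and every $\varepsilon_{s,j}$ equals $1$.
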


\begin{proof}
Items (i)--(iv) are finite by Proposition~\ref{prop:certificate} and
Proposition~\ref{prop:explicitcl2}.  Formula
\eqref{eq:signedmatrix} reconstructs every differential.  For a simplex
$[c_0<\cdots<c_{p+q}]$, item (iii) identifies the orbit representatives and deck
transports of $[c_0<\cdots<c_p]$ and $[c_p<\cdots<c_{p+q}]$; evaluating the two
characters on those transports is exactly the Alexander--Whitney product.  Hence the
listed identities are precisely the chain-complex and derivation axioms on basis
elements.  Proposition~\ref{prop:explicitcl2} proves that (iv) is the required cycle
map; the stated relation tests verify its factorization through the displayed Chow
presentation.  The remaining assertions follow from \eqref{eq:matrixbockstein} and
Smith normal form.
\end{proof}

\paragraph{Supplementary verification.}
The supplementary material contains a zero-dependency implementation of the finite
fan-to-cochain construction and a checker that independently reconstructs its output
from the rays and cones.  It verifies $d^2=0$, the cochain Leibniz identity, and the
linear and Stanley--Reisner relations for the divisor cocycles; a negative test confirms
that altered fan data with unchanged matrices are rejected.  The current reference
files certify the fan-to-cochain and divisor-cycle layers.  They do not store
unimodular Smith transformations or a normalized presentation of the final fibre
product, which belong to the terminating algorithm proved above.  None of these
computations is used as a premise in the proofs.

\subsection{Real closed base fields}

For $R=\mathbb R$, the quotient $Y/H$ has the homotopy type of
$X_\Sigma(\mathbb R)$.  For a non-Archimedean real closed field, ordinary singular
cohomology is not the correct target.  We use semialgebraic cohomology in the sense of
Delfs \cite{Delfs}.

\begin{proposition}
\label{prop:semialgebraic}
Let $R$ be a real closed field and let $X_\Sigma/R$ be obtained from the fixed smooth
projective integral fan $\Sigma$.  Fix the rational barycentric triangulation and the
simplex representatives used to define $C^*_{\chi}(\Sigma)$.  For every character
$\chi$ there are induced isomorphisms, compatible with cup products, reductions, and Bocksteins,
\[
 H^p_\sa(X_\Sigma(R),\Z_\chi)
 \cong H^p(C^*_\chi(\Sigma)).
\]
In particular, these semialgebraic cohomology groups and their products are independent
of the chosen real closed field.
\end{proposition}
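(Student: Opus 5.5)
Over $R=\mathbb R$ the statement follows from Proposition~\ref{prop:signedring}: $Y/H\simeq X_\Sigma(\mathbb R)$, and semialgebraic cohomology of a compact semialgebraic set agrees with singular cohomology. For a general real closed $R$ we will build a semialgebraic model that is defined over the field of real algebraic numbers $R_0=\overline{\mathbb Q}\cap\mathbb R$, and then transfer along $R_0\subset R$. The plan has three steps.

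\emph{Step 1: a $K_0$-equivariant semialgebraic identification.} We realise $\RZ_K$ as a closed semialgebraic subset of $R^r$, namely $[-1,1]^K$ with the polyhedral-product condition. The triangulation $Y'$ is rational because it is a barycentric subdivision of rational cubes. Its vertices are rational points, so the same simplicial complex gives a $K_0$-equivariant semialgebraic triangulation over every real closed field. Next we use the Cox quotient together with the small-cover comparison (Franz, \cite[Corollary~4.6]{Franz}) in its semialgebraic form. Concretely, the moment map onto the rational polytope $P$ dual to $\Sigma$ defines, over $R$, a semialgebraic homeomorphism $X_\Sigma(R)\cong(P\times(\Z/2)^n)/\!\sim$. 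This homeomorphism comes from explicit rational formulas, namely the real moment map with squared coordinates. The quotient on the right is identified with $\RZ_K/K_0$ through the standard face-wise identification. Lemma~\ref{lem:characters} then transports the real line bundle $L(R)$ to $(\RZ_K\times R)/K_0$, and so it transports $\Z_{\chi_L}$ to the sign local system.

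\emph{Step 2: cohomology of a triangulated space.} For a finite simplicial complex realised semialgebraically over $R$, Delfs' theory \cite{Delfs} identifies semialgebraic sheaf cohomology with coefficients in a locally constant sheaf with simplicial cohomology with local coefficients. We apply this on the quotient complex $Y'/H$, taking the local system to be $\Z_\chi$. Together with the argument of Proposition~\ref{prop:signedring}, this gives $H^p_\sa(X_\Sigma(R),\Z_\chi)\cong H^p(C^*_\chi(\Sigma))$. The isomorphism is natural in the coefficient module. It therefore commutes with reduction $\Z_\chi\to\F_2$ and with the connecting maps of \eqref{eq:topbocksteinsequence}, which gives the compatibility with Bocksteins.

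\emph{Step 3: cup products.} The semialgebraic cup product is the sheaf-theoretic product attached to the pairing $\Z_\chi\otimes\Z_\psi\to\Z_{\chi\psi}$. To compare it with the transported Alexander--Whitney product of Proposition~\ref{prop:equivdiag}, we use that Delfs' comparison is multiplicative for simplicial complexes, by the same acyclic-models argument as in the classical case. Field independence then follows because the right-hand complexes involve no field at all. Equivalently, the base-change isomorphisms $H_\sa(-_{R_0})\to H_\sa(-_{R})$ are compatible with all of these structures.

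The main obstacle is Step~1 over a non-Archimedean field. The comparison of $X_\Sigma(R)$ with the small cover is usually proved topologically, so we must check that the moment-map homeomorphism and the gluing are semialgebraic. We must also check that the inverse is continuous in the semialgebraic sense, which needs properness; we would use compactness via closed and bounded sets. If this direct verification becomes awkward, the fallback is to prove the case $R_0$ and invoke the semialgebraic transfer principle: semialgebraic homeomorphism types and cohomology are invariant under extension of real closed fields. That reduces everything to the $\mathbb R$ case, where the result is classical.
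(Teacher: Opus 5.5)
Your proposal is correct and follows essentially the same route as the paper. You use a rational cubical/barycentric model of $\RZ_K$ with free $K_0$-action, the squared-coordinate real moment map identifying $X_\Sigma(R)$ with the small cover and hence with $\RZ_K/K_0$, and Delfs' comparison of simplicial and semialgebraic cohomology with local coefficients; naturality in the coefficients gives reductions and Bocksteins, and field-independence comes from the $R$-free cochain matrices. The paper handles your ``main obstacle'' as your fallback suggests: it transfers bijectivity of the moment map, covering by sign translates, and the face stabilizers as first-order statements from $\mathbb R$ to $R$, and gets continuity of the inverse from definable compactness (closed and bounded sets).
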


\begin{proof}
Projectivity realizes $\Sigma$ as the normal fan of a lattice polytope $P$.  Put
\[
 Y_R=(D^1_R,S^0_R)^K,\qquad D^1_R=[-1,1]_R,\quad S^0_R=\{-1,1\}.
\]
This is a finite cubical semialgebraic complex defined over $\mathbb Q$, and the proof
of Lemma~\ref{lem:freeness} applies over every real closed field, so that $K_0$ acts
freely on $Y_R$.  If $F(p)$
is the smallest face containing $p\in P(R)$, let $\overline G_{F(p)}\subseteq G/K_0$
be the image of the subgroup generated by the facets containing $F(p)$.  The
semialgebraic small cover is
\begin{equation}
 M_R(P,\lambda_2)=
 (P(R)\times G/K_0)/{\sim},\qquad
 (p,\bar g)\sim(p,\bar g')
 \Longleftrightarrow \bar g^{-1}\bar g'\in \overline G_{F(p)}.
 \label{eq:smallcover}
\end{equation}
The standard rational piecewise-linear moment-angle map identifies
$Y_R/K_0$ with \eqref{eq:smallcover}.  On each face it is the barycentric map to the
corresponding face of $P$ and is given by the same rational affine formulas over every
real closed field; the inverse is facewise rational piecewise linear.  This is the
small-cover construction of \cite[Sections~1--2]{DavisJanuszkiewicz} interpreted in
the semialgebraic category.

We next compare the small cover with the real toric variety without choosing
square roots in Cox orbits.  After replacing $P$ by a positive integral multiple, its
lattice points $m_1,\ldots,m_N$ give an equivariant closed embedding
$X_\Sigma\hookrightarrow\PP^{N-1}$.  Let
$X_\Sigma(R)_{\geq0}$ be the intersection with the nonnegative projective orthant,
namely the set of points admitting homogeneous coordinates $z_j\geq0$.  The formula
\begin{equation}
 \mu_R([z_1:\cdots:z_N])
 =\frac{\sum_{j=1}^N z_j^2m_j}{\sum_{j=1}^N z_j^2}
 \label{eq:realmoment}
\end{equation}
is defined over $\mathbb Q$ and takes values in $P(R)$.  Over $\mathbb R$, the
projective toric moment-map theorem identifies the restriction
$\mu_{\mathbb R,+}:X_\Sigma(\mathbb R)_{\geq0}\to P(\mathbb R)$ as a
face-preserving homeomorphism; the sign translates of the nonnegative part cover the
real toric variety, and the stabilizer above a point in the relative interior of a face
is generated by the characteristic vectors of the facets containing that face
\cite[Section~4.2]{Fulton}\cite[Sections~1--2]{DavisJanuszkiewicz}.

All sets and maps in this assertion have graphs given by finite Boolean combinations
of polynomial equalities and weak inequalities with rational coefficients.  The
statements that $\mu_{\mathbb R,+}$ is bijective, that the sign translates cover, and
that the displayed face stabilizers are exact are therefore first-order statements in
ordered fields and transfer from $\mathbb R$ to every real closed $R$.
The transferred inverse is semialgebraic and continuous: $P(R)$ is definably compact,
$X_\Sigma(R)_{\geq0}$ is semialgebraically closed in projective space, and a continuous
semialgebraic bijection from a definably compact space to a Hausdorff semialgebraic
space is a semialgebraic homeomorphism
\cite[Chapters~2 and~9]{BochnakCosteRoy}.  The same first-order stabilizer description
shows that
\[
 [p,\bar g]\longmapsto
 \bar g\cdot\mu_{R,+}^{-1}(p)
\]
is well defined precisely modulo the relation in \eqref{eq:smallcover}.  It is a
continuous semialgebraic bijection from the definably compact small cover to the
projective variety $X_\Sigma(R)$ and hence a semialgebraic homeomorphism.  Over
$\mathbb R$ this is also the real toric-space comparison of
\cite[Corollary~4.6]{Franz}.  We have therefore obtained a semialgebraic homeomorphism
\[
 Y_R/K_0\xrightarrow{\ \cong\ }X_\Sigma(R)
\]
whose defining graph is uniform over real closed fields.

Subdivide each copy of $[-1,1]$ at $0$ and take the barycentric subdivision of the
resulting cubical face poset.  Its
vertices, simplices, $K_0$-action, and quotient are all defined over $\mathbb Q$.
After interpreting the same rational affine formulas in $R$ and transporting them by
the preceding homeomorphism, it gives a finite semialgebraic triangulation of
$X_\Sigma(R)$.  A lift of a quotient simplex is
exactly an element of the representative set $\mathcal S_d$, and changing a face lift
is exactly the transport $h_{s,j}$ in \eqref{eq:transport}.  Cellular cochains with the
associated rank-one local system are therefore, on the nose, the matrices
$C^*_\chi(\Sigma)$.  The semialgebraic triangulation theorem with local coefficients
and the comparison of simplicial with semialgebraic cohomology identify this complex
with the left side of the proposition; invariance under subdivision is proved in
\cite[Chapters~II--III]{Delfs}.  Because the finite simplicial complex and every
transport element lie in the fixed group $K_0$, the resulting matrices are literally
independent of $R$.  Hornbostel--Wendt--Xie--Zibrowius explicitly record the use of
semialgebraic cohomology for twisted real realization over arbitrary real closed fields
in \cite[Remarks~1.1 and~5.10]{RealCycle}.

The comparison is thus induced by the displayed finite cochain identification, not
merely by an abstract equality of groups.  It commutes with the tensor map
$\Z_\chi\otimes\Z_\psi\to\Z_{\chi\psi}$, with
$\Z_\chi\to\F_2$, and with the connecting homomorphism of
\eqref{eq:topbocksteinsequence}.  This proves all claimed compatibilities, not only
the abstract group isomorphism.
\end{proof}

\section{The real cycle class map and the main theorem}
\label{sec:realcycle}

Let $R$ be real closed.  For a line bundle $L$ on a smooth $R$-variety $X$ and
$j\geq0$, the real cycle class map has the form
\[
 \operatorname{cl}_R:
 H^i(X,\I^j(L))\longrightarrow
 H^i_\sa(X(R),2^j\Z(L(R))).
\]
Here $2^j\Z(L(R))$ is the subsheaf of the orientation local system whose stalks are
$2^j\Z$.  For $j\geq0$, division by $2^j$ is the canonical isomorphism
$2^j\Z(L(R))\cong\Z(L(R))$; write
$\operatorname{cl}^{\mathrm{nor}}_R=2^{-j}\operatorname{cl}_R$ for the normalized
map.  This normalization is multiplicative because
$2^j\Z\otimes2^{j'}\Z\to2^{j+j'}\Z$ is ordinary multiplication.
Over $\mathbb R$ this is singular cohomology with the orientation local system of the
real line bundle.  Hornbostel--Wendt--Xie--Zibrowius prove compatibility with cup
products in Proposition~4.3 and Corollary~4.4, compatibility with the B\"ar and
Bockstein sequences in Proposition~4.14, and the cellular isomorphism in
Theorem~5.7 \cite{RealCycle}.  Their Remarks~1.1 and~5.10 record the corresponding
semialgebraic statements over arbitrary real closed fields.

Here and below, ``cellular'' in the real-cycle comparison means a filtrable
affine-space paving.  This condition is strictly stronger than the cellular
$\mathbb A^1$-structure used by Liu--Peng \cite{LiuPeng}, whose strata may contain
torus factors.

\begin{theorem}
\label{thm:realcycle}
Let $X/R$ be smooth and cellular in the following sense: it admits a finite filtration
$\varnothing=X_{-1}\subset X_0\subset\cdots\subset X_m=X$ by closed subvarieties such
that every stratum $X_i\setminus X_{i-1}$ is an affine space over $R$.  Then, for every
line bundle $L$ and all $j\geq i\geq0$, the normalized cycle class map is an isomorphism
\begin{equation}
 H^i(X,\I^j(L))
 \xrightarrow{\ \cong\ }
 H^i_\sa(X(R),\Z(L(R))).
 \label{eq:realcycleiso}
\end{equation}
In particular this applies on the diagonal $j=i=p$.  The isomorphisms are
multiplicative for tensor products of actual line bundles.  If
$\Pic(X)$ is free and $\mathfrak r$ is a basis rigidification, they therefore assemble
to an isomorphism of $\Z\oplus(\Z/2)^\varrho$-graded rigidified rings
\[
 \bigoplus_{p,\epsilon}H^p(X,\I^p(L_\epsilon))
 \xrightarrow{\ \cong\ }
 \bigoplus_{p,\epsilon}H^p_\sa(X(R),\Z(L_\epsilon(R))),
\]
where the products on both sides use the real realization of $\mathfrak r$.
Under these isomorphisms, $\rho_L$ corresponds to coefficient reduction
$\Z(L(R))\to\F_2$, and $\beta_L$ corresponds to the twisted integral Bockstein.
\end{theorem}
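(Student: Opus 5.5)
The proof is an induction over the affine paving, transporting the cited results of Hornbostel--Wendt--Xie--Zibrowius \cite{RealCycle} along the filtration. The inductive statement is the isomorphism \eqref{eq:realcycleiso} for $X_i$, with the twist restricted to $X_i$ and all $j\geq i'$ in each degree $i'$. The closed subvarieties $X_i$ may be singular, so I would not work with $X_i$ itself. Instead I would use the open complements $U_i=X\setminus X_{i-1}$, which are smooth. Each closed stratum $S_i=X_i\setminus X_{i-1}\cong\mathbb A^{d_i}$ is smooth, closed in $U_i$, and of some codimension $c_i$.

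The first step is the localization, or Gysin, long exact sequence for $\I^j$-cohomology with twist $L$:
\[
 \cdots\to H^{i-c}(S,\I^{j-c}(L|_S\otimes\det N_S))\to H^i(U,\I^j(L))\to H^i(U\setminus S,\I^j(L))\to\cdots .
\]
Alongside it I would use the semialgebraic long exact sequence of the pair, combined with the Thom isomorphism for the normal bundle of $S(R)$. That normal bundle contributes the orientation twist $\det N_S(R)$, matching the algebraic twist. The cited compatibility results of \cite{RealCycle} state that $\operatorname{cl}_R$ commutes with pullback, with pushforward along closed immersions (including the twists), and with boundary maps. Granting this, the five lemma reduces the inductive step to two pieces: the statement for $U\setminus S$, and the statement for an affine space.

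For affine space, homotopy invariance gives $H^i(\mathbb A^d,\I^j)=0$ for $i>0$ and $H^0(\mathbb A^d,\I^j)=I^j(R)$. Over a real closed field, $I^j(R)=2^j\Z$ through the signature, and normalization identifies this with $\Z=H^0_\sa(R^d,\Z)$. All twists on $\mathbb A^d$ are trivial. The index shift $j-c\geq i-c$ keeps us in the range $j\geq i$, which is exactly why the theorem is stated for $j\geq i$. One point needs care at the base of the induction: $H^i(\mathbb A^0,\I^{j})$ for $j<0$ is $W(R)=\Z$, and this still matches.

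For the boundary terms, the hypothesis $j\geq i$ must also hold for the degrees appearing one step away in the long exact sequence. I would check that the five lemma needs only the four flanking isomorphisms together with surjectivity and injectivity at the ends. This is the step I expect to be the main obstacle. If it fails, the fallback is to cite \cite[Theorem~5.7]{RealCycle} directly, which states exactly this cellular isomorphism, together with Remarks~1.1 and~5.10 for a general real closed field.

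Multiplicativity follows from \cite[Proposition~4.3, Corollary~4.4]{RealCycle} together with the remark above that normalization is multiplicative. Real realization sends $\theta_{\epsilon,\delta}$ and square cancellation to the corresponding topological isomorphisms, namely the canonical trivialization of $\Z(Q^{\otimes2}(R))$. The sectorwise isomorphisms therefore assemble into the rigidified ring isomorphism. Finally, \cite[Proposition~4.14]{RealCycle} identifies the Bär sequence \eqref{eq:bar} with the long exact sequence of $0\to\Z(L(R))\xrightarrow{2}\Z(L(R))\to\F_2\to0$. Under normalization, the inclusion $\I^{p+1}\subset\I^p$ becomes multiplication by $2$, so $\rho_L$ corresponds to reduction and $\beta_L$ corresponds to the twisted Bockstein.
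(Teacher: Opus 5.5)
Your argument is correct in outline, but it takes a different route from the paper.

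\textbf{Comparison of routes.} You compare the algebraic and semialgebraic localization sequences stratum by stratum and apply the five lemma to $\operatorname{cl}_R$ itself. The paper never compares localization sequences across the cycle class map. It runs the cellular induction purely on the algebraic side, to show that multiplication by $\pi=\langle\!\langle-1\rangle\!\rangle$ is an isomorphism. It proves this first on $H^i(X,\overline{\I}^{\,j})$, and then, via the B\"ar sequences, on $H^i(X,\I^j(L))\to H^i(X,\I^{j+1}(L))$ for $j\geq i$. For $j$ large, the large-power comparison already makes the signature map an isomorphism. The paper then descends in $j$ along the square whose vertical arrows are $\pi$ and multiplication by $2$.

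For the isomorphism itself, the paper therefore needs only three things: algebraic localization, compatibility of $\operatorname{cl}_R$ with $\pi$, and the large-$j$ comparison. Your route avoids the large-$j$ comparison and the $\pi$-stabilization. In exchange, it needs compatibility of $\operatorname{cl}_R$ with the twisted purity (Gysin) isomorphisms, with the boundary maps, and with the semialgebraic Thom isomorphism over an arbitrary real closed field. You grant these rather than prove them. Your treatment of products, $\rho_L$ and $\beta_L$ agrees with the paper's. The paper additionally notes that signature sends $\langle-1\rangle$ to $-1$, which matches the commutativity convention.

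\textbf{The step you flag as the main obstacle.} It does go through.
\begin{enumerate}[(i)]
\item At $H^i(U,\I^j(L))$, the two terms on $U\setminus S$, in degrees $i-1$ and $i$, lie in the inductive range because $j\geq i$.
\item On $S\cong\mathbb A^d$ the stratum terms vanish outside degree $0$. The left stratum term is nonzero only if $i=c$, and then $j-c\geq0$, so it is an isomorphism.
\item The right stratum term is nonzero only if $c=i+1$, and then $j-c\geq-1$.
\end{enumerate}

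However, your remark that the Witt term ``still matches'' is not right. In the borderline case $j=i$, the right stratum term is $H^0(S,\I^{-1})=\W(R)$. Compatibility with residues forces the normalized map $\W(R)\to\Z$ to be twice the signature. To see this, note that the residue of $\langle t\rangle$ at $0$ has signature $1$, while the topological boundary records the jump of the signature across $0$, which is $2$. So this map is injective but not surjective. Injectivity is exactly what the five lemma needs in that position, so the induction closes. You should, however, keep negative indices out of every isomorphism claim.
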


\begin{proof}
Put $\overline{\I}^{\,q}=\I^q/\I^{q+1}$ and
$\pi=\langle\!\langle-1\rangle\!\rangle\in I(R)$.  Since $R$ is real closed, signature
identifies
\[
 \W(R)\xrightarrow{\ \cong\ }\Z,
 \qquad I^q(R)\xrightarrow{\ \cong\ }2^q\Z,
\]
and carries multiplication by $\pi$ to multiplication by $2$.  In particular,
multiplication by $\pi$ is an isomorphism $I^q(R)\to I^{q+1}(R)$, and it induces the
corresponding isomorphism on $\overline{\I}^{\,q}(R)$, for every $q\geq0$.

We first record the stabilization step.  Homotopy invariance, localization, and
d\'evissage along a complementary open cellular filtration give, by induction on the
number of cells,
\begin{equation}
 \pi:H^i(X,\overline{\I}^{\,j})\xrightarrow{\ \cong\ }
 H^i(X,\overline{\I}^{\,j+1})
 \qquad (j\geq i).
 \label{eq:barIstabilization}
\end{equation}
This is precisely the argument of \cite[Proposition~5.4]{RealCycle}; it uses at the
base of the cellular induction only the coefficient isomorphism over $\Spec R$ above.
Comparing the B\"ar exact sequences for consecutive powers of $\I$, and descending
from the range in which multiplication by $\pi$ is already an isomorphism of sheaves,
then gives, for every line bundle $L$,
\begin{equation}
 \pi:H^i(X,\I^j(L))\xrightarrow{\ \cong\ }
 H^i(X,\I^{j+1}(L))
 \qquad (j\geq i).
 \label{eq:Istabilization}
\end{equation}
This is \cite[Proposition~5.5]{RealCycle}.  The same proof applies over $R$: a real
closed field satisfies the coefficient hypothesis in
\cite[Remark~5.10 and Lemma~5.11]{RealCycle}, and the required eventual sheaf
stabilization follows from $\operatorname{vcd}_2(R)=0$.

For fixed $i$ and $L$, write
\[
 A_j=H^i(X,\I^j(L)),
 \qquad
 B_j=H^i_\sa(X(R),2^j\Z(L(R))).
\]
Compatibility of the signature morphism with $\pi$ gives a commutative square
\begin{equation}
\begin{array}{ccc}
 A_j&\xrightarrow{\ \operatorname{cl}_{R,j}\ }&B_j\\
 \big\downarrow{\scriptstyle\cdot\pi}&&
 \big\downarrow{\scriptstyle\cdot2}\\
 A_{j+1}&\xrightarrow{\ \operatorname{cl}_{R,j+1}\ }&B_{j+1}.
\end{array}
\label{eq:cyclebackwardsquare}
\end{equation}
The left vertical arrow is an isomorphism for $j\geq i$ by
\eqref{eq:Istabilization}; the right vertical arrow is an isomorphism because
$2^j\Z(L(R))\xrightarrow{2}2^{j+1}\Z(L(R))$ is an isomorphism of local systems.
For all sufficiently large $j$, the horizontal signature map is an isomorphism by the
large-power comparison, including arbitrary line-bundle twists, used in the proof of
\cite[Theorem~5.7(a)]{RealCycle}.  The real-closed version is obtained by replacing
singular cohomology with semialgebraic cohomology as specified in
\cite[Remarks~1.1 and~5.10]{RealCycle}.  Backward induction in
\eqref{eq:cyclebackwardsquare} therefore proves that
\[
 H^i(X,\I^j(L))\xrightarrow{\ \cong\ }
 H^i_\sa(X(R),2^j\Z(L(R)))
\]
for every $j\geq i$.  Division by $2^j$ gives \eqref{eq:realcycleiso}.

The product comparison is compatible with arbitrary line-bundle twists by
\cite[Proposition~4.3 and Corollary~4.4]{RealCycle}.  Compatibility with reduction and
the B\"ar and Bockstein connecting maps is
\cite[Proposition~4.14]{RealCycle}; the semialgebraic coefficient sequence is exactly
$0\to\Z(L(R))\xrightarrow{2}\Z(L(R))\to\F_2\to0$ after normalization.
Compatibility with the square-cancellation maps of $\mathfrak r$ follows from
functoriality in the twisting line bundle and the canonical orientation of the square
of a real line bundle.  Finally, signature sends $\langle-1\rangle$ to $-1$, so
\eqref{eq:rigidcommutativity} is carried to the Koszul sign in the topological cup
product.  This proves the rigidified-ring statement with its stated commutativity
convention.  Proposition~\ref{prop:semialgebraic} then identifies the target, including
all products and structure maps, with the fixed finite fan model.
\end{proof}

We can now state the main result without any hidden completeness, freeness, diagonal,
or Steenrod-kernel assumptions.

\begin{theorem}
\label{thm:main}
Let $R$ be a real closed field and let $X_\Sigma/R$ be a smooth projective split toric
variety.  Choose a basis rigidification $\mathfrak r$ of $\Pic(X_\Sigma)$ as in
Definition~\ref{def:rigidification}.  For
$\epsilon\in(\Z/2)^\varrho$, let $\chi_\epsilon$ be the character associated by
Lemma~\ref{lem:characters} with $L_\epsilon$, and define
\[
 A_{\Sigma,\mathfrak r}=
 \bigoplus_{p,\epsilon}H^p(C^*_{\chi_\epsilon}(\Sigma)),
\]
with multiplication induced by the equivariant Alexander--Whitney diagonal.  Define
\begin{equation}
 K^p_\epsilon(\Sigma)=
 \{z\in\CH^p(X_\Sigma):
 \beta_{\chi_\epsilon}(\operatorname{cl}_2(\bar z))=0\},
 \label{eq:topK}
\end{equation}
where $\operatorname{cl}_2=\operatorname{cl}^{\mathrm{fan}}_2$ is the explicit
mod-$2$ cycle class isomorphism \eqref{eq:fancl2} and
$\beta_{\chi_\epsilon}$ is \eqref{eq:topbockstein}.  The two structure maps used in
the fibre product below are, in the sector labelled by $\epsilon$,
\begin{equation}
\begin{aligned}
 a_\epsilon:H^p(C^*_{\chi_\epsilon}(\Sigma))&\longrightarrow\Ch^p(X_\Sigma),
 &a_\epsilon&=(\operatorname{cl}^{\mathrm{fan}}_2)^{-1}\circ r_{\chi_\epsilon},\\
 b_\epsilon:K^p_\epsilon(\Sigma)&\longrightarrow\Ch^p(X_\Sigma),
 &b_\epsilon(z)&=\bar z,
\end{aligned}
\label{eq:mainstructuremaps}
\end{equation}
where in the first line we use the canonical identification
$H^p(Y/H;\F_2)=H^p(\overline C^*(\Sigma))$ from the common reduced cochain complex.
Both targets are placed in the $\epsilon$-labelled copy of
$\Ch^p(X_\Sigma)$.  Then
\begin{equation}
 \CHW^\bullet_{\mathfrak r}(X_\Sigma,\Pic/2)
 \cong
 A_{\Sigma,\mathfrak r}
 \times_{\Ch^*(X_\Sigma)[(\Z/2)^\varrho]}
 \bigoplus_{p,\epsilon}K^p_\epsilon(\Sigma)
 \label{eq:mainfibre}
\end{equation}
as a $\Z\oplus(\Z/2)^\varrho$-graded ring.  The multiplication on the last direct
sum is ordinary Chow multiplication with addition of the formal $\epsilon$-labels.

Every group, map, and product in \eqref{eq:mainfibre} is computable by a terminating
finite algorithm from the rays and cones of $\Sigma$ and the divisor representatives
of $\mathfrak r$.  If a rigidification is not supplied, a choice of lattice coordinates
and an ordering of the rays, followed by Hermite normal form in
\eqref{eq:divisorsequence}, produces one possible ordered Picard basis and
torus-invariant divisor representatives.  This auxiliary choice is not intrinsic to
the abstract fan, and no independence of it is asserted.  The algorithm produces the
integer matrices \eqref{eq:signedmatrix} and \eqref{eq:AWmatrix}, the Bockstein
formula \eqref{eq:matrixbockstein}, the divisor vectors
\eqref{eq:divisorcocycle}, and Smith certificates; its input-dependent size
bound is Proposition~\ref{prop:certificate}, and its raw cochain-algebra output admits
the independently checkable, fan-bound deterministic certificate of
Proposition~\ref{prop:executablecertificate}.
\end{theorem}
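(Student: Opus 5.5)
The plan is to assemble Theorem~\ref{thm:main} from the pieces already in place, in the order algebraic $\to$ real $\to$ combinatorial.

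\emph{Step 1: algebraic fibre product.} Proposition~\ref{prop:cellular} shows that every $\CH^p(X_\Sigma)$ is free, and \eqref{eq:divisorsequence} shows that $\Pic(X_\Sigma)$ is free. Theorem~\ref{thm:fibre} therefore applies with the chosen $\mathfrak r$ and gives the ring isomorphism \eqref{eq:fibre},
\[
\CHW^\bullet_{\mathfrak r}\cong A_{\mathfrak r}(X_\Sigma)\times_{C_{\mathfrak r}(X_\Sigma)}K_{\mathfrak r}(X_\Sigma).
\]
Here the structure maps are $\rho_{L_\epsilon}$ and reduction modulo $2$, and $K_{\mathfrak r}$ is defined through the exact Bockstein $\beta_{L_\epsilon}$ as in \eqref{eq:exactK}.

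\emph{Step 2: replace each corner by its fan model.} Proposition~\ref{prop:cellular} supplies the affine paving required by Theorem~\ref{thm:realcycle}. That theorem identifies the rigidified ring $A_{\mathfrak r}(X_\Sigma)$ with $\bigoplus H^p_\sa(X_\Sigma(R),\Z(L_\epsilon(R)))$, and this identification is compatible with products, $\rho$, and $\beta$. Lemma~\ref{lem:characters} identifies the orientation system of $L_\epsilon(R)$ with $\Z_{\chi_\epsilon}$. Proposition~\ref{prop:semialgebraic} then identifies the semialgebraic groups with $H^p(C^*_{\chi_\epsilon}(\Sigma))$, again compatibly with cup products, reductions $r_\chi$, and Bocksteins $\beta_\chi$. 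Consequently $A_{\mathfrak r}\cong A_{\Sigma,\mathfrak r}$ as rigidified rings.

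On the mod-$2$ corner, Proposition~\ref{prop:explicitcl2} says that $\operatorname{cl}^{\mathrm{fan}}_2$ is the mod-$2$ real cycle class isomorphism. Hence $\rho_{L_\epsilon}$ becomes $a_\epsilon$ of \eqref{eq:mainstructuremaps}. For the other corner, the compatibility $\operatorname{cl}\circ\beta_L=\beta_\chi\circ\operatorname{cl}_2$ together with injectivity of $\operatorname{cl}$ in degree $p+1$ (the case $j=i=p+1$ of \eqref{eq:realcycleiso}) gives $K^p_{L_\epsilon}(X_\Sigma)=K^p_\epsilon(\Sigma)$ on the nose, with $b_\epsilon$ equal to reduction. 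Substituting these identifications into \eqref{eq:fibre} yields \eqref{eq:mainfibre}. The product on the last factor is Chow multiplication with formal labels, as in Theorem~\ref{thm:fibre}.

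\emph{Step 3: effectivity.} The main technical obstacle is making sure the rigidified products match, because $\theta_{\epsilon,\delta}$ and the square cancellation $s_Q$ must be transported to the cochain side. I would handle this as follows. After real realization, $Q^{\otimes 2}$ carries its canonical orientation, so $s_Q$ becomes the identity on $\Z_{\chi_\epsilon\chi_\delta}=\Z_{\chi_{\epsilon+\delta}}$, since characters square to $1$. Likewise $\theta$ becomes the identity on sign modules, because all degrees are zero. Theorem~\ref{thm:realcycle} already asserts this compatibility, so the Alexander--Whitney pairing of Proposition~\ref{prop:signedring} computes $\star_{\mathfrak r}$ with the Koszul sign from \eqref{eq:rigidcommutativity}.

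Termination then follows from Corollary~\ref{cor:effective} and Propositions~\ref{prop:certificate} and~\ref{prop:executablecertificate}, together with a few finite computations:
\begin{enumerate}[(i)]
\item Hermite normal form produces $\mathfrak r$ if it is not supplied.
\item Row reduction gives $K_0$ and the characters $\chi_\epsilon$.
\item The Chow ring \eqref{eq:DJ} is a finite free $\Z$-module. A Gr\"obner or monomial basis is obtainable from the affine paving count, and structure constants come from Stanley--Reisner reduction.
\item $K^p_\epsilon$ is the kernel of the integer matrix obtained by composing reduction, the substitution \eqref{eq:fancl2}, and \eqref{eq:matrixbockstein}, computed by Smith normal form.
\item The fibre product of finitely generated abelian groups over $\Ch^*$ is computed as the kernel of $a-b$ on the direct sum. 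Its products are induced componentwise.
\end{enumerate}
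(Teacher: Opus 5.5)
Your proposal is correct and follows the paper's proof step for step. It uses Proposition~\ref{prop:cellular} and Theorem~\ref{thm:fibre} for the algebraic fibre product. It then replaces each corner by its fan model via Theorem~\ref{thm:realcycle}, Lemma~\ref{lem:characters}, Proposition~\ref{prop:semialgebraic} and Proposition~\ref{prop:explicitcl2}. Effectivity comes from Corollary~\ref{cor:effective} and Propositions~\ref{prop:certificate} and~\ref{prop:executablecertificate}.

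Your remark that matching the Bockstein kernels requires injectivity of the diagonal cycle class map in degree $p+1$ is a useful addition. The paper compresses that point into the phrase ``compatibility with Bocksteins.''
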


\begin{proof}
Proposition~\ref{prop:cellular} gives cellularity and torsion-free Chow groups; in
particular, $\Pic(X_\Sigma)=\CH^1(X_\Sigma)$ is free.  Theorem~\ref{thm:fibre}
therefore identifies the rigidified total Chow--Witt ring with
$A_{\mathfrak r}(X_\Sigma)\times_{C_{\mathfrak r}(X_\Sigma)}
K_{\mathfrak r}(X_\Sigma)$.
Theorem~\ref{thm:realcycle}, Proposition~\ref{prop:semialgebraic}, and
Lemma~\ref{lem:characters} identify the first factor, its products, and its reduction
map with $A_{\Sigma,\mathfrak r}$.  Compatibility with Bocksteins identifies the algebraic group
\eqref{eq:exactK} with \eqref{eq:topK}.  This proves \eqref{eq:mainfibre}.

For effectivity, compute \eqref{eq:DJ}, \eqref{eq:picmod2}, $K_0$, and its character
group from the fan.  Read the chosen basis and invariant-divisor representatives from
$\mathfrak r$; alternatively, after making the auxiliary coordinate and ordering
choices just stated, Smith or Hermite normal form for \eqref{eq:divisorsequence}
constructs one admissible $\mathfrak r$.  Lemma~\ref{lem:freeness} and
Proposition~\ref{prop:equivdiag} give finite free signed cochain complexes with a
finite diagonal.  Corollary~\ref{cor:effective} computes their cohomology, products,
reductions, and Bocksteins.  Proposition~\ref{prop:explicitcl2} evaluates
$\operatorname{cl}_2$ on the divisor presentation by finite cup products.  Finally,
ordinary integer linear algebra computes the inverse images in \eqref{eq:topK} and the
fibre product.  The explicit enumeration
and its checkable certificates are given in Propositions~\ref{prop:certificate}
and~\ref{prop:executablecertificate}.
\end{proof}

The degree-$(0,0)$ component specializes to the expected coefficient ring in the
present toric setting.  Indeed, the characteristic vectors of the facets meeting any
vertex form a basis of $(\Z/2)^n$, so the small cover is connected.  Thus its untwisted
degree-zero cohomology is $\Z$, while $\CH^0(X_\Sigma)=\Z$ and the degree-zero
Bockstein vanishes.  The degree-zero part of \eqref{eq:mainfibre} is consequently
\[
 \Z\times_{\F_2}\Z
 =\{(s,r):s\equiv r\pmod2\}\cong\GW(R),
\]
where the last isomorphism is given by signature and rank.  This equality uses both
connectedness and the real-closed hypothesis and was not assumed in the general
discussion following Definition~\ref{def:rigidification}.

\begin{corollary}
\label{cor:sqshortcut}
In the setting of Theorem~\ref{thm:main}, fix $p$ and $\epsilon$, and put
$L=L_\epsilon$.  If reduction
\[
 H^{p+1}(C^*_{\chi_\epsilon}(\Sigma))\longrightarrow
 H^{p+1}_\sa(X_\Sigma(R);\F_2)
\]
is injective on $\im\beta_{\chi_\epsilon}$, then
\[
 K_\epsilon^p(\Sigma)=
 \{z\in\CH^p(X_\Sigma):
 (\operatorname{Sq}^2+c_1(L))\bar z=0\}.
\]
The injectivity test is itself decidable from the finite signed cochain matrices.
\end{corollary}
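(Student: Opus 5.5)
The plan is to reduce Corollary~\ref{cor:sqshortcut} to Lemma~\ref{lem:sqcriterion}, transporting its hypothesis and conclusion along the identifications of Theorem~\ref{thm:realcycle} and Proposition~\ref{prop:semialgebraic}.

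\textbf{Step 1: transport the reduction map.} By Theorem~\ref{thm:realcycle}, applied in degree $p+1$, the normalized cycle class map identifies $H^{p+1}(X_\Sigma,\I^{p+1}(L))$ with $H^{p+1}_\sa(X_\Sigma(R),\Z(L(R)))$. Under this identification, $\rho_L$ corresponds to coefficient reduction. Proposition~\ref{prop:semialgebraic} then identifies this group with $H^{p+1}(C^*_{\chi_\epsilon}(\Sigma))$, compatibly with reduction to $\F_2$. Lemma~\ref{lem:characters} ensures that the local system is $\Z_{\chi_\epsilon}$. On the mod-$2$ side, the target is $\Ch^{p+1}(X_\Sigma)$, which is identified with $H^{p+1}_\sa(X_\Sigma(R);\F_2)$ by the isomorphism $\operatorname{cl}_2$ of Proposition~\ref{prop:explicitcl2}.

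\textbf{Step 2: transport the Bockstein.} The same theorems give that $\beta_L$ corresponds to $\beta_{\chi_\epsilon}\circ\operatorname{cl}_2$. Hence $\im\beta_L$ corresponds to $\im\beta_{\chi_\epsilon}$, since $\operatorname{cl}_2$ is bijective. The topological injectivity hypothesis is therefore exactly the hypothesis of Lemma~\ref{lem:sqcriterion}.

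\textbf{Step 3: apply the lemma.} In the proof of Theorem~\ref{thm:main} it was already shown that $K^p_\epsilon(\Sigma)=K^p_L(X_\Sigma)$. Lemma~\ref{lem:sqcriterion} then yields
\[
K^p_L(X_\Sigma)=\{z:\operatorname{Sq}^2_L(\bar z)=0\},
\]
and \eqref{eq:twistedsq} rewrites $\operatorname{Sq}^2_L$ as $\operatorname{Sq}^2+c_1(L)$.

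\textbf{Step 4: decidability.} Compute $\im\beta_{\chi_\epsilon}$ by applying \eqref{eq:matrixbockstein} to a basis of mod-$2$ cohomology. This produces finitely many integral cocycles, and their classes generate the image as a group. Smith normal form presents $H^{p+1}(C^*_{\chi_\epsilon})$ as an abelian group, so one can compute the subgroup these classes generate. The reduction map $r_\chi$ is an explicit matrix, and injectivity on the image amounts to the kernel of $r_\chi$ restricted to this finitely generated subgroup being zero. That is a finite integer linear-algebra computation: compute the kernel of the composed map between the finitely presented groups.

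\textbf{Main obstacle.} The only delicate point is Step 1. One must make sure the compatibility of reductions in Theorem~\ref{thm:realcycle} and Proposition~\ref{prop:semialgebraic} holds in degree $p+1$ for the same twist. This needs care about normalization by $2^{p+1}$, which is harmless since it is an isomorphism of local systems that commutes with reduction. Everything else is formal.
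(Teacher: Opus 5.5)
Your argument is correct and matches what the paper intends. The paper gives no separate proof of this corollary: it is Lemma~\ref{lem:sqcriterion} transported along the reduction- and Bockstein-compatible isomorphisms of Theorem~\ref{thm:realcycle}, Proposition~\ref{prop:semialgebraic} and Proposition~\ref{prop:explicitcl2}, with decidability from the Smith-normal-form computations of Corollary~\ref{cor:effective}.

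Your Step~4 can be simplified. Exactness of the coefficient sequence gives $\im\beta_{\chi_\epsilon}={}_2H^{p+1}(C^*_{\chi_\epsilon}(\Sigma))$ and $\ker r_{\chi_\epsilon}=2H^{p+1}(C^*_{\chi_\epsilon}(\Sigma))$. So the test only asks that no elementary divisor in the Smith form of $H^{p+1}(C^*_{\chi_\epsilon}(\Sigma))$ be divisible by $4$.
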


\paragraph{Algorithmic form.}
The proof yields the following concrete workflow.
\begin{enumerate}[leftmargin=*,label=\textbf{Step \arabic*.}]
\item Compute $\CH^*(X_\Sigma)$ and $\Pic(X_\Sigma)/2$ from
      \eqref{eq:DJ} and \eqref{eq:picmod2}; record the ordered basis and
      invariant-divisor representatives supplied by $\mathfrak r$.  If only a coordinate
      presentation of the fan is given, first choose a ray order and use
      \eqref{eq:divisorsequence} to construct an auxiliary rigidification.
\item Construct $\RZ_K$, $K_0=\ker\lambda_2$, an invariant subdivision, and all
      character complexes \eqref{eq:signedcomplex}.
\item Compute the edge vectors $\omega_i$ from \eqref{eq:divisorcocycle}; evaluate
      $\operatorname{cl}_2$ on the Chow presentation by Alexander--Whitney products.
\item Use Smith normal form and the equivariant diagonal to compute
      $A_{\Sigma,\mathfrak r}$, all
      products, reductions, and Bocksteins.
\item Apply each $\beta_{\chi_\epsilon}$ to the computed cycle-class vectors and take the
      integral inverse images of its kernel to obtain
      $K_\epsilon^p(\Sigma)$.  Use the Steenrod shortcut only after the test in
      Corollary~\ref{cor:sqshortcut} succeeds.
\item Form the fibre product sector by sector and simplify it to generators and
      relations or to multiplication tables.
\end{enumerate}

No check that ``setting $\eta=0$ gives the Chow ring'' is used as a substitute for a
proof.  The forgetful map is built into the fibre product from the start.

\section{Hirzebruch surfaces}
\label{sec:hirzebruch}

We now execute every step for the Hirzebruch surface $\mathbb F_a$, $a\geq0$.  This
provides a complete all-twist example and verifies directly when the Steenrod shortcut
is valid.

\subsection{Fan, Chow ring, and Bockstein kernels}

Take the cyclically ordered rays
\[
 v_1=(0,1),\quad v_2=(1,0),\quad
 v_3=(0,-1),\quad v_4=(-1,a),
\]
with maximal cones $12,23,34,41$.  If $s=[D_1]$ and
$f=[D_2]=[D_4]$, then
\begin{equation}
 \CH^*(\mathbb F_a)=\Z[s,f]/(f^2,s(s+af)),
 \qquad \Pic(\mathbb F_a)/2=\{0,s,f,s+f\}.
 \label{eq:FaChow}
\end{equation}
Let $\mathfrak r_a$ be the basis rigidification associated with the ordered Picard
basis $(s,f)$ and the representatives
$\mathcal B_s=\mathcal O(D_1)$ and $\mathcal B_f=\mathcal O(D_2)$.  Thus the sector
$(\epsilon,\delta)$ is represented by
$L_{\epsilon,\delta}=\mathcal B_s^{\otimes\epsilon}\otimes
\mathcal B_f^{\otimes\delta}$, with multiplication fixed by
Definition~\ref{def:rigidification}.  All ring statements in this section use
$\mathfrak r_a$.
We use the same letters for their mod-$2$ reductions.  Thus
\[
 \Ch^*(\mathbb F_a)=
 \F_2[s,f]/(f^2,s^2+a sf).
\]

Let $w=\epsilon s+\delta f$ and $z=us+vf$ with
$\epsilon,\delta,u,v\in\F_2$.  On divisors, the motivic square is the square, so
\begin{equation}
 \operatorname{Sq}^2_w(z)
 =z^2+wz
 =\bigl(\epsilon v+(a(1+\epsilon)+\delta)u\bigr)sf.
 \label{eq:FaSq}
\end{equation}

\subsection{Deck transformations and the twist labels}

Put $\bar a=a\bmod2$.  Solving $\lambda_2(g)=0$ for the displayed ray matrix gives
\begin{equation}
 K_0=\{(p+\bar a q,q,p,q):p,q\in\F_2\}.
 \label{eq:FaK0deck}
\end{equation}
Choose the ordered basis
\[
 x=(\bar a,1,0,1),\qquad y=(1,0,1,0)
\]
and choose the fundamental-group generators below so that their monodromies in the
moment-angle cover are $x$ and $y$.  Lemma~\ref{lem:characters} gives the complete
character table
\begin{equation}
\begin{array}{c|cc|cc}
 &\chi_s(x)&\chi_s(y)&\chi_f(x)&\chi_f(y)\\ \hline
 a\text{ even}&+1&-1&-1&+1\\
 a\text{ odd}&-1&-1&-1&+1.
\end{array}
\label{tab:Facharacters}
\end{equation}
If $x^*,y^*$ are the dual mod-$2$ degree-one cellular classes, this says
\begin{equation}
 x^*=f,\qquad
 y^*=\begin{cases}s,&a\text{ even},\\s+f,&a\text{ odd}.
 \end{cases}
 \label{eq:Facochainlabels}
\end{equation}
In particular, for odd $a$ the table reads
\[
 \chi_0=(1,1),\quad \chi_s=(-1,-1),\quad
 \chi_f=(-1,1),\quad \chi_{s+f}=(1,-1)
\]
on $(x,y)$.  This fixes the twist labels in all matrices and products below; no
unrecorded permutation of $s$ and $f$ is being used.

\subsection{The four signed cellular complexes}

The real locus $M_a=\mathbb F_a(R)$ has the semialgebraic homotopy type
\[
 M_a\simeq
 \begin{cases}
 T^2,&a\text{ even},\\
 \text{the Klein bottle},&a\text{ odd}.
 \end{cases}
\]
The four characters are labelled by $w\in\{0,s,f,s+f\}$.

For the torus, use the one-vertex cell structure associated with
$\pi_1(T^2)=\langle x,y\mid xyx^{-1}y^{-1}\rangle$.  If a character takes the values
$(\alpha,\gamma)\in\{\pm1\}^2$ on $(x,y)$, the signed cochain complex is
\begin{equation}
 0\longrightarrow\Z
 \xrightarrow{d^0_{\alpha,\gamma}}\Z^2
 \xrightarrow{d^1_{\alpha,\gamma}}\Z
 \longrightarrow0,
 \label{eq:toruscomplex}
\end{equation}
where
\[
 d^0(n)=((\alpha-1)n,(\gamma-1)n),\qquad
 d^1(u,v)=(1-\gamma)u+(\alpha-1)v.
\]
Smith normal form gives
\begin{equation}
\begin{array}{c|ccc}
 a\text{ even}&H^0_w&H^1_w&H^2_w\\ \hline
 w=0&\Z&\Z^2&\Z\\
 w\ne0&0&\Z/2&\Z/2.
\end{array}
\label{tab:even-groups}
\end{equation}

For the Klein bottle, use
\[
 \pi_1(M_a)=\langle x,y\mid xyx^{-1}=y^{-1}\rangle.
\]
Fox differentiation of $xyx^{-1}y$ gives
\begin{equation}
 0\longrightarrow\Z
 \xrightarrow{d^0_{\alpha,\gamma}}\Z^2
 \xrightarrow{d^1_{\alpha,\gamma}}\Z
 \longrightarrow0,
 \label{eq:Kleincomplex}
\end{equation}
with
\[
 d^0(n)=((\alpha-1)n,(\gamma-1)n),\qquad
 d^1(u,v)=(1-\gamma)u+(\alpha+\gamma)v.
\]
By \eqref{tab:Facharacters}, the character $f$ is
$(\alpha,\gamma)=(-1,1)$; it is the orientation character, while $s=(-1,-1)$ and
$s+f=(1,-1)$.  The four Smith forms give
\begin{equation}
\begin{array}{c|ccc}
 a\text{ odd}&H^0_w&H^1_w&H^2_w\\ \hline
 w=0&\Z&\Z&\Z/2\\
 w=s&0&\Z/2&\Z/2\\
 w=f&0&\Z\oplus\Z/2&\Z\\
 w=s+f&0&\Z/2&\Z/2.
\end{array}
\label{tab:odd-groups}
\end{equation}
Here and below $H^p_w$ abbreviates
$H^p(M_a;\Z_w)\cong H^p(\mathbb F_a,\I^p(w))$.

\subsection{Reduction maps}

Modulo $2$, both differentials in \eqref{eq:toruscomplex} and
\eqref{eq:Kleincomplex} vanish.  We identify their two degree-one cochains with
$x^*,y^*$ by \eqref{eq:Facochainlabels}; direct reduction of the integral cocycles
then gives the following images.

\begin{lemma}
\label{lem:Fareductions}
In degree zero, only $H^0_0\to\F_2$ is nonzero.  Every degree-two group in
\eqref{tab:even-groups} and \eqref{tab:odd-groups} maps onto the class $sf$.  In
degree one,
\begin{equation}
\begin{array}{c|cccc}
 &0&s&f&s+f\\ \hline
 a\text{ even}&\langle s,f\rangle&\langle s\rangle&
 \langle f\rangle&\langle s+f\rangle\\
 a\text{ odd}&\langle f\rangle&\langle s\rangle&
 \langle s,f\rangle&\langle s+f\rangle.
\end{array}
\label{tab:reductionimages}
\end{equation}
\end{lemma}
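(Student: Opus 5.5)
The proof is a direct computation from the explicit complexes \eqref{eq:toruscomplex} and \eqref{eq:Kleincomplex}. Three facts drive it.
\begin{enumerate}[(i)]
\item Every sign is $\pm1\equiv1$, so all differentials vanish modulo $2$. Hence $H^p(M_a;\F_2)$ is just the cochain group $\F_2^{\binom{2}{p}}$.
\item On cohomology, $r_\chi$ is induced by reducing integral cocycles coefficientwise, so the image of $r_\chi$ in degree $p$ is the mod-$2$ reduction of $\ker d^p_{\alpha,\gamma}$.
\item I read the resulting cochain classes in $\Ch^*(\mathbb F_a)$ through $(\operatorname{cl}^{\mathrm{fan}}_2)^{-1}$, using the labels \eqref{eq:Facochainlabels}: $x^*=f$, and $y^*=s$ or $s+f$ according to the parity of $a$.
\end{enumerate}
These labels are the only geometric input. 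I take them as established by the character table \eqref{tab:Facharacters} together with Proposition~\ref{prop:explicitcl2}.

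Degree zero is immediate. Since $d^0(n)=((\alpha-1)n,(\gamma-1)n)$, the kernel is zero unless $\alpha=\gamma=1$, that is, unless $w=0$. In that case $1\in H^0_0=\Z$ reduces to $1\in\Ch^0$, so only $H^0_0\to\F_2$ is nonzero.

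For degree two, every $1$-cochain is a cocycle. Hence $H^2_w=\Z/\im d^1$ is generated by the class of the generator of $C^2=\Z$, and this class reduces to the generator of $H^2(M_a;\F_2)=\F_2$. Under $(\operatorname{cl}^{\mathrm{fan}}_2)^{-1}$ that generator is the unique nonzero class of $\Ch^2(\mathbb F_a)=\F_2\,sf$. So every degree-two reduction map is onto $\langle sf\rangle$, uniformly in $w$ and in the parity of $a$.

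Degree one is a case check of $\ker d^1_{\alpha,\gamma}$ for the four characters in each parity.

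\emph{Torus ($a$ even).} Here $d^1(u,v)=(1-\gamma)u+(\alpha-1)v$.
\begin{itemize}
\item For $w=0$, $(\alpha,\gamma)=(1,1)$, we have $d^1=0$. The image is $\langle x^*,y^*\rangle=\langle s,f\rangle$.
\item For $w=s$, $(1,-1)$, we have $d^1=2u$. The kernel is $\{u=0\}$, with image $\langle y^*\rangle=\langle s\rangle$.
\item For $w=f$, $(-1,1)$, we have $d^1=-2v$. The image is $\langle x^*\rangle=\langle f\rangle$.
\item For $w=s+f$, $(-1,-1)$, we have $d^1=2u-2v$. The kernel is $u=v$, with image $\langle x^*+y^*\rangle=\langle s+f\rangle$.
\end{itemize}

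\emph{Klein bottle ($a$ odd).} Here $d^1(u,v)=(1-\gamma)u+(\alpha+\gamma)v$.
\begin{itemize}
\item For $w=0$, $(1,1)$, we have $d^1=2v$. The image is $\langle x^*\rangle=\langle f\rangle$.
\item For $w=s$, $(-1,-1)$, we have $d^1=2u-2v$. The image is $\langle x^*+y^*\rangle=\langle f+s+f\rangle=\langle s\rangle$.
\item For $w=f$, $(-1,1)$, we have $d^1=0$. The image is $\langle f,s+f\rangle=\langle s,f\rangle$.
\item For $w=s+f$, $(1,-1)$, we have $d^1=2u$. The image is $\langle y^*\rangle=\langle s+f\rangle$.
\end{itemize}
This reproduces table \eqref{tab:reductionimages}.

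The only step that is not routine linear algebra is the identification of the cellular dual classes $x^*,y^*$ with $f$ and with $s$ or $s+f$. This requires that the one-vertex cell structure with monodromies $x,y$ be compatible with the triangulated model $\overline C^*(\Sigma)$ and the divisor cocycles $\omega_i$. I would justify it by evaluating $\omega_i$ of \eqref{eq:divisorcocycle} on the loops with monodromy $x$ and $y$. Each such evaluation is $\ell_i$ of the monodromy, which by \eqref{eq:FaK0deck} gives:
\begin{itemize}
\item $\omega_1=[D_1]=s$ takes the values $(\bar a,1)$ on $(x,y)$;
\item $\omega_2=[D_2]=f$ takes the values $(1,0)$.
\end{itemize}
Pairing against the dual basis then yields $f=x^*$ and $s=\bar a\,x^*+y^*$, which is exactly \eqref{eq:Facochainlabels}.
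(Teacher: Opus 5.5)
Your proposal is correct and follows the paper's own route: you compute $\ker d^1$ sector by sector in the one-vertex torus and Klein-bottle complexes, reduce mod $2$, and read the resulting cochains through the labels \eqref{eq:Facochainlabels}; evaluating $\omega_1,\omega_2$ on the loops with monodromy $x,y$ just makes explicit what the paper reads off from the character table. One small slip: in the degree-two paragraph you mean that every $2$-cochain is a cocycle (since $C^3=0$), not every $1$-cochain. The conclusion $H^2_w=\Z/\im d^1$ is unaffected.
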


\begin{proof}
For the torus, the kernels and images of the two displayed matrices show that the
unique nonzero class in $H^1_w\cong\Z/2$ reduces to $w$ for $w\ne0$.
For the Klein bottle, the untwisted kernel of $d^1$ is generated by the first
degree-one cochain, which we label $f$.  In the orientation sector $w=f$, the first
cochain is torsion and reduces to $f$.  The sum of the first and second cochains is a
free generator and reduces to $x^*+y^*=s$.
The $s$- and $(s+f)$-sectors follow by the same two-by-two calculation.  The
degree-two assertions follow from the cokernels of $d^1$.
\end{proof}

Exactness of \eqref{eq:topbocksteinsequence} says that the images in
Lemma~\ref{lem:Fareductions} are exactly the kernels of the four twisted Bocksteins.
Every Bockstein image lies in the $2$-torsion subgroup.  Reduction is injective on
this subgroup in every sector.  It is an isomorphism on the $2$-torsion when the
target is $\Z/2$, and the target has no $2$-torsion when it is $\Z$.  The only mixed
target is $H^1_f\cong\Z\oplus\Z/2$ for odd $a$; its torsion summand is generated by
$\ell_f$, whose reduction is the nonzero class $f$ by
Lemma~\ref{lem:Fareductions}.  Lemma~\ref{lem:sqcriterion} therefore applies in all
bidegrees on $\mathbb F_a$.

\begin{proposition}
\label{prop:FaK}
For $w=\epsilon s+\delta f$, the groups in the right factor of
Theorem~\ref{thm:fibre} are
\begin{align}
 K^0_{\epsilon,\delta}
 &=\begin{cases}
 \Z,&(\epsilon,\delta)=(0,0),\\
 2\Z,&(\epsilon,\delta)\ne(0,0),
 \end{cases}
 \label{eq:FaK0}\\
 K^1_{\epsilon,\delta}
 &=\{ms+nf:\
 \epsilon n+(a(1+\epsilon)+\delta)m\equiv0\pmod2\},
 \label{eq:FaK1}\\
 K^2_{\epsilon,\delta}&=\Z\,sf.
 \label{eq:FaK2}
\end{align}
\end{proposition}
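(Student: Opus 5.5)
Since the paragraph preceding the statement verifies that reduction is injective on $\im\beta_L$ in every sector and bidegree of $\mathbb F_a$, I will apply Lemma~\ref{lem:sqcriterion}: in every bidegree, $K^p_{\epsilon,\delta}$ is the integral inverse image of $\ker\operatorname{Sq}^2_w$, with $w=\epsilon s+\delta f=c_1(L_{\epsilon,\delta})\bmod 2$. As a cross-check I will also use the topological description: by exactness of \eqref{eq:topbocksteinsequence}, $\ker\beta_{\chi_w}$ equals the image of the reduction $r_{\chi_w}$, which Lemma~\ref{lem:Fareductions} records explicitly. The proof then splits into the three degrees.

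In degree $0$, $\Ch^0=\F_2$ is spanned by $1$ and $\operatorname{Sq}^2_w(1)=w$, since $\operatorname{Sq}^2$ vanishes on degree-zero classes. So $\bar z=1$ lies in the kernel exactly when $w=0$. Equivalently, by Lemma~\ref{lem:Fareductions} only $H^0_0$ reduces nontrivially. The inverse image in $\CH^0=\Z$ is therefore $\Z$ for $(\epsilon,\delta)=(0,0)$ and $2\Z$ otherwise, which is \eqref{eq:FaK0}.

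In degree $1$, I write $\bar z=us+vf$ with $u\equiv m$ and $v\equiv n$, and use \eqref{eq:FaSq}, whose coefficient I will rederive. By \eqref{eq:FaChow}, $f^2=0$ and $s^2=asf$ modulo $2$. Hence $z^2=u^2s^2+v^2f^2=ua\,sf$, where the cross term $2uv\,sf$ drops out mod $2$. Also
\[
wz=(\epsilon s+\delta f)(us+vf)=(\epsilon u a+\epsilon v+\delta u)\,sf .
\]
Summing gives the coefficient $\epsilon v+(a(1+\epsilon)+\delta)u$. Its vanishing is exactly the congruence in \eqref{eq:FaK1}. I will check this against Table~\eqref{tab:reductionimages} sector by sector in both parities; for instance, for $a$ odd and $w=0$ the condition reads $m\equiv0$, which gives $\langle f\rangle$.

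In degree $2$, $\operatorname{Sq}^2_w$ lands in $\Ch^3=0$, so the kernel is everything. Alternatively, Lemma~\ref{lem:Fareductions} says every $H^2_w$ surjects onto $sf$. Since $\CH^2=\Z\,sf$, this gives \eqref{eq:FaK2}.

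The main obstacle is keeping the twist labels consistent. The injectivity verification quoted before the statement, especially for the mixed target $H^1_f\cong\Z\oplus\Z/2$ when $a$ is odd, must really cover all bidegrees $(p+1,w)$ needed. The degree-one comparison with Table~\eqref{tab:reductionimages} must also use the identifications \eqref{eq:Facochainlabels} rather than a silently permuted basis. I will therefore present the degree-one result both ways and confirm that they agree in all eight cases.
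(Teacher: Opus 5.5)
Your proposal is correct and matches the paper's proof. You use the injectivity paragraph to invoke Lemma~\ref{lem:sqcriterion}, then read off the three degrees from $\operatorname{Sq}^2_w(1)=w$, from formula \eqref{eq:FaSq}, and from $\Ch^3(\mathbb F_a)=0$; your cross-check against \eqref{tab:reductionimages} via exactness of \eqref{eq:topbocksteinsequence} does agree in all eight degree-one sectors.
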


\begin{proof}
The preceding injectivity argument identifies the exact Bockstein kernel with the
kernel of \eqref{eq:FaSq}.  In degree zero,
$\operatorname{Sq}^2_w(1)=w$, giving \eqref{eq:FaK0}.  Formula
\eqref{eq:FaSq} gives \eqref{eq:FaK1}; degree three vanishes, giving
\eqref{eq:FaK2}.
\end{proof}

\begin{lemma}
\label{lem:surfaceproducts}
Let $u\in H^1_w$ and $v\in H^1_{w'}$.  If
$H^2_{w+w'}\cong\Z/2$, then $uv$ is uniquely determined by the mod-$2$ product of
their reductions.  If $H^2_{w+w'}\cong\Z$ and one factor is torsion, then $uv=0$.
After these cases, the only undetermined products of positive-degree generators are
the free orientation pairings; graded commutativity and Poincar\'e duality fix them up
to the single choice of a fundamental class.
\end{lemma}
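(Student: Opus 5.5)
The plan is to prove the three assertions of Lemma~\ref{lem:surfaceproducts} in order, using only the explicit complexes \eqref{eq:toruscomplex} and \eqref{eq:Kleincomplex}, the groups in \eqref{tab:even-groups} and \eqref{tab:odd-groups}, and the reduction maps of Lemma~\ref{lem:Fareductions}. Throughout, cup products are computed through Proposition~\ref{prop:signedring}, and I will use that the reduction $r_\chi$ is multiplicative, since $\Z_\chi\otimes\Z_\psi\to\Z_{\chi\psi}$ reduces to $\F_2\otimes\F_2\to\F_2$. In other words,
\[
 r_{w+w'}(uv)=r_w(u)\,r_{w'}(v)\in H^2(M_a;\F_2)=\F_2\,sf .
\]

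\textbf{First case: $H^2_{w+w'}\cong\Z/2$.} By Lemma~\ref{lem:Fareductions}, every degree-two group maps onto $sf$. A surjection $\Z/2\to\F_2$ is an isomorphism, so $r_{w+w'}$ is injective on $H^2_{w+w'}$. Hence $uv$ is determined by $r_w(u)r_{w'}(v)$. This product can be read off from the table \eqref{tab:reductionimages} and the ring structure of $\Ch^*(\mathbb F_a)$, transported by \eqref{eq:Facochainlabels}.

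\textbf{Second case: $H^2_{w+w'}\cong\Z$ and $u$ is torsion.} If $nu=0$ with $n\neq 0$, then $n(uv)=(nu)v=0$. Since $\Z$ is torsion-free, $uv=0$. The same argument applies when $v$ is the torsion factor.

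\textbf{Remaining case: $H^2_{w+w'}\cong\Z$ and both factors have free components.} From the tables, $H^2_w\cong\Z$ occurs exactly for $w+w'=0$ with $a$ even, and for $w+w'=f$ with $a$ odd. In the second situation the target is the orientation sector.

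So the undetermined products are:
\begin{itemize}
 \item[(a)] for $a$ even, pairings $H^1_0\times H^1_0\to H^2_0$; these occur only with $w=w'=0$, since $H^1_w$ is torsion for $w\neq 0$;
 \item[(b)] for $a$ odd, pairings $H^1_0\times H^1_f\to H^2_f$ and $H^1_f\times H^1_0\to H^2_f$, where $H^1_0\cong\Z$ and $H^1_f\cong\Z\oplus\Z/2$ have free generators.
\end{itemize}
Pairings $H^1_s\times H^1_{s+f}$ land in $\Z$ but both factors are torsion, so they fall under the second case. Products involving degree zero are determined by the unit and $H^0_w=0$ for $w\neq0$.

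Both (a) and (b) are the twisted Poincar\'e duality pairing
\[
 H^1(M;\Z_\chi)/\mathrm{tors}\times H^1(M;\Z_{\chi\,w_1})/\mathrm{tors}\longrightarrow H^2(M;\Z_{w_1})\cong\Z .
\]
This pairing is perfect, so once a generator $[M]$ of $H^2(M;\Z_{w_1})$ is fixed (a fundamental class), the free generators pair to $\pm1$.

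Graded commutativity \eqref{eq:rigidcommutativity} fixes the relation between $uv$ and $vu$. For the torus, it also forces $u^2=0$ for degree-one classes, since $u^2=-u^2$ in $\Z$. For the Klein bottle, the pairing between the free generator of $H^1_0$ and that of $H^1_f$ is then fixed up to the sign determined by $[M]$.

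\textbf{Main obstacle.} The delicate step is tracking these signs through the rigidification $\theta_{\epsilon,\delta}$ and square cancellation. I would check that changing $[M]\mapsto-[M]$ is the only freedom, by noting that all other identifications are fixed by the chosen cochain bases. Concretely, I would verify this by evaluating the Alexander--Whitney product on the one-vertex cell model via the Fox calculus cocycles.
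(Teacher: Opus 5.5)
Your proposal is correct and follows essentially the same route as the paper. The three steps are the same: reduction is injective on the $\Z/2$ top-degree sectors, combined with multiplicativity of reduction; a product with a torsion factor is torsion and hence zero in a free target; and Poincar\'e duality with local coefficients handles the remaining free--free pairings, with $2u^2=0$ forcing $u^2=0$ on the torus.

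Your explicit enumeration of the sectors with target $\Z$ is a useful addition that the paper leaves implicit. The sign-tracking through $\theta_{\epsilon,\delta}$ and square cancellation that you flag as the main obstacle is not needed here. The lemma concerns only the character-labelled cup products on $\bigoplus_{p,w}H^p_w$, and no rigidification enters there.
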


\begin{proof}
Lemma~\ref{lem:Fareductions} identifies reduction from every $\Z/2$ top-degree
sector with the nonzero subgroup $\F_2sf$, so it is injective.  This proves the first
claim.  The image of a product containing a torsion factor is torsion, and hence is
zero in a free abelian target.  Poincar\'e duality with local coefficients identifies
the remaining free--free pairing with evaluation on the orientation class.  On the
orientable torus, graded commutativity also gives $2u^2=0$ in a free top group, hence
$u^2=0$.  These observations exhaust all degree-two products because $M_a$ is a
closed surface.
\end{proof}

\subsection{Complete multiplication in the signed ring}

We record the multiplication rather than leaving it implicit in a list of groups.
For even $a$, choose free untwisted degree-one classes $S,F$ reducing to $s,f$ and
an orientation $T\in H^2_0$ with $SF=T$.  For each nonzero $w$, let
$\ell_w\in H^1_w$ be the unique nonzero class and let
$t_w\in H^2_w$ be the unique nonzero class.  Then
\begin{equation}
 S^2=F^2=0,\quad SF=-FS=T,
 \label{eq:evenfreeprod}
\end{equation}
and every product landing in a nonzero twist sector is the unique class whose
reduction is the product in
$\F_2[s,f]/(s^2,f^2)$.  Every product of two positive-degree classes landing in the
untwisted free group $H^2_0=\Z$ is zero unless it is the pairing in
\eqref{eq:evenfreeprod}.  Lemma~\ref{lem:surfaceproducts} proves that these rules are
complete.  They give, for example,
\[
 F\ell_s=t_s,\quad S\ell_f=t_f,\quad
 \ell_s\ell_f=t_{s+f},\quad
 \ell_s\ell_{s+f}=t_f,
\]
and all squares $\ell_w^2$ vanish.

For odd $a$, choose
\[
 U\in H^1_0,\quad V\in H^1_f,\quad T_f\in H^2_f
\]
with reductions $f,s,sf$ and orientation pairing
\begin{equation}
 UV=T_f,\qquad VU=-T_f.
 \label{eq:oddorientationpairing}
\end{equation}
Let $\ell_s,\ell_f,\ell_{s+f}$ denote the torsion degree-one generators reducing to
$s,f,s+f$, respectively, where $\ell_f$ is the torsion summand of $H^1_f$.  Let
$t_w$ be the generator of $H^2_w\cong\Z/2$ for $w\ne f$, including $w=0$.

\begin{proposition}
\label{prop:Kleinring}
Besides the unit action and \eqref{eq:oddorientationpairing}, multiplication is
determined by the following rule: every product landing in one of the groups
$H^2_w\cong\Z/2$, $w\ne f$, is the unique element whose reduction is the product in
\[
 \F_2[s,f]/(f^2,s^2+sf),
\]
and every product involving a torsion factor and landing in the free group
$H^2_f=\Z$ is zero.  In particular,
\begin{align*}
 &V^2=t_0,\quad U^2=0,\quad U\ell_s=t_s,
 \quad U\ell_f=0,\quad U\ell_{s+f}=t_{s+f},\\
 &V\ell_s=t_{s+f},\quad V\ell_f=t_0,
 \quad V\ell_{s+f}=0,\\
 &\ell_s^2=t_0,\quad \ell_f^2=0,
 \quad \ell_{s+f}^2=t_0,\\
 &\ell_s\ell_f=t_{s+f},\quad
 \ell_s\ell_{s+f}=0,\quad
 \ell_f\ell_{s+f}=t_s.
\end{align*}
These identities, graded commutativity, and addition of twist labels give every
product in $\bigoplus_{p,w}H^p_w$.
\end{proposition}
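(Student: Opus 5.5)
The proof is a case-by-case evaluation, using Lemma~\ref{lem:surfaceproducts} to reduce every degree-two product to a computation in the mod-$2$ Chow ring. Two inputs make this work.

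\emph{Reduction is multiplicative.} The maps $r_\chi$ of \eqref{eq:reduction} are induced by the coefficient maps $\Z_\chi\to\F_2$, and these commute with the tensor maps $\Z_\chi\otimes\Z_\psi\to\Z_{\chi\psi}$ and with the equivariant Alexander--Whitney diagonal. Hence $r_{\chi\psi}(uv)=r_\chi(u)r_\psi(v)$ on the common reduced complex $\overline C^*(\Sigma)$.

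\emph{The mod-$2$ target is the Chow ring.} By Proposition~\ref{prop:explicitcl2}, $\operatorname{cl}^{\mathrm{fan}}_2$ is a ring isomorphism onto $H^*(\overline C^*(\Sigma))$. For odd $a$ the relation $s(s+af)=0$ reduces to $s^2=sf$, so the target is $\F_2[s,f]/(f^2,s^2+sf)$. Lemma~\ref{lem:Fareductions} then supplies the reductions $U\mapsto f$, $V\mapsto s$, $\ell_s\mapsto s$, $\ell_f\mapsto f$, $\ell_{s+f}\mapsto s+f$. The twist sector of a product is obtained by adding labels, which is legitimate because the character table \eqref{tab:Facharacters} is a group homomorphism from $\Pic/2$.

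Next I would check that the listed classes generate. By \eqref{tab:odd-groups}, $H^0_w=0$ for $w\neq0$ and $H^0_0=\Z$ acts as the unit. Also $H^1_0=\Z U$, $H^1_f=\Z V\oplus\Z/2\,\ell_f$, $H^1_s=\Z/2\,\ell_s$ and $H^1_{s+f}=\Z/2\,\ell_{s+f}$. Since $M_a$ is a surface, all remaining products are degree-one times degree-one products, together with their degree-two outputs. By Lemma~\ref{lem:surfaceproducts} these are of three kinds.
\begin{enumerate}[(i)]
\item Products landing in a $\Z/2$ sector $w\neq f$ are determined by their reduction.
\item Products with a torsion factor landing in $H^2_f=\Z$ vanish.
\item The only remaining case is the free pairing $H^1_0\times H^1_f\to H^2_f$.
\end{enumerate}
For (iii), Poincar\'e duality with the orientation local system makes this pairing perfect modulo torsion. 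So after fixing a fundamental class $T_f$ one can choose $U,V$ with $UV=T_f$. Graded commutativity in degrees $(1,1)$, as in \eqref{eq:rigidcommutativity} with $\langle-1\rangle\mapsto-1$, then gives $VU=-T_f$. It remains to check that this choice is compatible with $V$ reducing to $s$. Adding multiples of $\ell_f$ or of $U$ does not change the pairing up to sign, so I would adjust $V$ by $\ell_f$ if needed.

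Each displayed identity is then a one-line reduction computation; the only relation used beyond $f^2=0$ is $s^2=sf$.
\begin{itemize}
\item $V^2\mapsto s^2=sf$, so $V^2=t_0$; and $U^2\mapsto f^2=0$.
\item $U\ell_s\mapsto sf$ in sector $s$, giving $t_s$; and $U\ell_{s+f}\mapsto f(s+f)=sf$, giving $t_{s+f}$.
\item $V\ell_s\mapsto sf$ in sector $s+f$, giving $t_{s+f}$; and $V\ell_f\mapsto sf$ in sector $0$, giving $t_0$.
\item $V\ell_{s+f}\mapsto s(s+f)=0$ in sector $s$, so it vanishes.
\item $\ell_s^2\mapsto sf$ and $\ell_{s+f}^2\mapsto s^2=sf$, both giving $t_0$; and $\ell_f^2\mapsto 0$.
\item $\ell_s\ell_f\mapsto sf$ in sector $s+f$, giving $t_{s+f}$; and $\ell_f\ell_{s+f}\mapsto sf$ in sector $s$, giving $t_s$.
\item $U\ell_f$ and $\ell_s\ell_{s+f}$ land in the free sector $f$ with a torsion factor, so they vanish.
\end{itemize}
All other products follow by graded commutativity; in $\Z/2$ targets the signs are invisible.

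I expect the main obstacle to be bookkeeping rather than mathematics. The cochain labels $x^*,y^*$ must match $f$ and $s+f$ consistently through \eqref{eq:Facochainlabels} and Lemma~\ref{lem:Fareductions}, and the free generator $V$ must reduce to $s$ rather than to $s+f$. I would verify the latter directly on the Klein-bottle complex \eqref{eq:Kleincomplex}: the free cocycle $(1,1)$ reduces to $x^*+y^*=f+(s+f)=s$.
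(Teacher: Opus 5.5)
Your proposal is correct and follows essentially the same route as the paper's proof. Both use Poincar\'e duality for the free pairing $H^1_0\times H^1_f\to H^2_f$ and the vanishing of torsion products in $H^2_f=\Z$. Both also use injectivity of reduction on the $\Z/2$ top sectors, and then multiply the reductions $f,s,s,f,s+f$ using $f^2=0$ and $s^2=sf$, following the case split of Lemma~\ref{lem:surfaceproducts}. One small slip: $V$ can only be adjusted by multiples of $\ell_f$, not of $U$, since $U$ lies in the untwisted sector; adjusting by $\ell_f$ suffices because $U\ell_f=0$.
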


\begin{proof}
Poincar\'e duality with local coefficients gives the unimodular pairing
\eqref{eq:oddorientationpairing}; the orientation character is $f$.  A product with a
torsion factor cannot be nonzero in the free group $H^2_f$.  For every other
degree-two sector, Lemma~\ref{lem:Fareductions} identifies reduction
$H^2_w\to\F_2sf$ with an isomorphism.  Hence reduction determines the product.
Multiplying the reductions of $U,V,\ell_s,\ell_f,\ell_{s+f}$ by the relations
$f^2=0$ and $s^2=sf$ gives the displayed list.  This is precisely the exhaustive
case distinction of Lemma~\ref{lem:surfaceproducts}.
\end{proof}

The analogous even-parity rules and Proposition~\ref{prop:Kleinring} make the entire
left factor in Theorem~\ref{thm:main} explicit, not merely its additive groups.

\subsection{The rigidified total Chow--Witt ring}

Let $A_a=\bigoplus_{p,w}H^p_w$ with the multiplication just described, and let
$K_a=\bigoplus_{p,\epsilon,\delta}K^p_{\epsilon,\delta}$ with
\eqref{eq:FaK0}--\eqref{eq:FaK2}.  If
$x\in K^p_{\epsilon,\delta}$ and
$y\in K^q_{\epsilon',\delta'}$, their product in $K_a$ is the ordinary Chow product
$xy$, placed in
$K^{p+q}_{\epsilon+\epsilon',\delta+\delta'}$.  This lies in the asserted subgroup
because $K_a$ is the rigidified forgetful image in Theorem~\ref{thm:fibre}; equivalently,
the defining Bockstein-kernel condition is multiplicatively closed.  The maps from
$A_a$ to mod-$2$ Chow theory are
given by Lemma~\ref{lem:Fareductions}, and the maps from $K_a$ are ordinary
reduction.

\begin{theorem}
\label{thm:Fa}
Let $R$ be real closed and $a\geq0$.  There is an isomorphism of
$\Z\oplus(\Z/2)^2$-graded rings
\begin{equation}
 \CHW^\bullet_{\mathfrak r_a}(\mathbb F_a,\Pic/2)
 \cong
 A_a\times_{\F_2[s,f]/(f^2,s^2+a sf)[(\Z/2)^2]}K_a.
 \label{eq:FaFinal}
\end{equation}
The additive groups of $A_a$ are \eqref{tab:even-groups} and
\eqref{tab:odd-groups}; all of its products are given by
\eqref{eq:evenfreeprod} and Proposition~\ref{prop:Kleinring}; all reduction maps are
given by \eqref{tab:reductionimages}; and $K_a$ is given by
\eqref{eq:FaK0}--\eqref{eq:FaK2}, with the restricted Chow multiplication specified
above.  Thus \eqref{eq:FaFinal} specifies every group,
product, twist, and forgetful map.
\end{theorem}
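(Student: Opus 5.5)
The plan is to specialize Theorem~\ref{thm:main} to the fan of $\mathbb F_a$ with the rigidification $\mathfrak r_a$, and then replace each abstract ingredient by the explicit data computed in this section. Theorem~\ref{thm:main} already gives an isomorphism of $\Z\oplus(\Z/2)^2$-graded rings between $\CHW^\bullet_{\mathfrak r_a}(\mathbb F_a,\Pic/2)$ and $A_{\Sigma,\mathfrak r_a}\times_{\Ch^*[(\Z/2)^2]}\bigoplus K^p_\epsilon(\Sigma)$. So the remaining work is to identify three things with the data above: the left factor with its products, the two structure maps, and the right factor.

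\textbf{Left factor.} The left factor $A_{\Sigma,\mathfrak r_a}$ is the local-coefficient cohomology ring of $Y/H\cong M_a$. By Proposition~\ref{prop:signedring}, the signed cohomology is intrinsic: it equals $H^p(M_a;\Z_\chi)$ with cup products. We may therefore compute it with any model. Here I would use the one-vertex CW structures \eqref{eq:toruscomplex} and \eqref{eq:Kleincomplex} in place of the barycentric model, checking two points.
\begin{enumerate}[(a)]
\item The generators $x,y$ of $\pi_1$ have monodromy $x,y\in K_0$ in the cover $\RZ_K\to M_a$. This holds because $K_0\cong\pi_1(M_a)^{\mathrm{ab}}/2$-quotient data, read off from \eqref{eq:FaK0deck}.
\item The characters are those of table~\eqref{tab:Facharacters}, by Lemma~\ref{lem:characters}.
\end{enumerate}
The Smith forms then give \eqref{tab:even-groups} and \eqref{tab:odd-groups}. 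The products are \eqref{eq:evenfreeprod} and Proposition~\ref{prop:Kleinring}, with completeness supplied by Lemma~\ref{lem:surfaceproducts}. The rigidification $\mathfrak r_a$ only labels sectors by $(\epsilon,\delta)$, matching $w=\epsilon s+\delta f$.

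\textbf{Structure maps.} The map $a_\epsilon$ is $(\operatorname{cl}_2^{\mathrm{fan}})^{-1}\circ r_\chi$. Here I must check that the degree-one mod-$2$ cellular classes $x^*,y^*$ correspond under $\operatorname{cl}_2$ to $f$ and to $s$ or $s+f$, as in \eqref{eq:Facochainlabels}. By Proposition~\ref{prop:explicitcl2}, $\operatorname{cl}_2[D_i]$ is the class $\ell_i$ of the deck-transition cocycle. Its value on the loop with monodromy $g$ is $\ell_i(g)$, so evaluating on $x=(\bar a,1,0,1)$ and $y=(1,0,1,0)$ gives $\operatorname{cl}_2(s)=\ell_1$ and $\operatorname{cl}_2(f)=\ell_2$. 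In the dual basis this yields \eqref{eq:Facochainlabels}, and Lemma~\ref{lem:Fareductions} then gives \eqref{tab:reductionimages}. The map $b_\epsilon$ is reduction by definition.

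\textbf{Right factor.} For the right factor I would use the computation of $K^p_{\epsilon,\delta}$ in Proposition~\ref{prop:FaK}. This is legitimate because the injectivity hypothesis of Corollary~\ref{cor:sqshortcut} was verified sector by sector just before that proposition. Equation \eqref{eq:FaSq} uses $\operatorname{Sq}^2$ equal to squaring on $\CH^1$ together with $c_1(L_w)=w$ mod $2$. Multiplicative closure of $K_a$ comes from Theorem~\ref{thm:fibre}.

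\textbf{Main obstacle.} The delicate step is the monodromy identification in (a) together with the label matching in (b): that the chosen $\pi_1$ generators have deck transformations exactly $x,y$, and that the relator $xyx^{-1}y$ is correct for odd $a$. A mistake here would silently permute the twist sectors. I would verify it directly on $\RZ_K$, which is a $4$-gon cubical complex (a surface tiled by $16$ squares). The approach is to trace two explicit edge loops in the quotient and compute their lifted endpoints. As a cross-check, the orientation character should come out as $f$, since $w_1(M_a)=\sum\operatorname{cl}_2 D_i=2s+(2+a)f\equiv af$. This is consistent with $f$ being the orientation character for odd $a$ and with the trivial one for even $a$.
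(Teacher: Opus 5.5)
Your proposal is correct and follows the paper's own route. You specialize Theorem~\ref{thm:main} to $\mathfrak r_a$, replace the barycentric signed complexes by the one-vertex torus and Klein-bottle models (legitimate because both compute the same local-coefficient cohomology ring), and read off the two factors and the structure maps from Lemma~\ref{lem:Fareductions} and Propositions~\ref{prop:FaK} and~\ref{prop:Kleinring}. Your cross-check $w_1(M_a)=\sum_i\operatorname{cl}_2D_i\equiv af$ is a worthwhile addition. Together with the monodromy isomorphism $H_1(M_a;\F_2)\cong K_0$, it is what ensures that $y=(1,0,1,0)$ can serve as the orientation-preserving generator of the Klein-bottle presentation, a point the paper settles only by asserting a choice of generators.
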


\begin{proof}
This is Theorem~\ref{thm:main} with the two finite signed complexes reduced to the
one-vertex torus and Klein-bottle complexes.  Propositions~\ref{prop:FaK} and
\ref{prop:Kleinring} provide the two factors and all structure maps explicitly.
\end{proof}

As a consistency check, the untwisted groups are
\[
 \CHW^0(\mathbb F_a)=\GW(R),
\]
\[
 \CHW^1(\mathbb F_a)\cong
 \begin{cases}
 \GW(R)^{\oplus2},&a\text{ even},\\
 2\Z\oplus\GW(R),&a\text{ odd},
 \end{cases}
\qquad
 \CHW^2(\mathbb F_a)\cong
 \begin{cases}
 \GW(R),&a\text{ even},\\
 \Z,&a\text{ odd}.
 \end{cases}
\]
Here $2\Z$ records an index-two forgetful image; abstractly it is infinite cyclic.

\subsection{Computational supplement}

The supplement contains two complementary audits using only the Python standard
library.  The general audit constructs the signed complexes, products, and divisor
cocycles for representatives of the two parities of $a$, while an independent checker
reconstructs these data from the fan and verifies the defining identities.  A second,
small-matrix audit starts from the four rays and the one-vertex torus or Klein-bottle
presentations.  It checks the character table, the cohomology and reduction images in
all twist sectors, every mod-$2$ product used in
Proposition~\ref{prop:Kleinring}, and the parity kernels in \eqref{eq:FaK1}.  It also
compares the general fan certificates with the explicit Hirzebruch calculation at the
integral free-rank and mod-$2$ ring levels.  All assertions in this section are proved
independently of these audits.

\section{The arithmetic boundary}
\label{sec:boundary}

The finite signed complex records real signatures.  It does not record the full Witt
theory of a non-real-closed field.  The smallest toric example already detects the
difference.

\begin{proposition}
\label{prop:P1obstruction}
Let $k$ be a perfect ordered field of characteristic different from $2$, let $R$ be a
real closure for the chosen ordering, and consider $\PP^1_k$.  Then
\begin{align*}
 H^1(\PP^1_k,\I^1)&\cong\W(k),\\
 H^1(\PP^1_k,\I^1(\mathcal O(1)))
 &\cong\W(k)/I(k)\cong\Z/2.
\end{align*}
Under base change and real realization these map to
\[
 H^1(S^1;\Z)\cong\Z,
 \qquad
 H^1(S^1;\Z_{\mathrm{sign}})\cong\Z/2.
\]
The second map is an isomorphism, while the first is the signature
$\W(k)\to\W(R)\cong\Z$.  Hence the signed cochain model over $R$ computes the full
diagonal $\I$-cohomology over $k$ only if the relevant signature maps are
isomorphisms.  In particular, it fails for $k=\mathbb Q$.
\end{proposition}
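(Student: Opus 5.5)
We compute the two $\I$-cohomology groups over $k$ from the projective-line calculation of Fasel, then compare them with the circle model through the real cycle class map, sector by sector. The first step is the cellular localization sequence for $\mathbb A^1\subset\PP^1_k$ with complement a rational point $\infty$. Homotopy invariance gives $H^*(\mathbb A^1,\I^1(L))=H^*(\Spec k,\I^1)$, concentrated in degree $0$ and equal to $I(k)$. D\'evissage identifies the cohomology supported at $\infty$ with $H^0(\Spec k,\I^0)=\W(k)$ in degree $1$, after twisting by the normal bundle and by $L|_\infty$. The long exact sequence
\[
 H^0(\PP^1,\I^1(L))\to I(k)\xrightarrow{\ \partial_L\ }\W(k)\to H^1(\PP^1,\I^1(L))\to H^1(\mathbb A^1,\I^1)=0
\]
therefore gives $H^1(\PP^1_k,\I^1(L))=\operatorname{coker}\partial_L$. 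The boundary is the residue at $\infty$ of the pullback of a class of $I(k)$ to $k(t)$, twisted by $L$. For $L=\mathcal O$ the residue of a constant form vanishes, so $\partial_{\mathcal O}=0$ and $H^1\cong\W(k)$. For $L=\mathcal O(1)$, the trivialization of $L$ on $\mathbb A^1$ has a simple pole at $\infty$. The twisted residue of $\phi$ is then the residue of $\langle t\rangle\phi$, up to the chosen uniformizer, and this is $\phi$ itself. Hence $\partial_{\mathcal O(1)}$ is the inclusion $I(k)\hookrightarrow\W(k)$ and the cokernel is $\W(k)/I(k)\cong\Z/2$ by rank parity. I would cite Fasel's projective bundle theorem for these two identifications rather than recompute the residue signs. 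The twisted residue computation is the only delicate point.

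The second step is to compute the real side. $\PP^1(R)$ is a semialgebraic circle. The fan of $\PP^1$ has $r=2$ rays and $K_0=\langle(1,1)\rangle$, and $\RZ_K$ is the square boundary, a circle on which $K_0$ acts antipodally. The nontrivial character corresponds to $\mathcal O(1)$ by Lemma~\ref{lem:characters}. The signed complexes from Proposition~\ref{prop:semialgebraic} give $H^1(S^1;\Z)=\Z$ and $H^1(S^1;\Z_{\mathrm{sign}})=\Z/2$. This is the same torus-type complex as in \eqref{eq:toruscomplex}, with one generator.

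The third step is the comparison. Real realization factors as base change $k\to R$ followed by $\operatorname{cl}^{\mathrm{nor}}_R$. Base change is compatible with the localization sequences, since it is functorial for open immersions and d\'evissage, so on the cokernels above it is induced by $\W(k)\to\W(R)$. Over $R$, Theorem~\ref{thm:realcycle} makes $\operatorname{cl}^{\mathrm{nor}}_R$ an isomorphism, because $\PP^1_R$ is cellular. This identifies $H^1(\PP^1_R,\I^1)=\W(R)\cong\Z$ via signature. The untwisted map is therefore the signature $\W(k)\to\Z$ attached to the ordering.

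In the twisted sector, the induced map $\W(k)/I(k)\to\W(R)/I(R)$ is rank mod $2$ on both sides, so it is an isomorphism $\Z/2\to\Z/2$. For the final claim, $\W(\mathbb Q)$ contains $\W(\F_p)$-type summands through Milnor's exact sequence $0\to\W(\Z)\to\W(\mathbb Q)\to\bigoplus_p\W(\F_p)\to0$. For example, $\langle1\rangle-\langle2\rangle$, or $\langle 1,1,-3,-3\rangle$-type classes, are nonzero of signature $0$. So the signature has a nonzero kernel and the first map is not injective. Since the signed model depends only on $\Sigma$, it cannot distinguish $k$ from $R$, and this gives the ``only if''.
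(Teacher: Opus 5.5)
Your argument is correct. It reaches the two algebraic groups by a different route from the paper's.

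\textbf{What the paper does.} It computes nothing directly on $\PP^1_k$. For $H^1(\PP^1_k,\I^1)\cong\W(k)$ it quotes Fasel's untwisted projective bundle theorem. For the $\mathcal O(1)$-sector it passes through Fasel's twisting morphism, its split injectivity, the reduced groups and the even-rank projective bundle formula, then uses square cancellation to move from $\mathcal O(-1)$ to $\mathcal O(1)$. The real side is the same one-cell circle complex you use. The paper simply asserts that the untwisted comparison is the signature, and gives no separate argument that the twisted comparison is an isomorphism.

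\textbf{What you do differently.} You present both groups as $\operatorname{coker}(\partial_L\colon I(k)\to\W(k))$ in the localization sequence for $\mathbb A^1\subset\PP^1_k$. The boundary is zero in the untwisted case and the inclusion in the twisted case. This gains two things.
\begin{enumerate}[(i)]
\item The computation is self-contained and insensitive to the conventions you flagged as delicate, so the fallback citation to Fasel is unnecessary. Changing the uniformizer, the normal orientation, or the trivialization only multiplies $\partial_{\mathcal O(1)}$ by a unit $\langle c\rangle$. Since $I(k)$ is an ideal, $\langle c\rangle I(k)=I(k)$. Whether the trivializing datum has a simple zero or a simple pole at $\infty$ is also immaterial, because $\langle u\rangle=\langle u^{-1}\rangle$.
\item Your presentation is a quotient of $\W(k)$ via pushforward from the rational point $\infty$, so it is visibly natural under base change to $R$. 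Combined with the cellular isomorphism of Theorem~\ref{thm:realcycle} for $\PP^1_R$, it actually proves both comparison claims that the paper only states.
\begin{itemize}
\item Untwisted sector: the only group isomorphisms $\W(R)\to\Z$ are $\pm$ the signature, so the map is the signature up to the choice of fundamental class.
\item Twisted sector: you get rank parity $\W(k)/I(k)\to\W(R)/I(R)$, which is an isomorphism $\Z/2\to\Z/2$.
\end{itemize}
\end{enumerate}

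\textbf{Trade-off.} The paper's route is shorter and rests on general theorems that also cover higher-dimensional projective bundles. Yours is specific to $\PP^1$ but more transparent and complete at the comparison step.

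Your $\mathbb Q$ example $\langle1\rangle-\langle2\rangle=\langle1,-2\rangle$ in $\W(\mathbb Q)$ is exactly the paper's.
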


\begin{proof}
The untwisted group is the $n=1$ case of Fasel's untwisted projective bundle formula
\cite[Theorem~4.1]{FaselPBT}.  For the twisted group, the twisting morphism of
\cite[Definition~5.1]{FaselPBT}, its split injectivity
\cite[Corollary~5.8]{FaselPBT}, and the reduced groups of
\cite[Definition~5.9]{FaselPBT} reduce the calculation to the even-rank projective
bundle formula \cite[Theorem~8.1]{FaselPBT}.  For the projectivization of the trivial
rank-two bundle, the reduced $\mathcal O(-1)$-twisted term vanishes and the remaining
degree-one term is $\overline{\I}^{\,0}(k)=\W(k)/I(k)$.  The twists
$\mathcal O(1)$ and $\mathcal O(-1)$ differ by the square
$\mathcal O(1)^{\otimes2}$, so square cancellation identifies the convention in that
formula with the one used here.  The real line bundle underlying $\mathcal O(1)$ is the
nontrivial line bundle on $\mathbb RP^1=S^1$.  The one-cell cochain complex for the
trivial local system has zero differential and degree-one cohomology $\Z$; for the
sign local system the differential is multiplication by $2$, giving $\Z/2$.
The comparison in the untwisted sector restricts on coefficients to the chosen
signature.  Over $\mathbb Q$, for example, the anisotropic form
$\langle1,-2\rangle$ has signature zero but a nonzero Witt class, so the comparison is
not injective.
\end{proof}

\begin{corollary}
\label{cor:noscalar}
There is no construction obtained solely by replacing the coefficient ring $\Z$ in
the signed real cellular complex by $\W(k)$ that computes
$\bigoplus_L H^*(X_\Sigma,\I^*(L))$ for all perfect fields $k$.  In the
$\mathcal O(1)$-sector of $\PP^1$ such a replacement gives
$\W(k)/2\W(k)$, whereas the algebraic group is always
$\W(k)/I(k)\cong\Z/2$.
\end{corollary}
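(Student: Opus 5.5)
The statement to prove is Corollary~\ref{cor:noscalar}. The plan is to reduce it to the single test case $X_\Sigma=\PP^1$ and the $\mathcal O(1)$-sector, and then compare two explicit groups. The construction under consideration is completely determined by the signed real cellular complex. For $\PP^1$, Section~\ref{sec:signed} gives (or a one-cell model computes) the complex for the sign character $\chi_{\mathcal O(1)}$: as in the proof of Proposition~\ref{prop:P1obstruction}, it is $\Z\xrightarrow{2}\Z$ in degrees $0\to1$. The differential is $1-\chi(g)=2$, where $g$ is the nontrivial deck transformation of $S^1\to S^1$. Replacing the coefficient ring $\Z$ by $\W(k)$ only tensors the matrices with $\W(k)$, so the modified complex is $\W(k)\xrightarrow{2}\W(k)$. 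Its degree-one cohomology is $\W(k)/2\W(k)$. We should check that this step is insensitive to the chosen subdivision. Any invariant subdivision is $\Z[H]$-chain homotopy equivalent to the one-cell model, and $-\otimes_\Z\W(k)$ preserves such equivalences, so the output is well defined.

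Next we compute the algebraic side. By Proposition~\ref{prop:P1obstruction} (Fasel's projective bundle computation), $H^1(\PP^1_k,\I^1(\mathcal O(1)))\cong\W(k)/I(k)\cong\Z/2$ for every perfect field $k$ of characteristic $\neq2$. The identification $\W(k)/I(k)\cong\Z/2$ is the rank-mod-$2$ map, which holds for all fields. For the ordered-field hypothesis of that proposition, note that Fasel's formula itself is field-independent, so the twisted computation extends verbatim. Alternatively, a single well-chosen family of fields suffices.

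It then remains to exhibit a perfect field $k$ with $\W(k)/2\W(k)\not\cong\Z/2$. Here $\mathbb Q$ works. We have $2=\langle1,1\rangle$ in $\W(\mathbb Q)$, and $\W(\mathbb Q)\cong\Z\oplus\bigoplus_p \W(\F_p)$ by Milnor's exact sequence. Reducing modulo $2$ therefore produces infinitely many $\Z/2$ summands from the odd primes $p\equiv1\pmod4$, where $\W(\F_p)\cong(\Z/2)^2$. Hence $\W(\mathbb Q)/2$ is infinite. A simpler choice is $k$ algebraically closed ($\mathbb C$). There $\W(k)=\Z/2$ and $2=0$, so $\W/2\W=\Z/2$, which actually agrees, so that field is useless. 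Instead take $k=\mathbb R$: $\W(\mathbb R)/2\W(\mathbb R)=\Z/2$ also agrees. So the distinction must come from a field like $\mathbb Q$ or $\F_p$ with $p\equiv3\pmod4$. For the latter, $\W(\F_p)=\Z/4$ with $2\neq0$, so $\W/2\W=\Z/2$, which again agrees. The cleanest witness is therefore $\mathbb Q$, or $\mathbb Q_p$, where $\W(\mathbb Q_p)/2$ has order greater than $2$.

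The only delicate point is the rhetorical scope of ``construction obtained solely by replacing the coefficient ring''. I will formalize it as: apply $-\otimes_\Z\W(k)$ to the integer matrices \eqref{eq:signedmatrix} and take cohomology, functorially in $k$. Under this reading the argument above is complete. The main obstacle I anticipate is certifying the untwisted comparison as well, which requires the universal coefficient behaviour and would show $\W(k)$ in degree one, matching. That explains why the counterexample must be located in the twisted sector, exactly as stated.
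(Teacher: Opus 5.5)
Your argument is correct and follows the paper's proof. You tensor the one-cell sign complex $\Z\xrightarrow{2}\Z$ for $\PP^1$ with $\W(k)$ to get $\W(k)/2\W(k)$, and compare this with Fasel's field-independent answer $\W(k)/I(k)\cong\Z/2$.

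The only difference is the witness field. The paper uses $\F_q$ with $q\equiv1\pmod4$: there $2=\langle1,1\rangle=0$ in $\W(\F_q)\cong(\Z/2)^2$, so $\W/2\W$ has four elements (a case your survey of finite fields skipped). Your choice of $\mathbb Q$ also works, and has the minor advantage that Proposition~\ref{prop:P1obstruction} applies to it directly, since $\mathbb Q$ is ordered.
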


\begin{proof}
The signed cellular differential on the circle is multiplication by $2$.  The claimed
cohomology is therefore $\W(k)/2\W(k)$.  This need not have two elements; for
example, for a finite field $k=\F_q$ with $q\equiv1\pmod4$ the Witt ring is
$\Z/2\oplus\Z/2$.  Independently of any ordering, the twisted projective bundle formula
\cite[Definition~5.1 and Theorem~8.1]{FaselPBT} gives the algebraic answer
$\W(k)/I(k)\cong\Z/2$.  Thus this example does not invoke
Proposition~\ref{prop:P1obstruction}, whose hypotheses deliberately require an ordered
field.
\end{proof}

\begin{corollary}
\label{cor:fanonlyboundary}
There is no graded-ring construction depending only on an integral fan $\Sigma$ that,
even after a fixed basis rigidification $\mathfrak r$, computes
$\CHW^\bullet_{\mathfrak r}(X_\Sigma,\Pic/2)$ over every perfect field of characteristic
different from $2$.  This already fails in bidegree $(0,0)$ for the fan of $\PP^1$.
Moreover, the naive operation of replacing $\Z$ in the signed real cellular complex
by the abstract coefficient ring $\W(k)$ does not repair the failure in positive degree.
\end{corollary}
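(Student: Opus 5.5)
The plan is to argue by contradiction on the fan of $\PP^1$, in bidegree $(0,0)$, comparing two base fields with the same fan. Suppose $\mathcal F$ assigns to each integral fan $\Sigma$ (with fixed rigidification $\mathfrak r$) a graded ring that is isomorphic to $\CHW^\bullet_{\mathfrak r}(X_\Sigma,\Pic/2)$ over every perfect field $k$ of characteristic different from $2$. Since $\mathcal F(\Sigma)$ does not depend on $k$, its bidegree-$(0,0)$ component would be one abelian group, isomorphic to $\CHW^0(\PP^1_k,\mathcal O)$ for all such $k$.

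The first step is to identify that group. For any smooth connected $X$, $\CHW^0(X,\mathcal O)$ is the unramified $\GW$-group, since $\CHW^0=H^0(X,\KMW_0)$. For $\PP^1_k$, homotopy invariance and the cellular structure give $H^0(\PP^1_k,\KMW_0)\cong\GW(k)$. One way to see this: the localization sequence for $\mathbb A^1\subset\PP^1$ shows that restriction to $\mathbb A^1$ is injective on $H^0$, and $H^0(\mathbb A^1_k,\KMW_0)=\GW(k)$. Surjectivity follows from pullback along $\PP^1\to\Spec k$, since the structure map has a section.

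The second step is to exhibit two fields with non-isomorphic $\GW$ groups. For $k=\mathbb C$ we have $\GW(\mathbb C)\cong\Z$ via rank. For $k=\mathbb R$ we have $\GW(\mathbb R)\cong\Z\oplus\Z$ via rank and signature. A finite field $\F_q$ would also work, since $\GW(\F_q)$ is $\Z\oplus\Z/2$. The groups $\Z$ and $\Z^2$ are not isomorphic, so no single $\mathcal F(\Sigma)$ can match both. This gives the bidegree-$(0,0)$ failure. It is also consistent with the degree-zero computation after Theorem~\ref{thm:main}, which produces $\GW(R)$ only under the real-closed hypothesis.

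The last sentence of the statement is exactly Corollary~\ref{cor:noscalar}, applied to the $\mathcal O(1)$-sector of $\PP^1$ in degree one. I would simply cite it: the replacement yields $\W(k)/2\W(k)$, which has four elements for $\F_q$ with $q\equiv1\pmod4$, while the algebraic group is $\W(k)/I(k)\cong\Z/2$.

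The main obstacle is the interpretation of ``construction depending only on $\Sigma$''. I will formalize it as a function of $(\Sigma,\mathfrak r)$ alone, with values in graded rings up to isomorphism. I will also note that the argument uses only the additive group in one bidegree, so it is robust to any choice of product conventions.
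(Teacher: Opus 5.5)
Your proposal is correct and follows the paper's proof. It uses the same bidegree-$(0,0)$ comparison $\CHW^0(\PP^1_k)\cong\GW(k)$ with $\GW(\mathbb C)\cong\Z$ versus $\GW(\mathbb R)\cong\Z\oplus\Z$, and it cites Corollary~\ref{cor:noscalar} for the positive-degree clause. Your added justification of $\CHW^0(\PP^1_k)\cong\GW(k)$, which the paper takes as known, is fine; note only that surjectivity comes from the homotopy-invariance isomorphism $\GW(k)\to H^0(\mathbb A^1_k,\KMW_0)$ factoring through $H^0(\PP^1_k,\KMW_0)$, and the existence of a section plays no role.
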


\begin{proof}
For $\PP^1_k$, the coefficient bidegree is
$\CHW^0(\PP^1_k)=\GW(k)$.  The integral fan is unchanged by base field, whereas
$\GW(\mathbb C)\cong\Z$ and $\GW(\mathbb R)\cong\Z\oplus\Z$ as abelian groups.  Thus
the fan alone cannot determine even the coefficient group.  Corollary~\ref{cor:noscalar}
shows the stated, deliberately narrower, positive-degree assertion: merely replacing
$\Z$ by $\W(k)$ produces the wrong $\mathcal O(1)$-twisted group.  We do not rule out
a construction that also retains the fundamental-ideal filtration, orderings, and
quadratic residue maps.
\end{proof}

The two failures are logically distinct.  Degree zero forces a general-field theory to
retain the quadratic coefficient ring.  The $\mathcal O(1)$-sector then forces it to
retain how the fundamental ideal and the twist enter the differential.  In higher
dimension these are the quadratic Rost--Schmid residue maps; they are arithmetic input,
not combinatorics recoverable from the fan.

\end{document}